\newtheorem{theorem}{\bf Theorem}[section]
\newtheorem{corollary}{\bf Corollary}[section]
\newtheorem{proposition}{\bf Proposition}[section]
\newtheorem{lemma}{\bf Lemma}[section]
\newtheorem{remark}{\bf Remark}[section]
\newcommand\scaleddot{\scalebox{.89}{.}}
\renewcommand{\dddot}[1]{%
  {\mathop{\kern\z@#1}\limits^{\makebox[0pt][c]{\vbox to-2\ex@{\kern-\tw@\ex@\hbox{\normalfont\scaleddot\kern-0.5pt\scaleddot\kern-0.5pt\scaleddot}\vss}}}}}
\renewcommand{\AA}{\mathbf{A}}
\newcommand{\BB}{\mathbf{B}}
\newcommand{\CC}{\mathbf{C}}
\newcommand{\FF}{\mathbf{F}}
\newcommand{\GG}{\mathbf{G}}
\newcommand{\HH}{\mathbf{H}}
\newcommand{\NN}{\mathbf{N}}
\newcommand{\cc}{\mathbf{c}}
\newcommand{\ff}{\mathbf{f}}
\renewcommand{\gg}{\mathbf{g}}
\newcommand{\hh}{\mathbf{h}}
\newcommand{\jj}{\mathbf{j}}
\newcommand{\kk}{\mathbf{k}}
\newcommand{\uu}{\mathbf{u}}
\newcommand{\vv}{\mathbf{v}}
\newcommand{\nn}{\mathbf{n}}
\newcommand{\dzeta}{\, \mathrm{d}\zeta}
\newcommand{\udl}[1]{\underline{\smash{#1}}}
\DeclareMathOperator{\sech}{sech}
\let\div\relax
\DeclareMathOperator{\div}{div}
\DeclareMathOperator{\grad}{grad}
\DeclareMathOperator{\curl}{curl}
\renewcommand{\i}{\mathrm{i}}
\newcommand{\e}{\mathrm{e}}
\begin{document}

\title{A variational formulation for steady surface water waves on a Beltrami flow}

\author{
M. D. Groves$^{1}$ and J. Horn$^{1}$}

\address{$^{1}$ Fachrichtung Mathematik, Universit\"at des Saarlandes, Postfach 151150, 66041 Saarbr\"ucken, Germany\\}

\subject{differential equations, fluid mechanics}

\keywords{Beltrami flows, water waves, calculus of variations}

\corres{M. D. Groves\\
\email{groves@math.uni-sb.de}}

\begin{abstract}
This paper considers steady surface waves `riding' a Beltrami flow
(a three-dimensional flow with parallel velocity and vorticity fields). It is demonstrated that
the hydrodynamic problem can be formulated as two equations for two scalar functions of
the horizontal spatial coordinates, namely the elevation $\eta$ of the free surface and the potential $\Phi$
defining the gradient part (in the sense of the Hodge-Weyl decomposition) of the horizontal component
of the tangential fluid velocity there. These equations are written in terms of a nonlocal operator $H(\eta)$
mapping $\Phi$ to
the normal fluid velocity at the free surface, and are shown to arise from a variational principle.
In the irrotational limit the equations reduce to the Zakharov-Craig-Sulem formulation of the classical
three-dimensional steady water-wave problem, while $H(\eta)$ reduces to the familiar Dirichlet-Neumann operator.
\end{abstract}

\begin{fmtext}

\section{Introduction}\label{sec:introduction}

\subsection{The main results}

Consider an incompressible perfect fluid of unit density
occupying a three-dimensional domain bounded below by a rigid horizontal plane and above
by a free surface. A \emph{steady water wave} is a fluid flow of this kind in which both the velocity field and free-surface profile
are stationary with respect to a uniformly (horizontally) translating frame of reference. Working in this frame of
reference, suppose that the fluid domain is
$D_\eta=\{(x,y,z): -h < y < \eta(x,z)\}$ (so that the free surface is the graph $S_\eta$
of an unknown function $\eta$), and
the flow is a \emph{(strong) Beltrami flow} whose velocity and vorticity fields $\uu$ and $\curl \uu$ are parallel, so
that $\curl \uu = \alpha \uu$ for some fixed constant $\alpha$.
The hydrodynamic problem is to solve the equations
\end{fmtext}
\maketitle
\begin{align}
& \parbox{10cm}{$\div \uu = 0$}\mbox{in $D_\eta$,} \label{pre-Beltrami1}\\
& \parbox{10.cm}{$\curl \uu = \alpha \uu$}\mbox{in $D_\eta$,} \label{pre-Beltrami2}\\
& \parbox{10cm}{$\uu\cdot\jj = 0$}\mbox{at $y=-h$,} \label{pre-Beltrami3}\\
& \parbox{10cm}{$\uu\cdot\nn = 0$}\mbox{at $y=\eta$,} \label{pre-Beltrami4}\\
& \parbox{10cm}{$\displaystyle\tfrac{1}{2}|\uu|^2 + g \eta - \sigma
\!\left(\frac{\eta_x}{(1+|\nabla\eta|^2)^\frac{1}{2}}\right)_{\!\!\!x}-\sigma\!\left(\frac{\eta_z}{(1+|\nabla\eta|^2)^\frac{1}{2}}\right)_{\!\!\!z}=\tfrac{1}{2}|\cc|^2$}\mbox{at $y=\eta$} \label{pre-Beltrami5},
\end{align}
where $\nn$ denotes the (upward-pointing) unit normal vector at $S_\eta$, $\jj=(0,1,0)$, $\nabla=(\partial_x,\partial_z)^\mathrm{T}$ and the physical constants $g$, $\sigma$, $\cc=(c_1,c_3)^\mathrm{T}$
are respectively the acceleration due to gravity, the coefficient of surface tension and the wave velocity;
the pressure $p$ in the fluid
is recovered using the formula $p(x,y,z)=-\tfrac{1}{2}|\uu(x,y,z)|^2-gy$ (the variables $\uu$ and $p$ automatically solve
the stationary Euler equation).
 Equations \eqref{pre-Beltrami4} and \eqref{pre-Beltrami5} are referred to as respectively the \emph{kinematic} and
 \emph{dynamic} boundary conditions at the free surface.
It is natural to write $\uu$ as a perturbation of the trivial solution
\begin{equation}
\eta^\star = 0, \qquad \uu^\star=c_1\begin{pmatrix} \cos \alpha y \\0 \\ \sin \alpha y \end{pmatrix}
+c_3 \begin{pmatrix}- \sin \alpha y \\ 0 \\ \cos \alpha y \end{pmatrix}
\label{ABC flow}
\end{equation}
 of \eqref{pre-Beltrami1}--\eqref{pre-Beltrami5}, so that $\vv=\uu-\uu^\star$ satisfies the equations
 \begin{align}
& \parbox{10.5cm}{$\div \vv = 0$}\mbox{in $D_\eta$,} \label{Beltrami 1}\\
& \parbox{10.5cm}{$\curl \vv = \alpha \vv$}\mbox{in $D_\eta$,} \label{Beltrami 2}\\
& \parbox{10.5cm}{$\vv\cdot\jj = 0$}\mbox{at $y=-h$,} \label{Beltrami 3}\\
& \parbox{10.5cm}{$\vv\cdot\nn + \uu^\star \cdot \nn= 0$}\mbox{at $y=\eta$,} \label{Beltrami 4}\\
& \parbox{10.5cm}{$\displaystyle\tfrac{1}{2}|\vv|^2 + \vv \cdot \uu^\star + g \eta - \sigma
\!\left(\frac{\eta_x}{(1+|\nabla\eta|^2)^\frac{1}{2}}\right)_{\!\!\!x}-\sigma\!\left(\frac{\eta_z}{(1+|\nabla\eta|^2)^\frac{1}{2}}\right)_{\!\!\!z}=0$}\mbox{at $y=\eta$.}\label{Beltrami 5}
\end{align}
This paper considers solutions $(\eta,\vv)$ of \eqref{Beltrami 1}--\eqref{Beltrami 5} which are evanescent as
$|(x,z)| \rightarrow \infty$ and therefore represent localised waves `riding' the trivial flow \eqref{ABC flow}.
 
For $\alpha=0$ (and $\uu^\star=(c_1,0,c_3)^\mathrm{T}$)
equations \eqref{Beltrami 1}--\eqref{Beltrami 5}
reduce to the classical three-dimensional irrotational steady water-wave problem,
which is usually handled by writing $\vv=\grad \phi$, where $\phi$ is a harmonic scalar potential, so that
\eqref{Beltrami 1}, \eqref{Beltrami 2} are automatically satisfied. In fact it is possible to formulate this problem
in terms of the variables $\eta$ and $\xi=\phi|_{y=\eta}$ (Zakharov \cite{Zakharov68}, Craig \& Sulem
\cite{CraigSulem93}). Consider the
variational principle
$$\delta \mathcal{L}_0(\eta,\xi)=0,$$
where
$$\mathcal{L}_0(\eta,\xi)=\int_{D_\eta} \tfrac{1}{2}|\grad \phi|^2
+ \int_{{\mathbb R}^2}\bigg( -\eta\,\cc.\nabla (\phi|_{y=\eta})  +
\tfrac{1}{2}g\eta^2 + \sigma\big((1+|\nabla\eta|^2)^\frac{1}{2}-1\big)\bigg)$$
and $\phi$ is the unique harmonic function with $\phi_n|_{y=-h}=0$ and $\phi|_{y=\eta} = \xi$ (so that $\vv=\grad \phi$
satisfies \eqref{Beltrami 1}--\eqref{Beltrami 3});
the Euler-Lagrange equations for ${\mathcal L}_0(\eta,\xi)$ recover the boundary conditions
at the free surface (see Luke \cite{Luke67}). In the Zakharov-Craig-Sulem formulation a
\emph{Dirichlet-Neumann operator} $G(\eta)$ defined by $G(\eta)\xi=\grad \phi|_{y=\eta}\cdot \NN$ is introduced, where
$\NN=(-\eta_x,1,-\eta_z)^\mathrm{T}$ (so that $\nn=\NN/|\NN|$).  One finds that\pagebreak
$$\mathcal{L}_0(\eta,\xi) = \int_{{\mathbb R}^2} \bigg(\tfrac{1}{2}\xi G(\eta)\xi -\eta\, \cc.\nabla \xi
+ \tfrac{1}{2}g\eta^2 + \sigma\big((1+|\nabla\eta|^2)^\frac{1}{2}-1\big)\bigg)$$
and that its Euler-Lagrange equations can be written as
\begin{align*}
& G(\eta)\xi +\cc.\nabla \eta=0, \\
& \tfrac{1}{2}|\nabla\xi|^2 -\frac{(G(\eta)\xi+\cc.\nabla \eta+\nabla\eta\cdot\nabla\xi)^2}{2(1+|\nabla\eta|^2)} \\
& \qquad\mbox{}
-\cc.\nabla \xi
+g\eta- \sigma\!\left(\frac{\eta_x}{(1+|\nabla\eta|^2)^\frac{1}{2}}\right)_{\!\!\!x}-\sigma\!\left(\frac{\eta_z}{(1+|\nabla\eta|^2)^\frac{1}{2}}\right)_{\!\!\!z}= 0,
\end{align*}
which are readily confirmed to be equivalent to the boundary conditions at the free surface (with $\vv=\grad\phi$).
 
This paper presents a generalisation of the Zakharov-Craig-Sulem formulation to the case $\alpha \neq 0$.
The velocity field $\vv$ is represented by a solenoidal vector potential $\AA$ with $\curl \curl \AA = \alpha\,\curl \AA$
and $\AA \wedge \jj|_{y=-h}=0$, so that $\vv=\curl \AA$
automatically satisfies \eqref{Beltrami 1}--\eqref{Beltrami 3}; note that $\uu^\star=\curl \AA^{\!\star}$, where
$$\AA^{\!\star} = \frac{c_1}{\alpha} \begin{pmatrix} \cos \alpha y-1 \\ 0 \\ \sin \alpha y \end{pmatrix}
+\frac{c_3}{\alpha}\begin{pmatrix} -\sin \alpha y \\0 \\ \cos \alpha y-1 \end{pmatrix}.$$
Let $\FF_\parallel$ denote the horizontal component of the tangential part of
a vector field $\FF$ at the free surface, so that $\FF_\parallel = \FF_\mathrm{h}+F_2\nabla \eta\big|_{y=\eta}$,
where $\FF_\mathrm{h}=(F_1,F_3)^\mathrm{T}$,
and write, according to the Hodge-Weyl decomposition for vector fields in two-dimensional free space (see below),
\begin{equation}
\vv_\parallel = \nabla \Phi + \nabla^\perp \Psi, \label{HW decomposition of vv}
\end{equation}
where $\Phi = \Delta^{-1} (\nabla \cdot \vv_\parallel)$, $\Psi=\Delta^{-1}(\nabla^\perp\cdot\vv_\parallel)
=-\Delta^{-1}(\nabla \cdot \vv_\parallel^\perp)$ and $\Delta^{-1}$ is the two-dimensional Newtonian potential.
In Section \ref{VP} it is shown that the hydrodynamic problem
can be formulated in terms of the variables $\eta$ and $\Phi$. The Euler-Lagrange equations for the
variational principle
\begin{equation}
\delta \mathcal{L}(\eta,\Phi)=0, \label{Main VP}
\end{equation}
where
\begin{align}
\mathcal{L}(\eta,\Phi)&=\int_{D_\eta}\!\! \bigg(\tfrac{1}{2} |\curl \AA|^2 - \tfrac{1}{2}\alpha\AA\cdot\curl \AA\bigg)
+\int_{{\mathbb R}^2} \!\!\bigg(\!\!- \tfrac{1}{2}\alpha \nabla \Delta^{-1}(\nabla\cdot\AA^{\!\perp}_\parallel)\cdot\AA_\parallel -\nabla \Phi\cdot\AA^{\!\star\!\perp}_\parallel\bigg)\nonumber \\
& \qquad\mbox{}  +\int_{{\mathbb R}^2}\!\!\bigg(
\Gamma(\eta)+ \tfrac{1}{2}g\eta^2+ \sigma\big((1+|\nabla\eta|^2)^\frac{1}{2}-1\big)\bigg),
\label{Defn of L} \\
\Gamma(\eta) & = -\tfrac{1}{2}\alpha\nabla\Delta^{-1}(\nabla\cdot\AA^{\!\star\!\perp}_\parallel)\cdot\AA^{\!\star}_\parallel
+ \frac{|\cc|^2}{2\alpha}(\sin \alpha \eta - \alpha \eta)\nonumber
\end{align}
and $\AA$ is the unique solution of the boundary-value problem
\begin{align}
& \parbox{7.25cm}{$\curl \curl \AA = \alpha\, \curl\AA$}\mbox{in $D_\eta$,} \label{A BVP 1}\\
& \parbox{7.25cm}{$\div \AA = 0$}\mbox{in $D_\eta$,} \label{A BVP 2}\\
& \parbox{7.25cm}{$\AA \wedge \jj = \mathbf{0}$}\mbox{at $y=-h$,} \label{A BVP 3}\\
& \parbox{7.25cm}{$\AA\cdot\nn = 0$}\mbox{at $y=\eta$,} \label{A BVP 4}\\
& \parbox{7.25cm}{$(\curl \AA)_\parallel = \nabla \Phi - \alpha \nabla^\perp \Delta^{-1} (\nabla\cdot\AA^{\!\perp}_\parallel)
$}\mbox{at $y=\eta$}, \label{A BVP 5}
\end{align}
recover the boundary conditions at the free surface (with $\vv=\curl \AA$);
the existence and uniqueness of the solution to the above boundary-value problem for small values of $|\alpha|$
is demonstrated by functional-analytic methods in Sections \ref{FA}\ref{FA for BVP} and \ref{FA}\ref{FA Analyticity of GDNO}. (Observe that\pagebreak
$$\Psi = -\Delta^{-1}(\nabla\cdot\vv^{\perp}_\parallel)=-\Delta^{-1}(\curl\vv\cdot\NN\big|_{y=\eta})
=-\alpha \Delta^{-1}( \vv\cdot\NN\big|_{y=\eta})=- \alpha \Delta^{-1} (\nabla\cdot\AA^{\!\perp}_\parallel),$$
in which the vector identity $\curl \FF\cdot\NN\big|_{y=\eta}=\nabla \cdot \FF^\perp_\parallel$ has been used,
so that $\Psi$ is determined by \eqref{A BVP 1}--\eqref{A BVP 4} and \eqref{A BVP 5} is
equivalent to \eqref{HW decomposition of vv}.) The significance of the quantity $\vv_\parallel$ has
previously been noted by Gavrilyuk \emph{et al.} \cite[\S 3.1]{GavrilyukKalischKhorsand15} (a study of kinematic balance laws) and Castro \& Lannes \cite{CastroLannes15}
(Hamiltonian formulations of water waves with general distributions of vorticity).

The appropriate generalisation $H(\eta)$ of the Dirichlet-Neumann operator $G(\eta)$ is identified in Section \ref{GDNO}:
one defines $H(\eta)\Phi = \curl \AA\cdot\NN|_{y=\eta}$. It is shown that
$$\mathcal{L}(\eta,\Phi) = \int_{{\mathbb R}^2} \bigg(\tfrac{1}{2}\Phi H(\eta)\Phi 
-\nabla \Phi\cdot\AA^{\!\star\!\perp}_\parallel
+\Gamma(\eta)+ \tfrac{1}{2}g\eta^2 + \sigma\big((1+|\nabla\eta|^2)^\frac{1}{2}-1\big)\bigg)$$
and that its Euler-Lagrange equations can be written as
\begin{align*}
& H(\eta)\Phi +\uu^\star\cdot\NN\big|_{y=\eta} =0, \\
& \tfrac{1}{2}|K(\eta)\Phi|^2 -\frac{(H(\eta)\Phi+K(\eta)\Phi\cdot\nabla\eta)^2}{2(1+|\nabla\eta|^2)}
-\alpha\frac{H(\eta)\Phi (H(\eta)\Phi+K(\eta)\Phi\cdot\nabla\eta)}{1+|\nabla\eta|^2} \\
& \qquad\mbox{}+K(\eta)\Phi\cdot\udl{\uu}^\star_\mathrm{h}\big|_{y=\eta}
+g\eta- \sigma\!\left(\frac{\eta_x}{(1+|\nabla\eta|^2)^\frac{1}{2}}\right)_{\!\!\!x}
-\sigma\!\left(\frac{\eta_z}{(1+|\nabla\eta|^2)^\frac{1}{2}}\right)_{\!\!\!z}= 0,
\end{align*}
where
$$K(\eta)\Phi=\nabla \Phi - \alpha \nabla^\perp \Delta^{-1} (H(\eta)\Phi).$$
(In the irrotational limit $\alpha=0$ one finds that $\curl \AA=\grad \phi$, where $\phi$ is the unique harmonic function
with $\phi_n|_{y=-h}=0$ and $\phi|_{y=\eta} = \Phi$, so that $H(\eta)\Phi = \grad \phi.\NN|_{y=\eta} = G(\eta)\Phi$,
thus recovering the Craig-Sulem-Zakharov formulation.)

The treatment of the variational principle \eqref{VP} in Section \ref{VP} consists in
computing the formal first variation $\delta {\mathcal L}(\eta,\Phi)$ of the variational functional
in terms of the infinitesimal variations $\dot{\eta}$, $\dot{\Phi}$; all
variables are supposed to be as smooth as required for the relevant calculations. The mathematics
can be made rigorous by `flattening' the variable fluid domain $D_\eta$, that is mapping it to the fixed
reference domain $D_0$ by introducing the new vertical coordinate $\tilde{y}=h(y-\eta)/(y+\eta)$ and
variable $\tilde{\AA}(x,\tilde{y},z)=\AA(x,y,z)$. The variational functional is transformed into
\begin{align*}
\mathcal{L}&(\eta,\Phi)=\int_{D_0}\!\! \bigg(\tfrac{1}{2} |\curl^{\eta} \tilde{\AA}|^2 - \tfrac{1}{2}\alpha\tilde{\AA}\cdot\curl^{\eta} \tilde{\AA}\bigg)\!\!\bigg(1+\frac{\eta}{h}\bigg)\\
&+\int_{{\mathbb R}^2} \!\!\bigg(\!- \tfrac{1}{2}\alpha \nabla \Delta^{-1}(\nabla\cdot\tilde{\AA}^{\!\perp}_\parallel)\cdot\tilde{\AA}_\parallel -\nabla \Phi\cdot\AA^{\!\star\!\perp}_\parallel
+\Gamma(\eta)+ \tfrac{1}{2}g\eta^2+ \sigma\big((1+|\nabla\eta|^2)^\frac{1}{2}-1\big)\bigg),
\end{align*}
in which $\tilde{\AA}$ is the solution of the `flattened' boundary-value problem 
\begin{align}
& \parbox{7.25cm}{$\curl^{\eta} \curl^{\eta} {\tilde{\AA}} = \alpha\, \curl^{\eta}{\tilde{\AA}}$}\mbox{in $D_0$,} \label{Intro flat 1} \\
& \parbox{7.25cm}{$\div^{\eta} {\tilde{\AA}} = 0$}\mbox{in $D_0$,} \label{Intro flat 2} \\
& \parbox{7.25cm}{${\tilde{\AA}} \wedge \jj = \mathbf{0}$}\mbox{at $\tilde{y}=-h$,} \label{Intro flat 3} \\
& \parbox{7.25cm}{${\tilde{\AA}}\cdot\NN = 0$}\mbox{at $\tilde{y}=0$,} \label{Intro flat 4} \\
& \parbox{7.25cm}{$(\curl^{\eta} {\tilde{\AA}})_\parallel = \nabla \Phi  - \alpha \nabla^\perp \Delta^{-1} (\nabla\cdot\tilde{\AA}^{\!\perp}_\parallel)
$}\mbox{at $\tilde{y}=0$} \label{Intro flat 5}
\end{align}
and the notation $\tilde{\FF}_\parallel = \tilde{\FF}_\mathrm{h}+\tilde{F}_2\nabla \eta\big|_{\tilde{y}=0}$ for vector fields
$\tilde{\FF}: D_0 \rightarrow {\mathbb R}^3$ is used; explicit expressions for
$\curl^{\eta} \tilde{\AA} := \curl \AA$, $\curl^{\eta}\curl^{\eta}\tilde{\AA}:=\curl\curl\AA$ and 
$\div^{\eta} \tilde{\AA} := \div \AA$
are given below.
This technique is used in Section \ref{FA}\ref{FA Analyticity of GDNO}, where it is shown that the nonlocal operator $H(\eta)$ depends analytically
upon $\eta$ in a sense made precise there.

The variational principle presented here is a combination of a classical result for
Beltrami flows in fixed domains (see Woltjer \cite{Woltjer58} and Laurence \& Avellaneda \cite{LaurenceAvellaneda91}) and
a suggestion for an alternative variational framework for three-dimensional irrotational
water waves by Benjamin \cite[\S 6.6]{Benjamin84}. An alternative variational principle has been
given by Lokharu \& Wahl\'{e}n \cite{LokharuWahlen19}, who use a vector potential $\AA$ within the flow as the principal
variable and consider more general parametrisations of the free surface; in the present context
their work shows that equations \eqref{Beltrami 1}--\eqref{Beltrami 5}
(with $\vv=\curl \AA$) follow from the variational principle
$$\delta\Bigg\{\!\!\int_{D_\eta}\!\!\! \bigg(\tfrac{1}{2} |\curl \AA|^2 - \tfrac{1}{2}\alpha\AA\cdot\curl \AA\bigg)
\!\!-\!\int_{{\mathbb R}^2}\!\!\!\bigg(
\frac{|\cc|^2}{2\alpha}(\sin \alpha \eta - \alpha \eta) + \tfrac{1}{2}g\eta^2 + \sigma\big((1+|\nabla\eta|^2)^\frac{1}{2}-1\big)\!\bigg)\!\!\Bigg\}\!=\!0,
$$
where the variations are taken with respect to $\eta$ and $\AA$ satisfying $\div \AA=0$,
$\AA \wedge \jj|_{y=-h}=\mathbf{0}$ and $\AA \wedge \nn|_{y=\eta}=-\AA^\star \wedge \nn|_{y=\eta}$.

\subsection{Notation and vector identities}

In this article vector fields $D_\eta \rightarrow {\mathbb R}^3$ and
${\mathbb R}^2 \rightarrow {\mathbb R}^2$ are written in respectively bold upper and lower case,
the horizontal component of $\FF=(F_1,F_2,F_3)^\mathrm{T}$ is denoted by $\FF_\mathrm{h}=(F_1,F_3)^\mathrm{T}$,
and the vector perpendicular to $\ff=(f_1,f_3)$ is denoted by
$\ff^\perp=(-f_3,f_1)$.
Evaluation at the free surface is indicated by an underscore, so that $\udl{\FF}=(F_1,F_2,F_3)^\mathrm{T}|_{y=\eta}$,
and frequent use is made of the quantity $\FF_\parallel = \udl{\FF}_\mathrm{h}+\udl{F}_2\nabla \eta$
(the horizontal component of the tangential part of $\FF$ at the free surface). The usual
three-dimensional vector operators are denoted by `$\grad$', `$\div$' and `$\curl$', while $\nabla=(\partial_x,\partial_z)^\mathrm{T}$;
the two- and three-dimensional Laplacians are both denoted by $\Delta$ (the precise meaning being clear from the context).

In Sections \ref{VP} and \ref{GDNO} we proceed formally, assuming that all functions are 
as regular as required for the relevant calculations and making frequent use of the following identities
(which are proved by explicit computation).

\begin{proposition}
The identities
\begin{itemize}
\item[(i)]
$\ff^\perp\cdot\gg^\perp = \ff\cdot\gg$, $\ff\cdot\gg^\perp = -\ff^\perp\cdot\gg$, $\ff^{\perp\perp}=-\ff$,
\item[(ii)]
$(\nabla f)^\perp = \nabla^\perp f$, $\nabla^{\perp\perp} f = - \nabla f$,
\item[(iii)]
$\ff^\perp\cdot\nabla^\perp g = \ff\cdot\nabla g$, $\ff\cdot\nabla^\perp g = - \ff^\perp\cdot\nabla g$, $\nabla\cdot\ff^\perp = -\nabla^\perp\cdot\ff$,
\item[(iv)]
$(\udl{\FF}\wedge\NN)\cdot\udl{\GG}=\FF_\parallel^\perp\cdot\GG_\parallel$,
$\udl{\curl \FF}\cdot\NN=\nabla\cdot\FF_\parallel^\perp$,
\item[(v)]
$(\grad f)_\parallel = \nabla \udl{f}$,
$(\FF_{\!\!y})_\parallel = \nabla \udl{\FF}_2+(\udl{\curl \FF})_\mathrm{h}^\perp$,
$(\udl{\curl \FF})_\mathrm{h}\cdot\FF_\parallel - \udl{\FF}\cdot\udl{\curl \FF}
= - \nabla\cdot \FF^{\!\perp}_\parallel\, \udl{\FF}_2$,
\item[(vii)]
$\displaystyle\int_{{\mathbb R}^2} \nabla f\cdot \nabla^\perp g=0, \ 
\int_{{\mathbb R}^2} \nabla f\cdot \GG = -\int_{{\mathbb R}^2} f \nabla\cdot \GG, \ 
\int_{{\mathbb R}^2} \nabla^\perp f\cdot \GG = -\int_{{\mathbb R}^2} f \nabla^\perp\cdot \GG$
\end{itemize}
are satisfied by all sufficiently regular vector fields $\FF$, $\GG:D_\eta \rightarrow {\mathbb R}^3$,
$\ff$, $\gg: {\mathbb R}^2 \rightarrow {\mathbb R}^2$
and scalar fields $f, g: {\mathbb R} \rightarrow {\mathbb R}$.
\end{proposition}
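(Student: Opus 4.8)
The plan is to verify each identity by direct computation in coordinates, using $\ff^\perp=(-f_3,f_1)^\mathrm{T}$, $\nabla^\perp=(-\partial_z,\partial_x)^\mathrm{T}$, $\NN=(-\eta_x,1,-\eta_z)^\mathrm{T}$ and $\FF_\parallel=\udl{\FF}_\mathrm{h}+\udl{F}_2\nabla\eta$, supplemented by the chain rule for traces on $S_\eta$ — so that $\partial_x\udl{F}_j=\udl{\partial_xF_j}+\udl{\partial_yF_j}\,\eta_x$, and likewise with $z$ — and, for (vii), by integration by parts over ${\mathbb R}^2$ (using the decay of the fields at infinity). I would first dispatch (i) and (ii), each of which is a one-line expansion: for instance $\ff^\perp\cdot\gg^\perp=f_3g_3+f_1g_1=\ff\cdot\gg$ and $(\nabla f)^\perp=(f_x,f_z)^\perp=(-f_z,f_x)=\nabla^\perp f$. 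The three relations in (iii) then follow by substituting $\gg=\nabla g$ (respectively $\ff=\nabla f$) into (i) and invoking (ii), the last of them, $\nabla\cdot\ff^\perp=-\nabla^\perp\cdot\ff$, again being an immediate component check.

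The heart of the matter is (iv). For the first identity I would expand $\udl{\FF}\wedge\NN$, dot with $\udl{\GG}$, and recognise the resulting six-term expression as $\FF_\parallel^\perp\cdot\GG_\parallel$ once $\FF_\parallel=(\udl{F}_1+\udl{F}_2\eta_x,\,\udl{F}_3+\udl{F}_2\eta_z)^\mathrm{T}$ is written out. The second identity, $\udl{\curl\FF}\cdot\NN=\nabla\cdot\FF^{\!\perp}_\parallel$, is the one place where the chain rule genuinely intervenes, and is where I expect essentially all of the bookkeeping to lie: on the right-hand side one differentiates the traces inside $\FF_\parallel^\perp=(-\udl{F}_3-\udl{F}_2\eta_z,\,\udl{F}_1+\udl{F}_2\eta_x)^\mathrm{T}$, and the verification hinges on the mixed second derivatives $\udl{F}_2\eta_{xz}$, $\udl{F}_2\eta_{zx}$ cancelling, the $\eta_x\eta_z$ cross-terms coming from $\udl{\partial_yF_2}$ cancelling, and the surviving chain-rule contributions $\udl{\partial_yF_j}\,\eta_x$, $\udl{\partial_yF_j}\,\eta_z$ lining up precisely with the $\eta_x$- and $\eta_z$-weighted terms of $\udl{\curl\FF}\cdot\NN$. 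This single computation is the only step I expect to require real care.

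Everything else reduces to these. In (v), $(\grad f)_\parallel=(\udl{f_x}+\udl{f_y}\eta_x,\,\udl{f_z}+\udl{f_y}\eta_z)^\mathrm{T}=\nabla\udl{f}$ is immediate from the chain rule, and the formula for $(\FF_y)_\parallel$ follows the same way after $(\udl{\curl\FF})_\mathrm{h}^\perp$ is written out; the third relation is obtained by subtracting $\udl{\FF}\cdot\udl{\curl\FF}$ from $(\udl{\curl\FF})_\mathrm{h}\cdot\FF_\parallel$, whereupon every term not carrying a factor $\udl{F}_2$ cancels and the surviving coefficient of $\udl{F}_2$ is $-\udl{\curl\FF}\cdot\NN$, which equals $-\nabla\cdot\FF^{\!\perp}_\parallel$ by (iv). Finally, in (vii) the first integral equals $\int_{{\mathbb R}^2}(f_zg_x-f_xg_z)$ and vanishes after integration by parts (the mixed second derivatives of $g$ cancelling), the second is the divergence theorem applied to $f\GG$, and the third reduces to it via $\nabla^\perp f\cdot\GG=-\nabla f\cdot\GG^\perp$ together with $\nabla\cdot\GG^\perp=-\nabla^\perp\cdot\GG$ from (i)–(iii).
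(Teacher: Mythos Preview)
Your proposal is correct and follows precisely the approach indicated in the paper, which merely states that these identities ``are proved by explicit computation'' without further detail. Your coordinate verifications, use of the chain rule for traces on $S_\eta$, and integration-by-parts arguments are exactly the explicit computations the paper has in mind.
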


Each (sufficiently regular) vector field $\ff: {\mathbb R}^2 \rightarrow {\mathbb R}^2$ admits a unique orthogonal decomposition
$$
\ff = \nabla \Phi + \nabla^\perp \Psi,
$$
where $\Phi=\Delta^{-1} (\nabla\cdot \ff)$ and $\Psi=\Delta^{-1} (\nabla^\perp \cdot \ff)=-\Delta^{-1}(\nabla \cdot \ff^\perp)$.
Note that the projections $\ff \mapsto \nabla \Delta^{-1}(\nabla\cdot \ff)$,
$\ff \mapsto \nabla^\perp \Delta^{-1}(\nabla^\perp \cdot \ff)$ onto the `gradient part' and `orthogonal gradient part'
in the decomposition
$$\ff=\nabla \Delta^{-1}(\nabla\cdot \ff) + \nabla^\perp \Delta^{-1}(\nabla^\perp\cdot \ff)$$
are formally self-adjoint and have the property recorded in the following proposition.
\begin{proposition} \label{Inverse Laplacian proposition}
The identity 
$$\int_{{\mathbb R}^2} \nabla \Delta^{-1} (\nabla\cdot\gg^\perp)\cdot\ff - \int_{{\mathbb R}^2} \nabla \Delta^{-1} (\nabla\cdot\ff^\perp)\cdot\gg = \int_{{\mathbb R}^2} \ff\cdot\gg^\perp$$
holds for all sufficiently regular vector fields $\ff$, $\gg: {\mathbb R}^2 \rightarrow {\mathbb R}^2$.
\end{proposition}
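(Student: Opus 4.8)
The plan is to reduce both sides of the asserted identity to one and the same combination of mixed Dirichlet integrals, using the Hodge-Weyl decomposition together with the vector identities of the preceding proposition. Write
$$\ff = \nabla\Phi_1 + \nabla^\perp\Psi_1, \qquad \gg = \nabla\Phi_2 + \nabla^\perp\Psi_2,$$
where $\Phi_1 = \Delta^{-1}(\nabla\cdot\ff)$, $\Psi_1 = \Delta^{-1}(\nabla^\perp\cdot\ff) = -\Delta^{-1}(\nabla\cdot\ff^\perp)$ and $\Phi_2$, $\Psi_2$ are the corresponding potentials for $\gg$.

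First I would rewrite the left-hand side. By the very definition of $\Psi_1$ and $\Psi_2$ one has $\nabla\Delta^{-1}(\nabla\cdot\gg^\perp) = -\nabla\Psi_2$ and $\nabla\Delta^{-1}(\nabla\cdot\ff^\perp) = -\nabla\Psi_1$, so the left-hand side equals $\int_{\mathbb{R}^2}\big(\nabla\Psi_1\cdot\gg - \nabla\Psi_2\cdot\ff\big)$. Substituting the decompositions of $\ff$ and $\gg$ and discarding the two terms $\int_{\mathbb{R}^2}\nabla\Psi_1\cdot\nabla^\perp\Psi_2$, $\int_{\mathbb{R}^2}\nabla\Psi_2\cdot\nabla^\perp\Psi_1$, which vanish by identity (vii), yields
$$\int_{\mathbb{R}^2} \nabla\Delta^{-1}(\nabla\cdot\gg^\perp)\cdot\ff - \int_{\mathbb{R}^2} \nabla\Delta^{-1}(\nabla\cdot\ff^\perp)\cdot\gg = \int_{\mathbb{R}^2}\nabla\Psi_1\cdot\nabla\Phi_2 - \int_{\mathbb{R}^2}\nabla\Psi_2\cdot\nabla\Phi_1.$$

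Next I would treat the right-hand side in the same fashion. Identity (ii) gives $\gg^\perp = \nabla^\perp\Phi_2 - \nabla\Psi_2$, whence
$$\int_{\mathbb{R}^2}\ff\cdot\gg^\perp = \int_{\mathbb{R}^2}\big(\nabla\Phi_1 + \nabla^\perp\Psi_1\big)\cdot\big(\nabla^\perp\Phi_2 - \nabla\Psi_2\big).$$
Of the four resulting integrals, $\int_{\mathbb{R}^2}\nabla\Phi_1\cdot\nabla^\perp\Phi_2$ and $\int_{\mathbb{R}^2}\nabla^\perp\Psi_1\cdot\nabla\Psi_2$ vanish by identity (vii), while $\int_{\mathbb{R}^2}\nabla^\perp\Psi_1\cdot\nabla^\perp\Phi_2 = \int_{\mathbb{R}^2}\nabla\Psi_1\cdot\nabla\Phi_2$ by identities (i) and (ii); hence $\int_{\mathbb{R}^2}\ff\cdot\gg^\perp = \int_{\mathbb{R}^2}\nabla\Psi_1\cdot\nabla\Phi_2 - \int_{\mathbb{R}^2}\nabla\Phi_1\cdot\nabla\Psi_2$, which coincides with the expression just found for the left-hand side, the two integrands $\nabla\Phi_1\cdot\nabla\Psi_2$ and $\nabla\Psi_2\cdot\nabla\Phi_1$ being equal.

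The computation is elementary, so there is no real obstacle; the points that do require care are the correct bookkeeping of the signs introduced by the $\perp$-operations (in particular when invoking the definition $\Psi = -\Delta^{-1}(\nabla\cdot\ff^\perp)$) and the observation that every ``mismatched'' cross term --- a pairing of a gradient with an orthogonal gradient --- integrates to zero by identity (vii), so that both sides indeed collapse to the single antisymmetric combination $\int_{\mathbb{R}^2}\nabla\Psi_1\cdot\nabla\Phi_2 - \int_{\mathbb{R}^2}\nabla\Psi_2\cdot\nabla\Phi_1$.
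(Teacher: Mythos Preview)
Your argument is correct. The paper does not spell out a proof of this proposition; it merely observes, in the sentence preceding it, that the projections $\ff\mapsto\nabla\Delta^{-1}(\nabla\cdot\ff)$ and $\ff\mapsto\nabla^\perp\Delta^{-1}(\nabla^\perp\cdot\ff)$ onto the gradient and orthogonal-gradient parts of the Hodge--Weyl decomposition are formally self-adjoint and ``have the property recorded in the following proposition''. Your computation is precisely the explicit verification the paper leaves to the reader: decomposing $\ff$ and $\gg$, using the orthogonality $\int\nabla f\cdot\nabla^\perp g=0$ from identity~(vii) to kill the cross terms, and tracking the signs from $\Psi=-\Delta^{-1}(\nabla\cdot\ff^\perp)$ carefully. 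The only alternative packaging would be to phrase everything in terms of the projections $P_1$, $P_2$ and the observation that $(P_1\gg)^\perp=P_2(\gg^\perp)$, which collapses the left-hand side to $\int\gg^\perp\cdot(P_1\ff+P_2\ff)=\int\gg^\perp\cdot\ff$ in one line; but this is the same argument in different notation, and your version has the merit of being completely explicit.
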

A rigorous discussion of the Hodge-Weyl decomposition (which is used formally in 
Sections \ref{VP} and \ref{GDNO}) is given in Section \ref{FA}\ref{FA prerequisites}.

It remains to record the expressions $\curl^{\eta}\tilde{\FF}$, $\div^{\eta}\tilde{\FF}$, $\grad^\eta f$ and $\Delta^\eta f$
obtained from $\curl \FF$, $\div \FF$, $\grad \tilde{f}$ and $\Delta \tilde{f}$ by the `flattening' change of variables $\tilde{y}=h(y-\eta)/(h+\eta)$,
$\tilde{\FF}(x,\tilde{y},z)=\FF(x,y,z)$, $\tilde{f}(x,\tilde{y},z)=f(x,y,z)$; one finds that
\begin{align*}
\curl^{\eta} \tilde{\FF} &=  \curl \tilde{\FF}  -K_2^\eta(\tilde{F}_{3\tilde{y}},0,-\tilde{F}_{1\tilde{y}})+K_1^\eta(\eta_z\tilde{F}_{2\tilde{y}},\eta_x\tilde{F}_{3\tilde{y}}-\eta_{z}\tilde{F}_{1\tilde{y}},-\eta_x\tilde{F}_{2\tilde{y}}), \\
\div^{\eta} \tilde{\FF} & = \div \tilde{\FF} -K_1^\eta(\eta_x \tilde{F}_{1\tilde{y}}+\eta_z \tilde{F}_{3\tilde{y}})
-K_2^\eta \tilde{F}_{2\tilde{y}}, \\
\grad^\eta \tilde{f} & = \grad \tilde{f} - K_1^\eta \tilde{f}_y(\eta_x,0,\eta_z)-K_2^\eta \tilde{f}_y(0,1,0), \\
\Delta^\eta \tilde{f} & = \Delta \tilde{f}+K_1^\eta(\eta_x^2+\eta_z^2)(K_1^\eta\tilde{f}_{yy}+2K_3^\eta\tilde{f}_y) \\
 & \qquad\mbox{}  +K_2^\eta(K_2^\eta-2)\tilde{f}_{yy} - K_1^\eta(\eta_{xx}+\eta_{zz})\tilde{f}_y
 - 2K_1^\eta(\eta_x \tilde{f}_{xy} + \eta_z \tilde{f}_{yz}),
 \end{align*}
where $K_1^\eta=(h+\tilde{y})/(h+\eta)$, $K_2^\eta = \eta/(h+\eta)$, $K_3^\eta=1/(h+\eta)$.

\section{The variational principle} \label{VP}
In this section we verify that equations \eqref{Beltrami 1}--\eqref{Beltrami 5} (with $\vv=\curl \AA$) follow from the variational principle
\begin{align*}
&\delta\Bigg\{\int_{D_\eta}\!\! \bigg(\tfrac{1}{2} |\curl \AA|^2 - \tfrac{1}{2}\alpha\AA\cdot\curl \AA\bigg)
+\int_{{\mathbb R}^2} \!\!\bigg(\!\!- \tfrac{1}{2}\alpha \nabla \Delta^{-1}(\nabla\cdot\AA^{\!\perp}_\parallel)\cdot\AA_\parallel -\nabla \Phi\cdot\AA^{\!\star\!\perp}_\parallel\bigg) \\
& \qquad\mbox{}  +\int_{{\mathbb R}^2}\!\!\bigg(
\Gamma(\eta) + \tfrac{1}{2}g\eta^2 + \sigma\big((1+|\nabla\eta|^2)^\frac{1}{2}-1\big)\bigg)\Bigg\}=0,
\end{align*}
where
$$\Gamma(\eta) = -\tfrac{1}{2}\alpha\nabla\Delta^{-1}(\nabla\cdot\AA^{\!\star\!\perp}_\parallel)\cdot\AA^{\!\star}_\parallel+ \frac{|\cc|^2}{2\alpha}(\sin \alpha \eta - \alpha \eta),$$
$\AA$ is the unique solution of the boundary-value problem \eqref{A BVP 1}--\eqref{A BVP 5} and the
variations are taken with respect to $\eta$ and $\Phi$; equations
\eqref{Beltrami 1}--\eqref{Beltrami 3} are automatically satisfied, while \eqref{Beltrami 4} and \eqref{Beltrami 5}
are recovered from the Euler-Lagrange equations for the variational functional ${\mathcal L}(\eta,\Phi)$.
To this end note that the rules
$$\delta \udl{\FF} = \udl{\dot{\mathbf F}}+\udl{\FF_{\!\!y}} \dot{\eta}, \qquad
\delta \FF_\parallel=\dot{\FF}_\parallel
+(\FF_{\!\!y})_\parallel \dot{\eta}+\nabla\dot{\eta}\udl{\FF}_2,$$
where, as is customary, $\delta \FF$ is abbreviated to $\dot{\FF}$, imply that
\begin{align*}
&\curl \curl \dot{\AA}=\alpha\, \curl \dot{\AA} \qquad \mbox{in $D_\eta$}, \\
& \dot{\AA} \wedge \jj|_{y=-h} = \mathbf{0}, \\
&(\curl \dot{\AA})_\parallel
=-(\curl \AA_y)_\parallel \dot{\eta}-\nabla\dot{\eta}(\udl{\curl \AA})_2+\nabla \dot{\Phi} - \alpha \nabla^\perp \Delta^{-1} \big(\nabla\cdot(\delta\AA_\parallel)^\perp\big).
\end{align*}

Observe that
\begin{eqnarray}
\lefteqn{\delta \int_{D_\eta} \left(\tfrac{1}{2}|\curl \AA|^2-\tfrac{1}{2}\alpha \AA\cdot\curl \AA\right)} \nonumber \\
& = & \int_{D_\eta} \left(\curl \AA\cdot\curl\dot{\AA} -\tfrac{1}{2}\alpha \dot{\AA}\cdot\curl \AA - \tfrac{1}{2}\alpha \AA\cdot\curl \dot{\AA}\right) \nonumber \\
& & \qquad \mbox{}
+ \int_{{\mathbb R}^2} \left (\tfrac{1}{2}|\udl{\curl \AA}|^2 -\tfrac{1}{2}\alpha \AA\cdot\udl{\curl \AA}\right)\dot{\eta} \nonumber \\
& = & \int_{D_\eta} \left(\curl \curl \dot{\AA}-\alpha\, \curl \dot{\AA}\right)\cdot\AA
+ \int_{{\mathbb R}^2} (\udl{\curl \dot{\AA}}\wedge\NN)\cdot\udl{\AA} - \tfrac{1}{2}\alpha \int_{{\mathbb R}^2} (\udl{\dot{\AA}}\wedge \NN)\cdot\udl{\AA}\nonumber \\
& & \qquad \mbox{}+ \int_{{\mathbb R}^2} \left (\tfrac{1}{2}|\udl{\curl \AA}|^2 -\tfrac{1}{2}\alpha \udl{\AA}\cdot\udl{\curl \AA}\right)\dot{\eta} \nonumber \\
& = & \int_{{\mathbb R}^2} (\curl \dot{\AA})_\parallel^\perp\cdot \AA_\parallel
- \tfrac{1}{2}\alpha \int_{{\mathbb R}^2} \dot{\AA}^{\!\perp}_\parallel\cdot \AA_\parallel
+ \int_{{\mathbb R}^2} \left (\tfrac{1}{2}|\udl{\curl \AA}|^2 -\tfrac{1}{2}\alpha \udl{\AA}\cdot\udl{\curl \AA}\right)\dot{\eta} \label{Variation 1}
\end{eqnarray}
and
\begin{align}
\int_{{\mathbb R}^2} &(\curl \dot{\AA})_\parallel^\perp\cdot \AA_\parallel \nonumber \\
& = - \int_{{\mathbb R}^2} (\curl \dot{\AA})_\parallel\cdot \AA^{\!\perp}_\parallel \nonumber \\
& = \int_{{\mathbb R}^2} \big((\curl \AA_y)_\parallel \dot{\eta}+\nabla\dot{\eta}(\udl{\curl \AA})_2\big)\cdot\AA^{\!\perp}_\parallel
-\int_{{\mathbb R}^2} \nabla \dot{\Phi}\cdot\AA^{\!\perp}_\parallel \nonumber \\
& \hspace{0.5in}\mbox{}
+ \alpha \int_{{\mathbb R}^2} \nabla^\perp \Delta^{-1}(\nabla\cdot(\delta \AA_\parallel)^\perp)\cdot\AA^{\!\perp}_\parallel \nonumber \\
& = \int_{{\mathbb R}^2} \big((\curl \AA_y)_\parallel -\nabla(\udl{\curl \AA})_2\big)\cdot\dot{\eta}\AA^{\!\perp}_\parallel
-\int_{{\mathbb R}^2} (\udl{\curl \AA})_2\dot{\eta}\nabla\cdot\AA^{\!\perp}_\parallel
-\int_{{\mathbb R}^2} \nabla \dot{\Phi}\cdot\AA^{\!\perp}_\parallel \nonumber \\
& \hspace{0.5in}\mbox{}
+ \alpha \int_{{\mathbb R}^2} \nabla^\perp \Delta^{-1}(\nabla\cdot(\delta \AA_\parallel)^\perp)\cdot\AA^{\!\perp}_\parallel \nonumber \\
& = \alpha\int_{{\mathbb R}^2} (\udl{\curl \AA})_\mathrm{h}^\perp\dot{\eta}\cdot\AA^{\!\perp}_\parallel
-\int_{{\mathbb R}^2} (\udl{\curl \AA})_2\dot{\eta}\nabla\cdot\AA^{\!\perp}_\parallel
-\int_{{\mathbb R}^2} \nabla \dot{\Phi}\cdot\AA^{\!\perp}_\parallel  \nonumber \\
& \hspace{0.5in}\mbox{}
+ \alpha \int_{{\mathbb R}^2} \nabla \Delta^{-1}(\nabla\cdot(\delta \AA_\parallel)^\perp)\cdot\AA_\parallel,
\label{Variation 2}
\end{align}
where an integration by parts and the fact that
$$(\curl \AA_y)_\parallel -\nabla(\udl{\curl \AA})_2=(\udl{\curl\curl \AA})_\mathrm{h}^\perp=\alpha(\udl{\curl \AA})_\mathrm{h}^\perp$$
has been used. Combining \eqref{Variation 1}, \eqref{Variation 2} and the calculation
$$
\delta\int_{{\mathbb R}^2} \nabla \Delta^{-1}(\nabla\cdot\AA^{\!\perp}_\parallel)\cdot\AA_\parallel
= \int_{{\mathbb R}^2} \nabla \Delta^{-1}(\nabla\cdot(\delta \AA_\parallel)^\perp)\cdot\AA_\parallel
+ \int_{{\mathbb R}^2} \nabla \Delta^{-1}(\nabla\cdot\AA^{\!\perp}_\parallel)\cdot\delta\AA_\parallel
$$
yields
\begin{eqnarray}
\lefteqn{\delta \left\{\int_{D_\eta} \left(\tfrac{1}{2}|\curl \AA|^2-\tfrac{1}{2}\alpha \AA\cdot\curl \AA\right)
-\tfrac{1}{2}\alpha \int_{{\mathbb R}^2} \nabla \Delta^{-1}(\nabla\cdot\AA^{\!\perp}_\parallel)\cdot\AA_\parallel\right\}}\nonumber \\
& = & \alpha\int_{{\mathbb R}^2} (\udl{\curl \AA})_\mathrm{h}\dot{\eta}\cdot\AA_\parallel
-\int_{{\mathbb R}^2} (\udl{\curl \AA})_2\dot{\eta}\nabla\cdot\AA^{\!\perp}_\parallel
-\int_{{\mathbb R}^2} \nabla \dot{\Phi}\cdot\AA^{\!\perp}_\parallel 
-\tfrac{1}{2}\alpha \int_{{\mathbb R}^2} \dot{\AA}_\parallel^\perp\cdot\AA_\parallel \nonumber \\
& & \qquad\mbox{}+ \int_{{\mathbb R}^2} \left (\tfrac{1}{2}|\udl{\curl \AA}|^2 -\tfrac{1}{2}\alpha \udl{\AA}\cdot\udl{\curl \AA}\right)\dot{\eta}-\tfrac{1}{2}\alpha \int_{{\mathbb R}^2} \AA^{\!\perp}_\parallel\cdot\delta\AA_\parallel,
\label{Variation 3}
\end{eqnarray}
where Proposition \ref{Inverse Laplacian proposition} has also been used.
Repeating the argument leading to \eqref{Variation 2}, one finds that
\begin{align}
\int_{{\mathbb R}^2} \AA^{\!\perp}_\parallel\cdot\delta\AA_\parallel 
& = \int_{{\mathbb R}^2} \AA^{\!\perp}_\parallel\cdot \big(
\dot{\AA}_\parallel+(\AA_y)_\parallel \dot{\eta}+\nabla\dot{\eta}\udl{\AA}_2
\big) \nonumber \\
& = \int_{{\mathbb R}^2} \AA^{\!\perp}_\parallel\cdot\dot{\AA}_\parallel
+\int_{{\mathbb R}^2}\big((\AA_y)_\parallel -\nabla(\udl{\AA}_2)\big)\dot{\eta}\cdot\AA^{\!\perp}_\parallel
-\int_{{\mathbb R}^2} \udl{\AA}_2\dot{\eta} \nabla\cdot\AA^{\!\perp}_\parallel \nonumber \\
& =- \int_{{\mathbb R}^2} \AA_\parallel\cdot\dot{\AA}_\parallel^\perp
+\int_{{\mathbb R}^2}(\udl{\curl \AA})_\mathrm{h}\dot{\eta}\cdot\AA_\parallel
-\int_{{\mathbb R}^2} \udl{\AA}_2\dot{\eta} \nabla\cdot\AA^{\!\perp}_\parallel, \label{Variation 4}
\end{align}
and it follows from \eqref{Variation 3}, \eqref{Variation 4} and the calculation
$$
(\udl{\curl \AA})_\mathrm{h}\cdot\AA_\parallel - \udl{\AA}\cdot\udl{\curl \AA}
= - \nabla\cdot \AA^{\!\perp}_\parallel\, \udl{\AA}_2
$$
that\pagebreak
\begin{align*}
&\delta \left\{\int_{D_\eta} \left(\tfrac{1}{2}|\curl \AA|^2-\tfrac{1}{2}\alpha \AA\cdot\curl \AA\right)
-\tfrac{1}{2}\alpha \int_{{\mathbb R}^2} \nabla \Delta^{-1}(\nabla\cdot\AA^{\!\perp}_\parallel)\cdot\AA_\parallel\right\} \\
& \qquad =-\int_{{\mathbb R}^2} (\udl{\curl \AA})_2\dot{\eta}\nabla\cdot\AA^{\!\perp}_\parallel
+\int_{{\mathbb R}^2} \dot{\Phi}\, \nabla\cdot\AA^{\!\perp}_\parallel + \int_{{\mathbb R}^2} \tfrac{1}{2}|\udl{\curl \AA}|^2\dot{\eta}.
\end{align*}

Finally, note that
\begin{align*}
\delta\int_{{\mathbb R}^2}
\big( -\tfrac{1}{2}\alpha&\nabla\Delta^{-1}(\nabla\cdot\AA^{\!\star\!\perp}_\parallel)\cdot\AA^{\!\star}_\parallel\big)\\
&=
- \tfrac{1}{2}\alpha\int_{{\mathbb R}^2}\big(
\nabla\Delta^{-1}(\nabla\cdot\AA^{\!\star\!\perp}_\parallel)\cdot\dot{\AA}^{\!\star}_\parallel
+\nabla\Delta^{-1}(\nabla\cdot\AA^{\!\star}_\parallel)\cdot\dot{\AA}^{\!\star\!\perp}_\parallel\big)\\
&=
- \tfrac{1}{2}\alpha\int_{{\mathbb R}^2}\big(
\nabla^\perp\Delta^{-1}(\nabla\cdot\AA^{\!\star\!\perp}_\parallel)\cdot\dot{\AA}^{\!\star\!\perp}_\parallel
+\nabla\Delta^{-1}(\nabla\cdot\AA^{\!\star}_\parallel)\cdot\dot{\AA}^{\!\star\!\perp}_\parallel\big)\\
&=
 \tfrac{1}{2}\alpha\int_{{\mathbb R}^2}\big(
\nabla^\perp\Delta^{-1}(\nabla\cdot\AA^{\!\star\!\perp}_\parallel)
+\nabla\Delta^{-1}(\nabla\cdot\AA^{\!\star}_\parallel)\big)\cdot\uu^\star_\parallel\dot{\eta} \\
&=
 \tfrac{1}{2}\alpha\int_{{\mathbb R}^2}\big(\AA^{\!\star}_\parallel
 +2\nabla^\perp\Delta^{-1}(\nabla\cdot\AA^{\!\star\!\perp}_\parallel)\big)\cdot\uu^\star_\parallel\dot{\eta}, \\
 \\
 \delta\int_{{\mathbb R}^2}
 \frac{|\cc|^2}{2\alpha}(\sin \alpha \eta - \alpha \eta)
&= -|\cc|^2\!\!\int_{{\mathbb R}^2}\sin^2(\tfrac{1}{2}\alpha \eta)\dot{\eta} =
-\tfrac{1}{2}\alpha\int_{{\mathbb R}^2} \AA^{\!\star}_\parallel.\uu^\star_\parallel \dot{\eta}
\end{align*}
and
\begin{align*}
\delta \int_{{\mathbb R}^2}\!\! (-\nabla \Phi\cdot\AA^{\!\star\!\perp}_\parallel)
& = \int_{{\mathbb R}^2} \!\!\big(\dot{\Phi}\nabla\cdot\AA^{\!\star\!\perp}_\parallel-
\nabla\Phi\cdot \dot{\AA}^{\!\star\perp}_\parallel \big)\\
& = \int_{{\mathbb R}^2}\!\! \big(\dot{\Phi}\nabla\cdot\AA^{\!\star\!\perp}_\parallel
+\nabla\Phi\cdot \uu^{\star}_\parallel \dot{\eta} \big)\\
& = \int_{{\mathbb R}^2} \!\!\big(\dot{\Phi}\nabla\cdot\AA^{\!\star\!\perp}_\parallel
+\big((\curl \AA)_\parallel + \alpha \nabla^\perp \Delta^{-1} (\nabla\cdot\AA^{\!\perp}_\parallel)\big)\cdot \uu^{\star}_\parallel \dot{\eta} \big)
\\& = \int_{{\mathbb R}^2}\!\! \big(\dot{\Phi}\nabla\cdot\AA^{\!\star\!\perp}_\parallel
+\big((\udl{\curl \AA})_\mathrm{h}
+(\udl{\curl \AA})_2\nabla\eta
 + \alpha \nabla^\perp \Delta^{-1} (\nabla\cdot\AA^{\!\perp}_\parallel)\big)\cdot \uu^{\star}_\parallel \dot{\eta} \big)
\end{align*}
because
$\dot{\AA}^{\!\star\!\perp}_\parallel = - \uu^\star_\parallel\dot{\eta}$,
and evidently
$$\delta \int_{{\mathbb R}^2}\bigg( \tfrac{1}{2}g\eta^2 + \sigma\big((1+|\nabla\eta|^2)^\frac{1}{2}-1\big)\bigg)
\!=\!
\int_{{\mathbb R}^2}\bigg(
g\eta- \sigma\!\left(\frac{\eta_x}{(1+|\nabla\eta|^2)^\frac{1}{2}}\right)_{\!\!\!x}-\sigma\!\left(\frac{\eta_z}{(1+|\nabla\eta|^2)^\frac{1}{2}}\right)_{\!\!\!z}
\bigg)\dot{\eta}.$$
The Euler-Lagrange equations for ${\mathcal L}(\eta,\Phi)$ are therefore
\begin{align}
&\nabla\cdot \AA^{\!\perp}_\parallel+\nabla\cdot \AA^{\!\star\!\perp}_\parallel =0, \label{EL 1}\\
& (\udl{\curl \AA})_2 \big(-\nabla\cdot \AA^{\!\perp}_\parallel+\nabla\eta\cdot\uu^\star_\parallel\big)
+\alpha \nabla^\perp\Delta^{-1}\big(\nabla\cdot\AA^{\!\perp}_\parallel+\nabla\cdot \AA^{\!\star\!\perp}_\parallel\big)\cdot\uu^\star_\parallel\nonumber \\
& \qquad\mbox{}
+ \tfrac{1}{2}|\udl{\curl \AA}|^2+(\udl{\curl \AA})_\mathrm{h}.\uu^\star_\parallel
+g\eta- \sigma\!\left(\frac{\eta_x}{(1+|\nabla\eta|^2)^\frac{1}{2}}\right)_{\!\!\!x}\!\!-\sigma\!\left(\frac{\eta_z}{(1+|\nabla\eta|^2)^\frac{1}{2}}\right)_{\!\!\!z}=0, \label{EL 2}
\end{align}
which are equivalent to equations \eqref{Beltrami 4}, \eqref{Beltrami 5} because
$\nabla\cdot \AA^{\!\star\!\perp}_\parallel=
-\nabla\eta.\uu^\star_\parallel=\udl{\uu}^\star\cdot\NN$,
$\udl{\curl \AA}_\mathrm{h}.\uu^\star_\parallel
=\udl{\curl \AA}.\udl{\uu}^\star$ and
$\nabla\cdot \AA^{\!\perp}_\parallel = \udl{\curl \AA}\cdot\NN$.

\section{A nonlocal operator} \label{GDNO}

In this section we express the variational functional ${\mathcal L}(\eta,\Phi)$ and its Euler-Lagrange equations
in terms of a nonlocal operator $H(\eta)$ defined as follows: for fixed $\Phi$, let $\AA$ denote the unique solution of
\eqref{A BVP 1}--\eqref{A BVP 5} and define
\begin{equation}
H(\eta)\Phi = \nabla\cdot\AA^{\!\perp}_\parallel. \label{Defn of H}
\end{equation}

\begin{lemma} \label{Intermediate H result}
The formula
\begin{align*}
\int_{{\mathbb R}^2} \Phi_1 H(\eta) \Phi_2 & = \int_{D_\eta} \left(\curl \BB\cdot\curl \CC-\tfrac{1}{2}\alpha \BB\cdot\curl \CC-\tfrac{1}{2}\alpha \CC\cdot\curl \BB\right) \nonumber \\
& \qquad\mbox{}- \tfrac{1}{2}\alpha\int_{{\mathbb R}^2} \nabla \Delta^{-1}(\nabla\cdot\BB^{\!\perp}_\parallel)\cdot\CC_\parallel
-\tfrac{1}{2}\alpha\int_{{\mathbb R}^2} \nabla \Delta^{-1}(\nabla\cdot\CC^{\!\perp}_\parallel)\cdot\BB_\parallel
\end{align*}
holds for all $\Phi_1$, $\Phi_2$, where $\BB$ and $\CC$ denote the unique solutions of \eqref{A BVP 1}--\eqref{A BVP 5}
with respectively $\Phi=\Phi_1$ and $\Phi=\Phi_2$ (so that
$H(\eta)\Phi_1 = \nabla\cdot\BB^{\!\perp}_\parallel$, $H(\eta)\Phi_2 = \nabla\cdot\CC^{\!\perp}_\parallel$).

In particular, $H(\eta)$ is formally self-adjoint, that is
$$\int_{{\mathbb R}^2} \Phi_1 H(\eta) \Phi_2 = \int_{{\mathbb R}^2} \Phi_2 H(\eta) \Phi_1$$
for all $\Phi_1$, $\Phi_2$.
\end{lemma}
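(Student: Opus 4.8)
The plan is to reduce the claimed identity to two integrations by parts — first over ${\mathbb R}^2$, using the boundary condition \eqref{A BVP 5} for $\BB$, then over $D_\eta$ via the divergence theorem, using the Beltrami equation \eqref{A BVP 1} and the bottom condition \eqref{A BVP 3} — after which a single asymmetric $\alpha$-term remains and is symmetrised using Proposition \ref{Inverse Laplacian proposition}. Throughout, boundary terms `at infinity' are discarded by evanescence, and surface integrals over $S_\eta$ are written over ${\mathbb R}^2$ via $\nn = \NN/|\NN|$ and $\mathrm{d}S = |\NN|\,\mathrm{d}x\,\mathrm{d}z$. First I would write $\int_{{\mathbb R}^2} \Phi_1 H(\eta)\Phi_2 = \int_{{\mathbb R}^2} \Phi_1\,\nabla\cdot\CC^{\!\perp}_\parallel$ and integrate by parts (Proposition 1.1(vii)) to obtain $-\int_{{\mathbb R}^2} \nabla\Phi_1\cdot\CC^{\!\perp}_\parallel$. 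Condition \eqref{A BVP 5} for $\BB$ gives $\nabla\Phi_1 = (\curl\BB)_\parallel + \alpha\nabla^\perp\Delta^{-1}(\nabla\cdot\BB^{\!\perp}_\parallel)$; substituting this and using the pointwise identities of Proposition 1.1(i),(iv) — so that $-(\curl\BB)_\parallel\cdot\CC^{\!\perp}_\parallel = (\curl\BB)_\parallel^\perp\cdot\CC_\parallel = (\udl{\curl\BB}\wedge\NN)\cdot\udl{\CC}$ and $\nabla^\perp\Delta^{-1}(\nabla\cdot\BB^{\!\perp}_\parallel)\cdot\CC^{\!\perp}_\parallel = \nabla\Delta^{-1}(\nabla\cdot\BB^{\!\perp}_\parallel)\cdot\CC_\parallel$ — yields
$$\int_{{\mathbb R}^2} \Phi_1 H(\eta)\Phi_2 = \int_{{\mathbb R}^2} (\udl{\curl\BB}\wedge\NN)\cdot\udl{\CC} - \alpha\int_{{\mathbb R}^2} \nabla\Delta^{-1}(\nabla\cdot\BB^{\!\perp}_\parallel)\cdot\CC_\parallel.$$

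Next I would convert the first term into integrals over $D_\eta$. Writing $(\udl{\curl\BB}\wedge\NN)\cdot\udl{\CC} = -(\udl{\curl\BB}\wedge\udl{\CC})\cdot\NN$ (scalar triple product) and applying the divergence theorem to $\div(\curl\BB\wedge\CC) = \CC\cdot\curl\curl\BB - \curl\BB\cdot\curl\CC$, the contribution of the rigid bottom vanishes since \eqref{A BVP 3} (for $\CC$) makes $\CC$ parallel to $\jj$ at $y=-h$, whence $(\curl\BB\wedge\CC)\cdot\jj = 0$ there; combined with $\curl\curl\BB = \alpha\,\curl\BB$ from \eqref{A BVP 1} this gives $\int_{{\mathbb R}^2}(\udl{\curl\BB}\wedge\NN)\cdot\udl{\CC} = \int_{D_\eta}(\curl\BB\cdot\curl\CC - \alpha\,\CC\cdot\curl\BB)$, so that
$$\int_{{\mathbb R}^2} \Phi_1 H(\eta)\Phi_2 = \int_{D_\eta}\big(\curl\BB\cdot\curl\CC - \alpha\,\CC\cdot\curl\BB\big) - \alpha\int_{{\mathbb R}^2} \nabla\Delta^{-1}(\nabla\cdot\BB^{\!\perp}_\parallel)\cdot\CC_\parallel.$$

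It remains to restore the symmetry. Applying the divergence theorem to $\div(\BB\wedge\CC) = \CC\cdot\curl\BB - \BB\cdot\curl\CC$ (the bottom term vanishing because $\BB\wedge\CC = \mathbf{0}$ at $y=-h$, both fields being parallel to $\jj$ there) gives $\int_{D_\eta}(\CC\cdot\curl\BB - \BB\cdot\curl\CC) = -\int_{{\mathbb R}^2} \BB^{\!\perp}_\parallel\cdot\CC_\parallel$ by Proposition 1.1(iv), while Proposition \ref{Inverse Laplacian proposition} with $\ff = \CC_\parallel$, $\gg = \BB_\parallel$ gives $\int_{{\mathbb R}^2} \nabla\Delta^{-1}(\nabla\cdot\BB^{\!\perp}_\parallel)\cdot\CC_\parallel - \int_{{\mathbb R}^2} \nabla\Delta^{-1}(\nabla\cdot\CC^{\!\perp}_\parallel)\cdot\BB_\parallel = \int_{{\mathbb R}^2} \CC_\parallel\cdot\BB^{\!\perp}_\parallel$. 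Adding these two identities the right-hand sides cancel, whence
$$\int_{D_\eta}\CC\cdot\curl\BB + \int_{{\mathbb R}^2} \nabla\Delta^{-1}(\nabla\cdot\BB^{\!\perp}_\parallel)\cdot\CC_\parallel = \tfrac12\int_{D_\eta}\big(\BB\cdot\curl\CC + \CC\cdot\curl\BB\big) + \tfrac12\int_{{\mathbb R}^2}\big(\nabla\Delta^{-1}(\nabla\cdot\BB^{\!\perp}_\parallel)\cdot\CC_\parallel + \nabla\Delta^{-1}(\nabla\cdot\CC^{\!\perp}_\parallel)\cdot\BB_\parallel\big).$$
Substituting this into the previous display produces exactly the asserted formula, and formal self-adjointness is then immediate, since the right-hand side of that formula is manifestly invariant under the interchange $\BB \leftrightarrow \CC$ of the two solutions, i.e. under $\Phi_1 \leftrightarrow \Phi_2$.

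The main obstacle I anticipate is purely one of bookkeeping: fixing the sign in each use of the algebraic identities of Proposition 1.1(i),(iv) and of the scalar triple product, choosing the outward normal correctly on each component of $\partial D_\eta$ (the upward $\nn$ on $S_\eta$ versus $-\jj$ on the bottom, with the measure relation $\mathrm{d}S = |\NN|\,\mathrm{d}x\,\mathrm{d}z$), and verifying that every boundary term at infinity genuinely vanishes under the evanescence hypothesis; no further analytic input is needed beyond the structure \eqref{A BVP 1}, \eqref{A BVP 3}, \eqref{A BVP 5} of the two boundary-value problems and the self-adjointness recorded in Proposition \ref{Inverse Laplacian proposition}.
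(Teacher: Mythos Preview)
Your proof is correct and follows essentially the same route as the paper's: the same integrations by parts over $D_\eta$ (using \eqref{A BVP 1} and \eqref{A BVP 3}), the boundary condition \eqref{A BVP 5}, the identity $(\udl{\FF}\wedge\NN)\cdot\udl{\GG}=\FF_\parallel^\perp\cdot\GG_\parallel$, and Proposition~\ref{Inverse Laplacian proposition} for the final symmetrisation. The only cosmetic difference is direction --- the paper starts from the symmetric volume expression and reduces it to $\int_{{\mathbb R}^2}\Phi_1 H(\eta)\Phi_2$, whereas you start from the latter and build up to the former, splitting the divergence-theorem step into two explicit applications (to $\curl\BB\wedge\CC$ and $\BB\wedge\CC$) rather than combining them.
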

\begin{proof}
Note that
\begin{align*}
& \int_{D_\eta} \left(\curl \BB\cdot\curl \CC-\tfrac{1}{2}\alpha \BB\cdot\curl \CC-\tfrac{1}{2}\alpha \CC\cdot\curl \BB\right) \\
& = \int_{D_\eta} \left(\curl \curl \BB-\alpha\, \curl \BB\right)\cdot\CC
+\int_{{\mathbb R}^2} (\udl{\curl \BB}\wedge\NN)\cdot\udl{\CC} - \tfrac{1}{2}\alpha \int_{{\mathbb R}^2} (\udl{\BB}\wedge \NN)\cdot\udl{\CC} \\
& = -\int_{{\mathbb R}^2} (\curl \BB)_\parallel\cdot\CC^{\!\perp}_\parallel
-\tfrac{1}{2}\alpha \int_{{\mathbb R}^2} \BB^{\!\perp}_\parallel\cdot\CC_\parallel \\
& = -\int_{{\mathbb R}^2} \nabla \Phi_1\cdot\CC^{\!\perp}_\parallel
+ \alpha\int_{{\mathbb R}^2} \nabla^\perp \Delta^{-1}(\nabla\cdot\BB^{\!\perp}_\parallel)\cdot\CC^{\!\perp}_\parallel
-\tfrac{1}{2}\alpha \int_{{\mathbb R}^2} \BB^{\!\perp}_\parallel\cdot\CC_\parallel \\
& = \int_{{\mathbb R}^2} \Phi_1 \nabla\cdot\CC^{\!\perp}_\parallel 
+ \alpha\int_{{\mathbb R}^2} \nabla \Delta^{-1}(\nabla\cdot\BB^{\!\perp}_\parallel)\cdot\CC_\parallel
-\tfrac{1}{2}\alpha \int_{{\mathbb R}^2} \BB^{\!\perp}_\parallel\cdot\CC_\parallel \\
& = \int_{{\mathbb R}^2} \Phi_1 H(\eta) \Phi_2
+ \alpha\int_{{\mathbb R}^2} \nabla \Delta^{-1}(\nabla\cdot\BB^{\!\perp}_\parallel)\cdot\CC_\parallel 
-\tfrac{1}{2}\alpha \int_{{\mathbb R}^2} \BB^{\!\perp}_\parallel\cdot\CC_\parallel,
\end{align*}
and thus
\begin{align}
& \int_{D_\eta} \left(\curl \BB\cdot\curl \CC-\tfrac{1}{2}\alpha \BB\cdot\curl \CC-\tfrac{1}{2}\alpha \CC\cdot\curl \BB\right) \nonumber \\
& \qquad\mbox{}- \tfrac{1}{2}\alpha\int_{{\mathbb R}^2} \nabla^\perp \Delta^{-1}(\nabla\cdot\BB^{\!\perp}_\parallel)\cdot\CC^{\!\perp}_\parallel
-\tfrac{1}{2}\alpha\int_{{\mathbb R}^2} \nabla^\perp \Delta^{-1}(\nabla\cdot\CC^{\!\perp}_\parallel)\cdot\BB^{\!\perp}_\parallel
\label{Symmetric formula} \\
& = \int_{{\mathbb R}^2} \Phi_1 H(\eta) \Phi_2 \nonumber \\
& \qquad\mbox{}
+ \tfrac{1}{2}\alpha\int_{{\mathbb R}^2} \nabla \Delta^{-1}(\nabla\cdot\BB^{\!\perp}_\parallel)\cdot\CC_\parallel
-\tfrac{1}{2} \alpha\int_{{\mathbb R}^2} \nabla \Delta^{-1}(\nabla\cdot\CC^{\!\perp}_\parallel)\cdot\BB_\parallel 
-\tfrac{1}{2}\alpha \int_{{\mathbb R}^2} \BB^{\!\perp}_\parallel\cdot\CC_\parallel  \nonumber \\
& = \int_{{\mathbb R}^2} \Phi_1 H(\eta) \Phi_2, \nonumber
\end{align}
where the last line follows by Proposition \ref{Inverse Laplacian proposition}.
\end{proof}

\begin{theorem}
The variational functional $\mathcal{L}(\eta,\Phi)$ and its Euler-Lagrange equations may be written as respectively
$$\mathcal{L}(\eta,\Phi) = \int_{{\mathbb R}^2} \bigg(\tfrac{1}{2}\Phi H(\eta)\Phi 
-\nabla \Phi\cdot\AA^{\!\star\!\perp}_\parallel
+\Gamma(\eta)+ \tfrac{1}{2}g\eta^2 + \sigma\big((1+|\nabla\eta|^2)^\frac{1}{2}-1\big)\bigg)$$
and\pagebreak
\begin{align*}
& H(\eta)\Phi + \udl{\uu}^\star\cdot\NN=0, \\
& \tfrac{1}{2}|K(\eta)\Phi|^2 -\frac{(H(\eta)\Phi+K(\eta)\Phi\cdot\nabla\eta)^2}{2(1+|\nabla\eta|^2)}
-\alpha\frac{H(\eta)\Phi (H(\eta)\Phi+K(\eta)\Phi\cdot\nabla\eta)}{1+|\nabla\eta|^2} \\
& \qquad\mbox{}+K(\eta)\Phi\cdot\udl{\uu}^\star_\mathrm{h}
+g\eta- \sigma\!\left(\frac{\eta_x}{(1+|\nabla\eta|^2)^\frac{1}{2}}\right)_{\!\!\!x}
-\sigma\!\left(\frac{\eta_z}{(1+|\nabla\eta|^2)^\frac{1}{2}}\right)_{\!\!\!z}= 0,
\end{align*}
where
$K(\eta)\Phi=\nabla \Phi - \alpha \nabla^\perp \Delta^{-1} (H(\eta)\Phi)$.
\end{theorem}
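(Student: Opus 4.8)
The plan is to prove the two claims separately: the closed expression for $\mathcal{L}(\eta,\Phi)$ follows at once from Lemma \ref{Intermediate H result}, while the reformulated Euler--Lagrange equations are obtained by rewriting \eqref{EL 1} and \eqref{EL 2} in terms of $H(\eta)$ and $K(\eta)$. For the formula for $\mathcal{L}$ I would apply Lemma \ref{Intermediate H result} with $\Phi_1=\Phi_2=\Phi$, so that $\BB=\CC=\AA$; this gives
$$\int_{{\mathbb R}^2}\Phi\, H(\eta)\Phi=\int_{D_\eta}\big(|\curl\AA|^2-\alpha\AA\cdot\curl\AA\big)-\alpha\int_{{\mathbb R}^2}\nabla\Delta^{-1}(\nabla\cdot\AA^{\!\perp}_\parallel)\cdot\AA_\parallel,$$
and halving it shows that $\tfrac12\int_{{\mathbb R}^2}\Phi\, H(\eta)\Phi$ equals the first three integrals in the definition \eqref{Defn of L} of $\mathcal{L}(\eta,\Phi)$; the remaining terms $-\nabla\Phi\cdot\AA^{\!\star\!\perp}_\parallel$, $\Gamma(\eta)$, $\tfrac12 g\eta^2$ and $\sigma\big((1+|\nabla\eta|^2)^\frac{1}{2}-1\big)$ carry over verbatim. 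This part is immediate once Lemma \ref{Intermediate H result} is available.

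For the Euler--Lagrange equations I would start from \eqref{EL 1}, \eqref{EL 2}. The first stated equation is immediate: $\nabla\cdot\AA^{\!\perp}_\parallel=H(\eta)\Phi$ by \eqref{Defn of H}, while the vector identity $\udl{\curl\FF}\cdot\NN=\nabla\cdot\FF^{\!\perp}_\parallel$ recorded above, applied to $\FF=\AA^{\!\star}$, yields $\nabla\cdot\AA^{\!\star\!\perp}_\parallel=\udl{\curl\AA^{\!\star}}\cdot\NN=\udl{\uu}^\star\cdot\NN$ (recall $\uu^\star=\curl\AA^{\!\star}$), so \eqref{EL 1} becomes $H(\eta)\Phi+\udl{\uu}^\star\cdot\NN=0$. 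For \eqref{EL 2} the key is to reconstruct the surface value of $\curl\AA$: by \eqref{Defn of H} and the same identity $\udl{\curl\AA}\cdot\NN=H(\eta)\Phi$, while \eqref{A BVP 5} and the definition of $K(\eta)$ give $(\curl\AA)_\parallel=(\udl{\curl\AA})_\mathrm{h}+(\udl{\curl\AA})_2\nabla\eta=K(\eta)\Phi$. These two relations determine $\udl{\curl\AA}$ uniquely; solving the linear system gives
$$(\udl{\curl\AA})_2=\frac{H(\eta)\Phi+K(\eta)\Phi\cdot\nabla\eta}{1+|\nabla\eta|^2},\qquad (\udl{\curl\AA})_\mathrm{h}=K(\eta)\Phi-(\udl{\curl\AA})_2\,\nabla\eta,$$
and hence
$$\tfrac12|\udl{\curl\AA}|^2=\tfrac12|K(\eta)\Phi|^2+\frac{(H(\eta)\Phi)^2-(K(\eta)\Phi\cdot\nabla\eta)^2}{2(1+|\nabla\eta|^2)}.$$
I would then substitute these into \eqref{EL 2} term by term, using that the $y$-component of $\uu^\star$ vanishes (so $\uu^\star_\parallel=\udl{\uu}^\star_\mathrm{h}$ and $\nabla\eta\cdot\uu^\star_\parallel=-\udl{\uu}^\star\cdot\NN$), invoking the first equation $H(\eta)\Phi+\udl{\uu}^\star\cdot\NN=0$ to annihilate the nonlocal term $\alpha\nabla^\perp\Delta^{-1}(\nabla\cdot\AA^{\!\perp}_\parallel+\nabla\cdot\AA^{\!\star\!\perp}_\parallel)\cdot\uu^\star_\parallel$ and to collapse the coefficient multiplying $(\udl{\curl\AA})_2$, and finally re-introducing $K(\eta)\Phi=\nabla\Phi-\alpha\nabla^\perp\Delta^{-1}(H(\eta)\Phi)$ wherever $\nabla^\perp\Delta^{-1}$ persists. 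Setting $\alpha=0$, where $H(\eta)\Phi=G(\eta)\Phi$ and $K(\eta)\Phi=\nabla\Phi$, recovers the Zakharov-Craig-Sulem equation of the introduction.

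The main obstacle is the bookkeeping in this last step: it amounts to a `velocity reconstruction' on the graph $S_\eta$, re-expressing the three-component surface vector $\udl{\curl\AA}$ --- and especially $|\udl{\curl\AA}|^2$ --- purely in terms of its scalar normal part $H(\eta)\Phi$ and its two-component tangential gradient part $K(\eta)\Phi$. One must keep careful account of the metric factors $1+|\nabla\eta|^2$, of which $\alpha$-dependent groupings survive, and of precisely which combinations cancel once $H(\eta)\Phi+\udl{\uu}^\star\cdot\NN=0$ is used; no single step is conceptually difficult, but the algebra must be organised carefully for the `fraction' terms and the term $K(\eta)\Phi\cdot\udl{\uu}^\star_\mathrm{h}$ to emerge in the stated form.
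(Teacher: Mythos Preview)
Your proposal is correct and follows essentially the same approach as the paper: you apply Lemma \ref{Intermediate H result} with $\Phi_1=\Phi_2=\Phi$ to obtain the compact form of $\mathcal{L}$, and for the Euler--Lagrange equations you start from \eqref{EL 1}, \eqref{EL 2} and reconstruct $\udl{\curl\AA}$ from $H(\eta)\Phi$ and $K(\eta)\Phi$ via exactly the two identities the paper records, namely $\udl{\vv}_2 = (\udl{\vv}\cdot\NN+\vv_\parallel\cdot\nabla\eta)/(1+|\nabla\eta|^2)$ and $|\udl{\vv}|^2 = |\vv_\parallel|^2 + ((\udl{\vv}\cdot\NN)^2 - (\vv_\parallel\cdot\nabla\eta)^2)/(1+|\nabla\eta|^2)$ with $\vv=\curl\AA$. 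The paper likewise invokes both \eqref{EL 1} and \eqref{EL 2} in deriving the second equation, so your use of the first to simplify the second is in line with its argument.
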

\proof Using Lemma \ref{Intermediate H result} with $\Phi_1=\Phi_2=\Phi$, one finds that
$$
\int_{D_\eta} \left(\tfrac{1}{2}|\curl \AA|^2-\tfrac{1}{2}\alpha \AA\cdot\curl \AA\right)
- \tfrac{1}{2}\alpha\int_{{\mathbb R}^2} \nabla \Delta^{-1}(\nabla\cdot\AA^{\!\perp}_\parallel)\cdot\AA_\parallel 
= \tfrac{1}{2}\int_{{\mathbb R}^2} \Phi H(\eta) \Phi,
$$
where $\AA$ is the unique solution of \eqref{A BVP 1}--\eqref{A BVP 5}, and the first result follows from this formula
and the definition \eqref{Defn of L} of ${\mathcal L}(\eta,\Phi)$.
The result for the Euler-Lagrange equations is obtained from \eqref{EL 1}, \eqref{EL 2}
and the identities
$$|\udl{\vv}|^2 = |\vv_\parallel|^2 + \frac{(\udl{\vv}\cdot\NN)^2 - (\vv_\parallel\cdot\nabla\eta)^2}{1+|\nabla\eta|^2}, \qquad
\udl{\vv}_2 = \frac{\udl{\vv}\cdot\NN+\vv_\parallel\cdot\nabla\eta}{1+|\nabla\eta|^2}
$$
with $\vv=\curl \AA$, so that
$$\udl{\vv}\cdot\NN = \nabla\cdot\AA^{\!\!\perp}_\parallel = H(\eta)\Phi, \qquad
\vv_\parallel = \nabla \Phi - \alpha \nabla^\perp \Delta^{-1}(H(\eta)\Phi).\eqno{\Box}$$

\begin{remark}
Note that $\Delta^{-1} (H(\eta)\Phi)$ is well defined because $H(\eta)\Phi = \nabla\cdot\AA^{\!\perp}_\parallel$.
\end{remark}

\section{Functional-analytic aspects} \label{FA}

\subsection{Hodge-Weyl decomposition} \label{FA prerequisites}

Let us work in the Sobolev spaces
$$H^s({\mathbb R}^2)
=
\left\{u \in {\mathcal S}^\prime({\mathbb R}^2):
\| u \|_s^2 := \int_{{\mathbb R}^2} \left( 1 + |\kk|^2 \right)^{s} |\hat u(\kk)|^2 \,\mathrm{d}\kk < \infty\right\}, \qquad s \in {\mathbb R},
$$
where $\hat{u}=\mathcal{F}[u]$ denotes the Fourier transform of $u$, and the Beppo-Levi
spaces
$$\dot{H}^s({\mathbb R}^2):=\{u \in L^2_\mathrm{loc}({\mathbb R}^2): \|u\|_{\dot{H}^s({\mathbb R}^2)}
:=\|\nabla u\|_{H^{s-1}({\mathbb R}^2)} < \infty\}, \qquad s \geq 0.$$
For each $\ff \in L^2({\mathbb R}^2)^2$ there exist unique functions $\Phi$, $\Psi \in \dot{H}^1({\mathbb R}^2)$
such that
$$
\ff = \nabla \Phi + \nabla^\perp \Psi;
$$
this (obviously orthogonal) decomposition of $u$ is its \emph{Hodge-Weyl decomposition}.
The functions $\Phi, \Psi$ are characterised as the weak solutions
of the equations 
$\Delta \Phi = \nabla \cdot \ff$, $\Delta\Psi=\nabla^\perp\cdot \ff$, that is
$\Phi$ and $\Psi$ are the unique functions in $\dot{H}^1({\mathbb R})$ such that
$$\int_{{\mathbb R}^2} \nabla \Phi \cdot \nabla \chi = \int_{{\mathbb R}^2} \ff\cdot\nabla \chi,
\qquad
\int_{{\mathbb R}^2} \nabla \Psi \cdot \nabla \chi = \int_{{\mathbb R}^2} \ff\cdot\nabla^\perp \chi$$
for all $\chi \in \dot{H}^1({\mathbb R}^2)$, and one accordingly
writes $\Phi=\Delta^{-1} (\nabla\cdot \ff)$, $\Psi=\Delta^{-1} (\nabla^\perp \cdot \ff)$.

\begin{proposition}
For each $s \geq 0$ the formulae
$\ff \mapsto \Delta^{-1}(\nabla \cdot \ff)$ and $\ff \mapsto \Delta^{-1}(\nabla^\perp \cdot \ff)$
define continuous linear mappings $H^s({\mathbb R}^2)^2\rightarrow\dot{H}^{s+1}({\mathbb R}^2)$.
\end{proposition}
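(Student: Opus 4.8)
The plan is to pass to the Fourier side, where the maps $\ff \mapsto \nabla\Delta^{-1}(\nabla\cdot\ff)$ and $\ff \mapsto \nabla\Delta^{-1}(\nabla^\perp\cdot\ff)$ become matrix-valued Fourier multipliers of operator norm at most $1$ at every frequency, so that the required $\dot{H}^{s+1}$-estimate follows frequency by frequency from the Cauchy-Schwarz inequality.

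Concretely, for $\ff \in H^s({\mathbb R}^2)^2$ with $s \geq 0$ one has $\ff \in L^2({\mathbb R}^2)^2$ (because $(1+|\kk|^2)^s \geq 1$), so the Hodge-Weyl discussion above already furnishes $\Phi = \Delta^{-1}(\nabla\cdot\ff)$ and $\Psi = \Delta^{-1}(\nabla^\perp\cdot\ff)$ in $\dot{H}^1({\mathbb R}^2)$, characterised by $\int_{{\mathbb R}^2}\nabla\Phi\cdot\nabla\chi = \int_{{\mathbb R}^2}\ff\cdot\nabla\chi$ and $\int_{{\mathbb R}^2}\nabla\Psi\cdot\nabla\chi = \int_{{\mathbb R}^2}\ff\cdot\nabla^\perp\chi$ for all $\chi \in \dot{H}^1({\mathbb R}^2)$. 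I would test these identities against Schwartz functions and apply Plancherel, using the symbols $\i\kk$ of $\nabla$ and $\i\kk^\perp$ of $\nabla^\perp$, to obtain $|\kk|^2\hat\Phi(\kk) = -\i\,\kk\cdot\hat\ff(\kk)$ and $|\kk|^2\hat\Psi(\kk) = -\i\,\kk^\perp\cdot\hat\ff(\kk)$, hence
$$\widehat{\nabla\Phi}(\kk) = \frac{\kk\,\big(\kk\cdot\hat\ff(\kk)\big)}{|\kk|^2}, \qquad \widehat{\nabla\Psi}(\kk) = \frac{\kk\,\big(\kk^\perp\cdot\hat\ff(\kk)\big)}{|\kk|^2} \qquad \text{for } \kk \neq \mathbf{0}.$$
Since $|\kk\cdot\hat\ff(\kk)| \leq |\kk|\,|\hat\ff(\kk)|$ and $|\kk^\perp| = |\kk|$, this gives $|\widehat{\nabla\Phi}(\kk)| \leq |\hat\ff(\kk)|$ and $|\widehat{\nabla\Psi}(\kk)| \leq |\hat\ff(\kk)|$ for a.e.\ $\kk$; multiplying by $(1+|\kk|^2)^s$ and integrating yields
$$\|\Phi\|_{\dot{H}^{s+1}({\mathbb R}^2)} = \|\nabla\Phi\|_s \leq \|\ff\|_s, \qquad \|\Psi\|_{\dot{H}^{s+1}({\mathbb R}^2)} = \|\nabla\Psi\|_s \leq \|\ff\|_s.$$
In particular both norms are finite, so $\Phi, \Psi \in \dot{H}^{s+1}({\mathbb R}^2)$, and, together with the obvious linearity, this exhibits the two maps as continuous linear operators $H^s({\mathbb R}^2)^2 \to \dot{H}^{s+1}({\mathbb R}^2)$ of norm at most $1$.

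The point I would flag as needing a word rather than a calculation is that $\Phi$ and $\Psi$ actually lie in $L^2_\mathrm{loc}({\mathbb R}^2)$ --- part of the definition of the Beppo-Levi space, and in two dimensions \emph{not} a consequence of $\nabla\Phi, \nabla\Psi \in L^2$ by themselves (the symbol $\kk/|\kk|^2$ is borderline non-square-integrable near the origin). This is not a new difficulty, though: for $\ff \in L^2({\mathbb R}^2)^2$ the functions $\Phi$ and $\Psi$ are \emph{by definition} those produced by the $L^2$ Hodge-Weyl decomposition recalled just before the proposition, and those already belong to $\dot{H}^1({\mathbb R}^2) \subset L^2_\mathrm{loc}({\mathbb R}^2)$. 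Hence the real content of the statement is the frequency-wise multiplier bound above, which merely transports the $H^s$-regularity of $\ff$ onto $\nabla\Phi$ and $\nabla\Psi$; the only mildly delicate aspect is the bookkeeping of keeping the (already-settled) $L^2_\mathrm{loc}$ membership separate from this new Sobolev estimate.
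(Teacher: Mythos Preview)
The paper states this proposition without proof, so there is no argument to compare against. Your Fourier-multiplier proof is correct and is the natural one: the projections onto the gradient and orthogonal-gradient parts have matrix symbols $\kk\kk^\mathrm{T}/|\kk|^2$ and $\kk(\kk^\perp)^\mathrm{T}/|\kk|^2$, both of operator norm $1$, so the pointwise bound $|\widehat{\nabla\Phi}(\kk)|,\,|\widehat{\nabla\Psi}(\kk)| \leq |\hat\ff(\kk)|$ immediately gives $\|\nabla\Phi\|_s,\,\|\nabla\Psi\|_s \leq \|\ff\|_s$, which is exactly the $\dot{H}^{s+1}$ norm as defined in the paper. Your remark that the $L^2_\mathrm{loc}$ membership is inherited from the already-established $\dot{H}^1$ case (rather than deduced from the Sobolev estimate, which in two dimensions would not suffice) is the right way to handle that point.
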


\subsection{Well-posedness of the defining boundary-value problem} \label{FA for BVP}

In this section we suppose that $\eta$ is a fixed function in $W^{2,\infty}({\mathbb R}^2)$ with $\inf \eta > -h$ and study the
boundary-value problem \eqref{A BVP 1}--\eqref{A BVP 5} using the standard Sobolev spaces $L^2(D_\eta)^3$
and $H^1(D_\eta)^3$ together with the closed subspaces
\begin{align*}
{\mathcal X}_\eta & = \{\FF \in H^1(D_\eta)^3\!: \FF \wedge \jj |_{y=-h}= \mathbf{0},\, \udl{\FF}\cdot\nn = 0\}, \\
{\mathcal Y}_\eta & = \{\FF \in H^1(D_\eta)^3\!: \FF\cdot\jj|_{y=-h}=0,\, \FF_\parallel^\perp = {\bf 0}\}
\end{align*}
of $H^1(D_\eta)^3$. The following proposition gives an alternative description of ${\mathcal X}_\eta$
and ${\mathcal Y}_\eta$ (see Castro \& Lannes \cite[Lemma 3.3]{CastroLannes15} for the result for ${\mathcal X}_\eta$;
the result for ${\mathcal Y}_\eta$ is established in an analogous fashion).

\begin{proposition} \label{Alternative for X, Y}
The spaces ${\mathcal X}_\eta$ and ${\mathcal Y}_\eta$ coincide with respectively
\begin{align*}
& \{\FF \in L^2(D_\eta)^3\!: \curl \FF\in L^2(D_\eta)^3,\, \div \FF \in L^2(D_\eta),\,  \FF \wedge \jj |_{y=-h}= \mathbf{0},\, \udl{\FF}\cdot\nn = 0\}, \\
& \{\FF \in L^2(D_\eta)^3\!: \curl \FF\in L^2(D_\eta)^3,\, \div \FF \in L^2(D_\eta),\, \FF\cdot\jj|_{y=-h}=0,\, \FF_\parallel^\perp = {\bf 0}\},
\end{align*}
and the function
$\FF \mapsto (\|\curl \FF\|_{L^2(D_\eta)^3}^2+\|\div \FF\|_{L^2(D_\eta)}^2)^\frac{1}{2}$ is equivalent to their usual norm.
\end{proposition}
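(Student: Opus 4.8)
I would prove both set identities by establishing the two inclusions separately, the non-trivial content being the same in each case. The inclusion of $\mathcal{X}_\eta$ (resp. $\mathcal{Y}_\eta$) in the larger space is immediate, since differentiation maps $H^1(D_\eta)^3$ continuously into $L^2(D_\eta)$ and the trace operator is bounded on $H^1(D_\eta)^3$; this also gives one half of the asserted norm equivalence, namely $\|\curl\FF\|_{L^2(D_\eta)^3}^2+\|\div\FF\|_{L^2(D_\eta)}^2\leq C\|\FF\|_{H^1(D_\eta)^3}^2$. The substance of the proposition is the reverse estimate
$$\|\FF\|_{H^1(D_\eta)^3}\leq C\big(\|\curl\FF\|_{L^2(D_\eta)^3}+\|\div\FF\|_{L^2(D_\eta)}\big),\qquad \FF\in\mathcal{X}_\eta\ \text{or}\ \mathcal{Y}_\eta,$$
together with the fact that every $\FF$ in the larger space already lies in $H^1(D_\eta)^3$; once the estimate is known for smooth fields the latter follows by approximation.

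The plan is to prove a \emph{Gaffney-type inequality} directly on $D_\eta$, exploiting that $\eta\in W^{2,\infty}({\mathbb R}^2)$ with $\inf\eta>-h$ makes $S_\eta$ a $C^{1,1}$ graph whose curvature is bounded by $\|\eta\|_{W^{2,\infty}}$, and that the two boundary components $\{y=-h\}$ and $S_\eta$ are separated by a positive distance, so that no `corner' issue arises from the mixed boundary conditions. For $\FF$ smooth, decaying in $(x,z)$, and satisfying the boundary conditions defining $\mathcal{X}_\eta$ (respectively $\mathcal{Y}_\eta$), I would integrate the identity $(\grad\div\FF-\curl\curl\FF)\cdot\FF=\Delta\FF\cdot\FF$ over $D_\eta$ and integrate by parts, obtaining $\|\nabla\FF\|_{L^2(D_\eta)}^2=\|\div\FF\|_{L^2(D_\eta)}^2+\|\curl\FF\|_{L^2(D_\eta)^3}^2+\int_{\partial D_\eta}Q(\FF,\FF)$, where $Q$ is a boundary bilinear form. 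On $\{y=-h\}$ the curvature vanishes and, on $S_\eta$, after the normal derivatives of $\FF$ contained in $\div\FF$ and $\curl\FF$ are eliminated using the boundary condition (so that only tangential derivatives of $\FF$ along $S_\eta$ survive, and these combine into contractions of $\FF$ with the second fundamental form), the integrand reduces to a zeroth-order quadratic expression in $\FF$ with coefficients controlled by $\|\eta\|_{W^{2,\infty}}$; hence $|\int_{\partial D_\eta}Q(\FF,\FF)|\leq C\|\FF\|_{L^2(\partial D_\eta)^3}^2\leq\varepsilon\|\nabla\FF\|_{L^2(D_\eta)}^2+C_\varepsilon\|\FF\|_{L^2(D_\eta)^3}^2$ by the trace theorem and interpolation. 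Choosing $\varepsilon$ small and absorbing yields $\|\FF\|_{H^1(D_\eta)^3}\leq C(\|\curl\FF\|_{L^2(D_\eta)^3}+\|\div\FF\|_{L^2(D_\eta)}+\|\FF\|_{L^2(D_\eta)^3})$ for smooth $\FF$, and then for all $\FF$ in the larger space by a density argument: smooth fields satisfying the boundary conditions are dense there, which I would obtain after the flattening change of variables $\tilde y=h(y-\eta)/(h+\eta)$ by truncating and mollifying in the horizontal variables and extending across the now-flat boundaries $\tilde y=0$, $\tilde y=-h$ by the reflection compatible with each boundary condition.

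It remains to remove the term $\|\FF\|_{L^2(D_\eta)^3}$ from the right-hand side, i.e.\ to prove the Poincar\'e-type bound $\|\FF\|_{L^2(D_\eta)^3}\leq C(\|\curl\FF\|_{L^2(D_\eta)^3}+\|\div\FF\|_{L^2(D_\eta)})$ on $\mathcal{X}_\eta$ and on $\mathcal{Y}_\eta$. Here I would use that $D_\eta$ is diffeomorphic to the contractible slab $D_0={\mathbb R}^2\times(-h,0)$, so that the de Rham cohomology obstructions vanish, and that the mixed boundary conditions (tangential part prescribed on one component, normal component on the other) admit no nonzero field that is simultaneously curl-free and divergence-free: writing $\curl\FF=\mathbf{0}$ as $\FF=\grad h$, the boundary conditions force $h$ to solve a Laplace equation with a homogeneous Dirichlet condition on one boundary component and a homogeneous Neumann condition on the other, whence $h$ is constant and $\FF=\mathbf{0}$. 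On the flat model $D_0$ this can be made quantitative by a partial Fourier transform in $(x,z)$: for each horizontal frequency, including zero, the associated one-dimensional problem on $(-h,0)$ has trivial kernel and a resolvent bounded uniformly in the frequency (it amounts to a one-dimensional Poincar\'e inequality for the components that vanish at one of the endpoints), and the general case on $D_\eta$ follows by transferring through the $C^{1,1}$ diffeomorphism together with a partition of unity in $(x,z)$ whose commutator errors are lower-order and reabsorbed. Combining the Gaffney and Poincar\'e estimates gives the norm equivalence and, a posteriori, the reverse inclusions; the case of $\mathcal{Y}_\eta$ requires no new idea, since the integration-by-parts identity, the structure of the boundary form $Q$, and the absence of harmonic fields are all symmetric under interchanging the tangential and normal boundary conditions on the two components, so Castro \& Lannes's argument \cite[Lemma 3.3]{CastroLannes15} for $\mathcal{X}_\eta$ transcribes.

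The step I expect to be the main obstacle is the analysis of the boundary integral $\int_{\partial D_\eta}Q(\FF,\FF)$ in the Gaffney identity: one must verify carefully that, after the normal derivatives have been traded for tangential ones using each boundary condition, what survives is genuinely a zeroth-order quadratic form in $\FF$ on $\partial D_\eta$ — any residual tangential first derivative of $\FF$ on the boundary could only be reabsorbed at the cost of an $H^1(D_\eta)$-trace, which is exactly the quantity being estimated. A subsidiary technical point, forced by the unboundedness of $D_\eta$, is the uniformity of all constants over the base ${\mathbb R}^2$; this is built in because $\eta\in W^{2,\infty}({\mathbb R}^2)$ furnishes a uniform bound on the geometry of $S_\eta$ and its curvature.
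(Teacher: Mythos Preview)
The paper gives no proof of this proposition: it simply refers to Castro \& Lannes \cite[Lemma~3.3]{CastroLannes15} for $\mathcal{X}_\eta$ and asserts that $\mathcal{Y}_\eta$ is handled ``in an analogous fashion''. Your sketch is a faithful reconstruction of the Gaffney-inequality route that underlies that reference, and the overall architecture---the integration-by-parts identity $\|\nabla\FF\|^2=\|\div\FF\|^2+\|\curl\FF\|^2+\int_{\partial D_\eta}Q(\FF,\FF)$, reduction of $Q$ to a zeroth-order curvature form under either boundary condition, density via flattening and reflection, and the Poincar\'e step exploiting the absence of harmonic fields with mixed Dirichlet--Neumann data---is sound and matches the cited argument.

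One step is thinner than you indicate. Combining the Gaffney estimate $(1-\varepsilon)\|\nabla\FF\|^2\leq\|\div\FF\|^2+\|\curl\FF\|^2+C_\varepsilon\|\FF\|_{L^2}^2$ with a Poincar\'e bound $\|\FF\|_{L^2}\leq C'\|\nabla\FF\|$ closes only if $C_\varepsilon(C')^2<1-\varepsilon$; since the curvature constant hidden in $C_\varepsilon$ scales with $\|\eta\|_{W^{2,\infty}}$ while $C'$ is fixed by $h$ and $\|\eta\|_{W^{1,\infty}}$, this is not automatic for large $\eta$. Your proposed remedy---prove the sharp inequality on the slab $D_0$ and transfer via the flattening diffeomorphism plus a horizontal partition of unity---runs into the same circularity, because the commutators $[\chi_j,\curl]\FF=\nabla\chi_j\times\FF$ and $[\chi_j,\div]\FF=\nabla\chi_j\cdot\FF$ reintroduce $\|\FF\|_{L^2}$ with a coefficient tied to the patch scale, itself constrained by $\|\eta\|_{W^{2,\infty}}$. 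This is not fatal (Castro \& Lannes avoid it by working directly with the flattened operators $\curl^\eta$, $\div^\eta$ on $D_0$), but it---rather than the boundary-form calculation you flag---is where the real work lies.
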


A \emph{weak solution} of \eqref{A BVP 1}--\eqref{A BVP 5} is a function
$\AA \in {\mathcal X}_\eta$ such that
\begin{equation}
\int_{D_\eta} (\curl \AA\cdot\curl \BB -\alpha\, \curl \AA \cdot\BB + \div \AA\, \div \BB)
-\alpha\int_{{\mathbb R}^2} \nabla \Delta^{-1}( \nabla \cdot \AA^{\!\perp}_\parallel) \cdot \BB_\parallel
=
\int_{{\mathbb R}^2} \nabla^\perp \Phi\cdot\BB_\parallel
\label{Weak solution}
\end{equation}
for all $\BB \in {\mathcal X}_\eta$,
while a \emph{strong solution} has the additional regularity requirement that $\AA \in H^2(D_\eta)^3$, is solenoidal
and satisfies
\eqref{A BVP 1} in $L^2(D_\eta)^3$ and \eqref{A BVP 5} in $H^\frac{1}{2}({\mathbb R}^2)^2$.
The existence of weak and strong solutions is established in Lemmata \ref{Existence of w solution} and
\ref{Existence of s solution} below; both are preceded by auxiliary results (Propositions \ref{Properties of spaces}, \ref{BVP proposition 1}
and \ref{More about spaces}) necessary for their proof.

\begin{proposition} \label{Properties of spaces}
\hspace{1cm}\begin{itemize}
\item[(i)]
The function $\FF \mapsto \udl{\FF}$ is a continuous linear mapping
$H^1(D_\eta)^3 \rightarrow H^\frac{1}{2}({\mathbb R}^2)^3$ with continuous right inverse
$H^\frac{1}{2}({\mathbb R}^2)^3 \rightarrow \{\FF \in H^1(D_\eta)^3: \FF|_{y=-h}={\bf 0}\}$.
\item[(ii)]
The mapping $\FF \mapsto \FF_\parallel^\perp$ defined on ${\mathcal D}(\overline{D_\eta})^3$
extends to a continuous linear mapping $\{\FF \in L^2(D_\eta)^3: \curl \FF \in L^2(D_\eta)^3\} \rightarrow H^{-\frac{1}{2}}({\mathbb R}^2)^2$,
where the former space is equipped with the norm $\FF \mapsto (\|\FF\|_{L^2(D_\eta)^3}^2+\|\curl \FF\|_{L^2(D_\eta)^3}^2)^\frac{1}{2}$.
\end{itemize}
\end{proposition}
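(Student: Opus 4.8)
The plan is to reduce part~(i) to a classical trace estimate on the flat slab $D_0=\{(x,\tilde y,z):-h<\tilde y<0\}$ by means of the change of variables $\tilde y=h(y-\eta)/(h+\eta)$ used above, and then to deduce part~(ii) from part~(i) by a duality argument resting on Green's identity. Because $\eta\in W^{2,\infty}({\mathbb R}^2)$ with $\inf\eta>-h$, the map $\phi\colon D_\eta\to D_0$, $\phi(x,y,z)=(x,\tilde y,z)$, is a bi-Lipschitz diffeomorphism whose Jacobian is bounded above and below in absolute value; hence $\FF\mapsto\tilde\FF:=\FF\circ\phi^{-1}$ is an isomorphism $H^1(D_\eta)^3\to H^1(D_0)^3$ with equivalent norms, it maps $\{\FF\in H^1(D_\eta)^3:\FF|_{y=-h}={\bf 0}\}$ onto $\{\tilde\FF\in H^1(D_0)^3:\tilde\FF|_{\tilde y=-h}={\bf 0}\}$, and $\udl\FF$, regarded as a function of the horizontal variables, coincides with $\tilde\FF|_{\tilde y=0}$.

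For part~(i) the continuity of the trace $\tilde\FF\mapsto\tilde\FF|_{\tilde y=0}$ from $H^1(D_0)^3$ into $H^{1/2}({\mathbb R}^2)^3$ is the standard slab trace theorem, and composing with the above isomorphism yields the first assertion. For the right inverse I would, given $g\in H^{1/2}({\mathbb R}^2)^3$, take the half-space lifting $W$ determined by $\widehat W(\kk,\tilde y)=\widehat g(\kk)\,\e^{\tilde y\langle\kk\rangle}$ with $\langle\kk\rangle=(1+|\kk|^2)^{1/2}$, which lies in $H^1({\mathbb R}^2\times(-\infty,0))^3$ with $\|W\|_{H^1}=\|g\|_{H^{1/2}({\mathbb R}^2)^3}$ and $W|_{\tilde y=0}=g$, fix a cut-off $\chi\in C^\infty({\mathbb R})$ equal to $1$ near $\tilde y=0$ and vanishing near $\tilde y=-h$, and set $E_0 g=(\chi W)|_{D_0}$; then $E_0 g\in H^1(D_0)^3$ with $\|E_0 g\|_{H^1(D_0)^3}\le C\|g\|_{H^{1/2}({\mathbb R}^2)^3}$, $E_0 g|_{\tilde y=-h}={\bf 0}$ and $E_0 g|_{\tilde y=0}=g$. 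The composition $E g:=E_0 g\circ\phi$ is then the required right inverse.

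For part~(ii) I would first recall that $\mathcal{D}(\overline{D_\eta})^3$ is dense in $\{\FF\in L^2(D_\eta)^3:\curl\FF\in L^2(D_\eta)^3\}$ for the graph norm: density of the smooth fields $C^\infty(\overline{D_\eta})^3$ with $\curl\in L^2$ is the classical statement for Lipschitz domains, and one passes to compactly supported fields by truncating with a horizontal cut-off $\chi(\,\cdot\,/R)$, using that $\curl(\chi(\,\cdot\,/R)\FF)-\chi(\,\cdot\,/R)\curl\FF=R^{-1}(\nabla\chi)(\,\cdot\,/R)\wedge\FF\to{\bf 0}$ in $L^2$. It then suffices to estimate $\FF\mapsto\FF_\parallel^\perp$ on such smooth, compactly supported $\FF$. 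Given such an $\FF$ and $\gg=(g_1,g_3)^\mathrm{T}\in H^{1/2}({\mathbb R}^2)^2$, apply the right inverse of part~(i) to $(g_1,0,g_3)^\mathrm{T}\in H^{1/2}({\mathbb R}^2)^3$ to obtain $\GG\in H^1(D_\eta)^3$ with $\GG|_{y=-h}={\bf 0}$, $\udl\GG=(g_1,0,g_3)^\mathrm{T}$ (so that $\GG_\parallel=\gg$) and $\|\GG\|_{H^1(D_\eta)^3}\le C\|\gg\|_{H^{1/2}({\mathbb R}^2)^2}$. Integrating the identity $\div(\FF\wedge\GG)=\curl\FF\cdot\GG-\FF\cdot\curl\GG$ over $D_\eta$ and using $\GG|_{y=-h}={\bf 0}$ together with $(\udl\FF\wedge\NN)\cdot\udl\GG=\FF_\parallel^\perp\cdot\GG_\parallel$ gives
$$\int_{{\mathbb R}^2}\FF_\parallel^\perp\cdot\gg=\int_{D_\eta}\big(\FF\cdot\curl\GG-\curl\FF\cdot\GG\big),$$
whence, by the Cauchy--Schwarz inequality and the bound on $\GG$,
\begin{align*}
\Big|\int_{{\mathbb R}^2}\FF_\parallel^\perp\cdot\gg\Big|
&\le\big(\|\FF\|_{L^2(D_\eta)^3}^2+\|\curl\FF\|_{L^2(D_\eta)^3}^2\big)^{\frac12}\big(\|\GG\|_{L^2(D_\eta)^3}^2+\|\curl\GG\|_{L^2(D_\eta)^3}^2\big)^{\frac12}\\
&\le C\big(\|\FF\|_{L^2(D_\eta)^3}^2+\|\curl\FF\|_{L^2(D_\eta)^3}^2\big)^{\frac12}\|\gg\|_{H^{1/2}({\mathbb R}^2)^2}.
\end{align*}
By duality this shows that $\FF_\parallel^\perp\in H^{-1/2}({\mathbb R}^2)^2$ with norm controlled by the graph norm of $\FF$, and the mapping extends continuously to the whole space by density.

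The routine ingredients are the slab trace theorem with its right inverse, the transformation rules under the bi-Lipschitz flattening $\phi$, and the density of smooth fields in $\{\FF\in L^2(D_\eta)^3:\curl\FF\in L^2(D_\eta)^3\}$. The step requiring the most care is the passage to the boundary in part~(ii): one must apply Green's identity with $\FF$ genuinely smooth up to $\overline{D_\eta}$ and compactly supported — which is why the estimate is established first on that dense subspace — and check that the boundary contribution on $\{y=-h\}$ vanishes thanks to the vanishing trace of the test field $\GG$, so that the surface term reduces exactly to the pairing $\int_{{\mathbb R}^2}\FF_\parallel^\perp\cdot\gg$, identified via the identity $(\udl\FF\wedge\NN)\cdot\udl\GG=\FF_\parallel^\perp\cdot\GG_\parallel$.
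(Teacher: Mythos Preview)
Your argument is correct and follows essentially the same route as the paper's proof: part~(i) is reduced via the flattening $\tilde y=h(y-\eta)/(h+\eta)$ to the standard trace theorem on the slab $D_0$, and part~(ii) is obtained by the duality argument based on the Green-type identity $\int_{D_\eta}(\FF\cdot\curl\GG-\curl\FF\cdot\GG)=\int_{{\mathbb R}^2}\FF_\parallel^\perp\cdot\GG_\parallel$ for $\FF\in\mathcal{D}(\overline{D_\eta})^3$ and $\GG\in H^1(D_\eta)^3$ with $\GG|_{y=-h}=\mathbf{0}$ (the paper simply cites Dautray \& Lions for this step). Your version is more explicit --- you construct the lifting in~(i) by hand and spell out the density and duality estimates in~(ii) --- but the underlying ideas are the same.
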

\begin{proof}
Assertion (i) follows from the corresponding result for $\tilde{\FF}: D_0 \rightarrow {\mathbb R}^3$, the fact that
$\udl{\FF}=\tilde{\FF}|_{\tilde{y}=0}$ and the estimates
$\|\tilde{\FF}\|_{H^1(D_0)} \lesssim \|\FF\|_{H^1(D_\eta)}$,
$\|\FF\|_{H^1(D_\eta)} \lesssim \|\tilde{\FF}\|_{H^1(D_0)}$, while
(ii) is obtained by a standard argument from the identity
$$\int_{D_\eta} (\FF\cdot\curl\GG-\curl\FF\cdot\GG) = \int_{{\mathbb R}^2} \FF_\parallel^\perp\cdot\GG_\parallel,
\qquad \FF \in {\mathcal D}(\overline{D_\eta}),\ \GG \in H^1(D_\eta), \GG|_{y=-h}=0$$
(see Dautray \& Lions \cite[p.\ 207]{DautrayLions3}).
\end{proof}

The proof of the next proposition has been given by Lannes \cite[ch.\ 2]{Lannes}.

\begin{proposition} \label{BVP proposition 1}
 The boundary-value problem
\begin{align*}
\Delta u &=G && \hspace{-1.5cm}\mbox{in $D_\eta$,} \\
\partial_n u & = 0 && \hspace{-1.5cm}\mbox{at $y=\eta$,} \\
u &=0 &&\hspace{-1.5cm} \mbox{at $y=-h$}
\end{align*}
has a unique solution $u \in H^2(D_\eta)$ for each $G \in L^2(D_\eta)$.
\end{proposition}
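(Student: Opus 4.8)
The plan is the classical route for a mixed Dirichlet--Neumann problem: a variational formulation solved by Lax--Milgram, followed by elliptic regularity to promote the weak solution to $H^2(D_\eta)$.

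First I would work in the Hilbert space $V:=\{u\in H^1(D_\eta):u|_{y=-h}=0\}$ and seek a \emph{weak solution}, namely $u\in V$ with
$$\int_{D_\eta}\nabla u\cdot\nabla v=-\int_{D_\eta}Gv\qquad\text{for all }v\in V;$$
the Neumann condition $\partial_n u=0$ at $y=\eta$ is the natural boundary condition for this formulation and is not imposed on $V$. The bilinear form $a(u,v)=\int_{D_\eta}\nabla u\cdot\nabla v$ is bounded on $V$, and it is coercive because of a Poincar\'e-type inequality: since $u$ vanishes on $\{y=-h\}$ and the vertical extent of $D_\eta$ is at most $h+\|\eta\|_{L^\infty}$, integrating $\partial_y u$ in $y$ from the bottom gives $\|u\|_{L^2(D_\eta)}\le C(h,\|\eta\|_{L^\infty})\|\partial_y u\|_{L^2(D_\eta)}$, whence $\|u\|_{H^1(D_\eta)}^2\lesssim a(u,u)$. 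The same inequality shows that $v\mapsto-\int_{D_\eta}Gv$ is a bounded functional on $V$ (this is where $G\in L^2(D_\eta)$ enters). Lax--Milgram then yields a unique weak solution $u\in V$, and uniqueness of the eventual $H^2$ solution is inherited from coercivity.

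It remains to show $u\in H^2(D_\eta)$. Interior $H^2$ estimates and $H^2$-regularity up to the flat Dirichlet boundary $\{y=-h\}$ are standard; the delicate point is regularity up to the free surface $\{y=\eta\}$, which is only $W^{2,\infty}$ and carries a Neumann condition. Here I would flatten the domain by setting $\tilde y=h(y-\eta)/(h+\eta)$, so that $\tilde u(x,\tilde y,z)=u(x,y,z)$ satisfies $\Delta^\eta\tilde u=\tilde G$ on the fixed strip $D_0$ together with $\tilde u|_{\tilde y=-h}=0$ and the transformed conormal condition on $\{\tilde y=0\}$. From the explicit formula for $\Delta^\eta$ recorded above, the coefficients of its second-order part are built from $\eta_x,\eta_z\in W^{1,\infty}({\mathbb R}^2)$ and the bounded functions $K_1^\eta,K_2^\eta,K_3^\eta$, hence are Lipschitz, while the only remaining lower-order coefficient (the multiple of $\tilde u_y$ involving $\eta_{xx}+\eta_{zz}$) lies in $L^\infty$; moreover a short computation shows the coefficient of $\tilde u_{yy}$ equals $h^2/(h+\eta)^2+(K_1^\eta)^2(\eta_x^2+\eta_z^2)$, which is bounded below by $h^2/(h+\|\eta\|_{L^\infty})^2>0$ by virtue of $\inf\eta>-h$, so the operator is uniformly elliptic. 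This is exactly the regularity at which the standard tangential difference-quotient argument applies: it controls all second derivatives except $\tilde u_{\tilde y\tilde y}$ near $\{\tilde y=0\}$, and the latter is then recovered algebraically from the equation. The estimates are uniform in the horizontal variables, the difference quotients being taken in the $x,z$ directions in which $D_0$ is translation-invariant and the coefficients are globally bounded, so $\tilde u\in H^2(D_0)$ and hence $u\in H^2(D_\eta)$; by the trace theorem $\partial_n u\in H^{1/2}({\mathbb R}^2)$ and the Neumann condition, already known weakly, holds there.

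The main obstacle is precisely this last step: the combination of the unbounded horizontal extent of $D_\eta$ with the limited $W^{2,\infty}$ smoothness of $\eta$ and the Neumann condition on the free surface. Flattening resolves both at once --- it replaces $D_\eta$ by the fixed strip $D_0$, so horizontal unboundedness is immaterial for the translation-invariant difference-quotient estimates, and it displays the transformed operator as uniformly elliptic with Lipschitz principal part, which is the sharp threshold for boundary $H^2$ estimates.
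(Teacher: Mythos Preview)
Your argument is correct and is precisely the standard route: Lax--Milgram in the space $\{u\in H^1(D_\eta):u|_{y=-h}=0\}$ with coercivity from the vertical Poincar\'e inequality, followed by flattening to $D_0$ and tangential difference quotients, exploiting that $\eta\in W^{2,\infty}$ gives Lipschitz principal coefficients (and your computation of the $\tilde u_{\tilde y\tilde y}$ coefficient as $h^2/(h+\eta)^2+(K_1^\eta)^2|\nabla\eta|^2$ is right). The paper does not supply its own proof of this proposition but simply cites Lannes \cite[ch.~2]{Lannes}, where exactly this approach is carried out in detail; so your proposal is in effect a faithful sketch of the reference the authors invoke.
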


\begin{lemma} \label{Existence of w solution}
For all sufficiently small values of $|\alpha|$ the
boundary-value problem \eqref{A BVP 1}--\eqref{A BVP 5} admits a unique weak solution
for each $\Phi \in \dot{H}^\frac{1}{2}({\mathbb R}^2)$. The weak solution is solenoidal and
satisfies \eqref{A BVP 1} in the sense of distributions and \eqref{A BVP 5} in $H^{-\frac{1}{2}}({\mathbb R}^2)^2$.
\end{lemma}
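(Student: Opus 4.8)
The plan is to set up a bilinear form on ${\mathcal X}_\eta$ associated with the weak formulation \eqref{Weak solution}, isolate its leading part (the $\alpha$-independent terms), show that this leading part is coercive on ${\mathcal X}_\eta$ by means of Proposition \ref{Alternative for X, Y}, and then absorb the $\alpha$-dependent remainder by a perturbation/Lax–Milgram argument valid for $|\alpha|$ small. First I would define $a_\alpha(\AA,\BB)$ to be the left-hand side of \eqref{Weak solution} and $\ell(\BB)=\int_{{\mathbb R}^2}\nabla^\perp\Phi\cdot\BB_\parallel$ the right-hand side. The functional $\ell$ is bounded on ${\mathcal X}_\eta$: since $\BB_\parallel=\udl{\BB}_{\mathrm h}+\udl{B}_2\nabla\eta$ and $\eta\in W^{2,\infty}$, the trace estimate of Proposition \ref{Properties of spaces}(i) gives $\|\BB_\parallel\|_{H^{1/2}({\mathbb R}^2)^2}\lesssim\|\BB\|_{H^1(D_\eta)^3}$, while $\nabla^\perp\Phi\in H^{-1/2}({\mathbb R}^2)^2$ because $\Phi\in\dot H^{1/2}({\mathbb R}^2)$; hence $|\ell(\BB)|\lesssim\|\Phi\|_{\dot H^{1/2}}\|\BB\|_{H^1(D_\eta)^3}$. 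For the same reasons and using the continuity of $\ff\mapsto\nabla\Delta^{-1}(\nabla\cdot\ff)$ from Proposition (the Hodge–Weyl mapping result), $a_\alpha$ is bounded on ${\mathcal X}_\eta\times{\mathcal X}_\eta$.

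The core step is coercivity of the principal part $a_0(\AA,\AA)=\int_{D_\eta}(|\curl\AA|^2+|\div\AA|^2)$. By Proposition \ref{Alternative for X, Y}, the map $\AA\mapsto(\|\curl\AA\|_{L^2}^2+\|\div\AA\|_{L^2}^2)^{1/2}$ is an equivalent norm on ${\mathcal X}_\eta$, so $a_0(\AA,\AA)\gtrsim\|\AA\|_{H^1(D_\eta)^3}^2$ — this is precisely where the geometric/Sobolev input is used, and it is the reason for introducing ${\mathcal X}_\eta$ in the first place. Then I would estimate the remaining terms of $a_\alpha-a_0$, namely $-\alpha\int_{D_\eta}\curl\AA\cdot\BB$ and $-\alpha\int_{{\mathbb R}^2}\nabla\Delta^{-1}(\nabla\cdot\AA^{\!\perp}_\parallel)\cdot\BB_\parallel$; both are bounded by $C|\alpha|\,\|\AA\|_{H^1(D_\eta)^3}\|\BB\|_{H^1(D_\eta)^3}$ with $C$ depending only on $\eta$ (again via the trace and Hodge–Weyl bounds). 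Choosing $|\alpha|$ so small that $C|\alpha|$ is less than the coercivity constant of $a_0$ makes $a_\alpha$ coercive on ${\mathcal X}_\eta$; the Lax–Milgram theorem then yields a unique $\AA\in{\mathcal X}_\eta$ satisfying \eqref{Weak solution} for all $\BB\in{\mathcal X}_\eta$, and linear dependence on $\Phi$ follows from uniqueness.

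It remains to extract the stated regularity: that the weak solution is solenoidal, satisfies \eqref{A BVP 1} distributionally, and satisfies \eqref{A BVP 5} in $H^{-1/2}({\mathbb R}^2)^2$. For the divergence, I would test \eqref{Weak solution} with $\BB=\grad u$ for $u$ the solution, furnished by Proposition \ref{BVP proposition 1}, of $\Delta u=\div\AA$ in $D_\eta$ with $\partial_n u=0$ at $y=\eta$ and $u=0$ at $y=-h$; such $\grad u$ lies in ${\mathcal X}_\eta$ (its tangential trace vanishes by the boundary conditions on $u$, using $(\grad u)_\parallel=\nabla\udl u$ from Proposition (v) and $\udl u$-related identities, and $\curl\grad u=0$), and the curl and Hodge–Weyl terms in $a_\alpha$ drop out, leaving $\int_{D_\eta}|\div\AA|^2=0$, so $\div\AA=0$. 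With $\AA$ solenoidal, testing against $\BB\in{\mathcal D}(D_\eta)^3$ gives $\langle\curl\curl\AA-\alpha\curl\AA,\BB\rangle=0$, i.e. \eqref{A BVP 1} in the sense of distributions. Finally, for \eqref{A BVP 5} I would integrate by parts in \eqref{Weak solution} using the identity $\int_{D_\eta}(\FF\cdot\curl\GG-\curl\FF\cdot\GG)=\int_{{\mathbb R}^2}\FF_\parallel^\perp\cdot\GG_\parallel$ from Proposition \ref{Properties of spaces}(ii) (with $\FF=\curl\AA$, $\GG=\BB$), together with $\curl\curl\AA=\alpha\curl\AA$ already established, to reduce the identity to $\int_{{\mathbb R}^2}\big((\curl\AA)_\parallel^\perp-\nabla^\perp\Phi+\alpha\,(\text{something})^\perp\big)\cdot\BB_\parallel=0$ for all admissible $\BB_\parallel$; matching with $\nabla\cdot(\cdot)^\perp$ and using surjectivity of $\BB\mapsto\BB_\parallel$ onto a dense subspace of $H^{1/2}({\mathbb R}^2)^2$ then gives \eqref{A BVP 5} in $H^{-1/2}({\mathbb R}^2)^2$. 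The main obstacle I anticipate is the last point — verifying carefully that $\grad u$ (and more generally the test fields used to peel off boundary conditions) genuinely lies in ${\mathcal X}_\eta$, and that the trace identities of Proposition \ref{Properties of spaces}(ii) apply to $\curl\AA$, which a priori only has $L^2$ curl; everything else is a routine Lax–Milgram estimate once the equivalent-norm statement of Proposition \ref{Alternative for X, Y} is in hand.
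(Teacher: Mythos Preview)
Your overall strategy matches the paper's: Lax--Milgram via the equivalent norm of Proposition~\ref{Alternative for X, Y}, then solenoidality by testing with $\BB=\grad u$ from Proposition~\ref{BVP proposition 1}, then the interior equation and boundary condition by integration by parts. The existence/uniqueness part is correct as written.

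There is, however, a genuine gap in the solenoidality step. You claim that with $\BB=\grad u$ ``the curl and Hodge--Weyl terms in $a_\alpha$ drop out, leaving $\int_{D_\eta}|\div\AA|^2=0$.'' Only the term $\int_{D_\eta}\curl\AA\cdot\curl\BB$ drops out (since $\curl\grad u=0$). The cross term $-\alpha\int_{D_\eta}\curl\AA\cdot\grad u$ and the boundary term $-\alpha\int_{{\mathbb R}^2}\nabla\Delta^{-1}(\nabla\cdot\AA^{\!\perp}_\parallel)\cdot\nabla\udl{u}$ do \emph{not} vanish individually: each is generically nonzero. What actually happens is that they \emph{cancel}, and this cancellation is the entire content of the step. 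Integrating by parts (using $\div\curl\AA=0$ and $u|_{y=-h}=0$) gives
\[
\alpha\!\int_{D_\eta}\!\curl\AA\cdot\grad u
=\alpha\!\int_{{\mathbb R}^2}\!\udl{\curl\AA}\cdot\NN\,\udl{u}
=\alpha\!\int_{{\mathbb R}^2}\!(\nabla\cdot\AA^{\!\perp}_\parallel)\,\udl{u}
=-\alpha\!\int_{{\mathbb R}^2}\!\nabla\Delta^{-1}(\nabla\cdot\AA^{\!\perp}_\parallel)\cdot\nabla\udl{u},
\]
the last equality holding because the $\nabla^\perp$-component of $\AA^{\!\perp}_\parallel$ integrates to zero against the gradient $\nabla\udl{u}$. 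This is precisely the computation the paper carries out. Without it you are left with $\int_{D_\eta}(\div\AA)^2$ equal to an $\alpha$-dependent, a priori nonzero quantity, and no conclusion follows.

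Incidentally, the obstacle you flag at the end --- that $\grad u$ lies in ${\mathcal X}_\eta$ --- is not a difficulty: $\partial_n u|_{y=\eta}=0$ gives $\udl{\grad u}\cdot\nn=0$, and $u|_{y=-h}=0$ gives $(\grad u)\wedge\jj|_{y=-h}={\bf 0}$ directly. The real obstacle is the cancellation above.
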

\begin{proof}
The estimates
\begin{align*}
\left| \int_{D_\eta} \curl \AA \cdot\BB \right| & \lesssim \|\AA\|_{H^1(D_\eta)^3} \|\BB\|_{H^1(D_\eta)^3}, \\
\left| \int_{{\mathbb R}^2} \nabla \Delta^{-1}( \nabla \cdot \AA^{\!\perp}_\parallel) \cdot \BB_\parallel \right|
& \lesssim \|\AA_\parallel\|_0 \|\BB_\parallel\|_0
\lesssim \|\udl{\AA}\|_{\frac{1}{2}} \|\udl{\BB}\|_{\frac{1}{2}}
\lesssim   \|\AA\|_{H^1(D_\eta)^3} \|\BB\|_{H^1(D_\eta)^3}
\end{align*}
and Proposition \ref{Alternative for X, Y} imply that for sufficiently small values of $|\alpha|$
the left-hand side of \eqref{Weak solution}
is a continuous, coercive, bilinear form $\mathcal{X}_\eta \times \mathcal{X}_\eta \rightarrow {\mathbb R}$,
while the estimate
$$
\left| \int_{{\mathbb R}^2} \nabla^\perp \Phi \cdot \BB_\parallel \right| \lesssim
\|\nabla^\perp \Phi\|_{-\frac{1}{2}}\|\udl{\BB}\|_{\frac{1}{2}} \lesssim
 \|\Phi\|_{\dot{H}^\frac{1}{2}({\mathbb R}^2)} \|\BB\|_{H^1(D_\eta)^3}
$$
shows that its right-hand side is a continuous, bilinear form $\dot{H}^\frac{1}{2}({\mathbb R}^2)^2 \times {\mathcal X}_\eta
\rightarrow {\mathbb R}$ (note that Proposition \ref{Properties of spaces}(i) has been used in both steps).
The existence of a unique solution $\AA \in {\mathcal X}_\eta$
now follows from the Lax-Milgram lemma.

Let $\phi_\AA \in H^2(D_\eta)$ be the unique function satisfying
$\Delta \phi_\AA = \div \AA$ in $D_\eta$ with boundary conditions
$\partial_n \phi_\AA|_{y=\eta} =0$, $\phi_\AA|_{y=-h}=0$ (see Proposition \ref{BVP proposition 1}).
It follows that $\BB=\grad \phi_\AA \in {\mathcal X}_\eta$ and hence
$$
\int_{D_\eta} \big(-\alpha\, \curl \AA \cdot \grad \phi_\AA + (\div \AA)^2\big)
-\alpha\int_{{\mathbb R}^2} \nabla \Delta^{-1}( \nabla \cdot \AA^{\!\perp}_\parallel) \cdot \nabla (\udl{\phi_\AA}) \\
=
0
$$
(because $\BB_\parallel = \nabla ( \udl{\phi_\AA})$, which is orthogonal
to $\nabla^\perp \Phi$). Since
$$
\alpha\!\int_{D_\eta} \!\!\curl \AA \cdot \grad \phi_\AA
= \alpha\!\int_{{\mathbb R}^2} \udl{\curl \AA} \cdot \NN\, \udl{\phi_\AA}
=\alpha\!\int_{{\mathbb R}^2} \!\nabla \cdot \AA^{\!\perp}_\parallel\, \udl{\phi_\AA}
= -\alpha\! \int_{{\mathbb R}^2} \nabla \Delta^{-1}(\nabla \cdot \AA^{\!\perp}_\parallel)\cdot \nabla (\udl{\phi_\AA}),
$$
one concludes that $\div \AA=0$.

Choosing $\BB \in {\mathcal D}(D_\eta)^3$, one finds that $\AA$ solves \eqref{A BVP 1}
in the sense of distributions and hence that $\curl \curl \AA \in L^2(D_\eta)^3$. It follows that
$(\curl \AA)_\parallel^\perp \in H^{-\frac{1}{2}}({\mathbb R}^2)^2$ (Proposition \ref{Properties of spaces}(ii)) and
$$
\int_{D_\eta} (\curl \curl \AA -\alpha\, \curl \AA) \cdot\BB
+\int_{{\mathbb R}^2} \left((\curl \AA)^\perp_\parallel -\nabla^\perp \Phi
-\alpha\nabla \Delta^{-1}( \nabla \cdot \AA^{\!\perp}_\parallel)\right) \cdot \BB_\parallel
=0.
$$
One concludes that \eqref{A BVP 5} holds in $H^{-\frac{1}{2}}({\mathbb R}^2)^2$.
\end{proof}

\begin{proposition} \label{More about spaces}
\hspace{1cm}
\begin{itemize}
\item[(i)]
The spaces
$$\{\FF \!\in\! L^2(D_\eta)^3\!: \curl \FF\!\in\! L^2(D_\eta)^3,\, \div \FF \!\in\! L^2(D_\eta),\, \FF\wedge \jj\big|_{y=-h} \!\in\! H^\frac{1}{2}({\mathbb R}^2)^3,\, \udl{\FF}.\NN \!\in\! H^\frac{1}{2}({\mathbb R}^2)\}$$
and
$$\{\FF \!\in\! L^2(D_\eta)^3\!: \curl \FF\!\in\! L^2(D_\eta)^3,\, \div \FF \!\in\! L^2(D_\eta),\, \FF\cdot\jj\big|_{y=-h} \!\in\! H^\frac{1}{2}({\mathbb R}^2),\, \FF_\parallel^\perp \!\in\! H^\frac{1}{2}({\mathbb R}^2)^2\}$$
coincide with $H^1(D_\eta)^3$.
\item[(ii)]
The space
$$\{\FF \in L^2(D_\eta)^3\!: \curl \FF\in H^1(D_\eta)^3,\, \div \FF \in H^1(D_\eta),\, \FF \wedge \jj \big|_{y=-h}= \mathbf{0},\, \udl{\FF}\cdot\nn = 0\}$$
coincides with $\{\FF \in H^2(D_\eta)^3: \FF \wedge \jj \big|_{y=-h}= \mathbf{0},\, \udl{\FF}\cdot\nn = 0\}$.
\end{itemize}
\end{proposition}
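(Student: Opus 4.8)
The plan is to read both parts as div--curl elliptic-regularity statements: in each case one inclusion is immediate from trace theory, and the reverse one is reduced either to Proposition~\ref{Alternative for X, Y} (for (i)) or to a localised elliptic estimate after flattening the free surface (for (ii)).

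First the easy inclusions. If $\FF\in H^1(D_\eta)^3$ then $\curl\FF,\div\FF\in L^2(D_\eta)$ by inspection; by Proposition~\ref{Properties of spaces}(i) the trace $\udl\FF$ lies in $H^\frac12({\mathbb R}^2)^3$, and since $\eta\in W^{2,\infty}({\mathbb R}^2)$ the components of $\NN$ and of $\nabla\eta$ lie in $W^{1,\infty}({\mathbb R}^2)$, which multiplies $H^\frac12({\mathbb R}^2)$ into itself; hence $\udl\FF\cdot\NN$, $\FF_\parallel^\perp$, $\FF\wedge\jj|_{y=-h}$ and $\FF\cdot\jj|_{y=-h}$ all lie in $H^\frac12$, so $H^1(D_\eta)^3$ is contained in both spaces of (i). Likewise, if $\FF\in H^2(D_\eta)^3$ then $\curl\FF$ and $\div\FF$ are combinations of first derivatives of the components of $\FF$ and so belong to $H^1(D_\eta)$, while the boundary conditions are inherited; thus the $H^2$-subspace in (ii) is contained in the space described there. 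It remains to prove the reverse inclusions.

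For the reverse inclusion in (i) I would subtract off an $H^1$-lift of the boundary data. Applying the continuous right inverse of $\FF\mapsto\udl\FF$ from Proposition~\ref{Properties of spaces}(i) to $(\udl\FF\cdot\NN)\,\NN/|\NN|^2\in H^\frac12({\mathbb R}^2)^3$, and adding a standard trace lift from the flat bottom supported near $\{y=-h\}$ (so as not to disturb the top trace), one obtains $\GG\in H^1(D_\eta)^3$ with $\udl\GG\cdot\NN=\udl\FF\cdot\NN$ and $\GG\wedge\jj|_{y=-h}=\FF\wedge\jj|_{y=-h}$; an entirely analogous construction handles the second space. Replacing $\FF$ by $\FF-\GG$, it then suffices to show that an $L^2$ field with $\curl\FF,\div\FF\in L^2(D_\eta)$ satisfying the \emph{homogeneous} boundary conditions --- that is, a member of $\mathcal X_\eta$ or $\mathcal Y_\eta$ in the sense of the alternative description --- lies in $H^1(D_\eta)^3$; this is exactly Proposition~\ref{Alternative for X, Y}, and (i) follows. (One may instead prove the homogeneous statement directly: flatten $D_\eta$ to the strip $D_0$, where the formulae of Section~\ref{sec:introduction} show that $\curl^\eta$ and $\div^\eta$ differ from $\curl$ and $\div$ only by first-order terms with $W^{1,\infty}$ coefficients, take the partial Fourier transform in the horizontal variables, and for each horizontal wavevector $\kk$ read off $\|\partial_{\tilde y}\hat{\tilde\FF}\|_{L^2_{\tilde y}}$ and $\|\,|\kk|\hat{\tilde\FF}\|_{L^2_{\tilde y}}$ from the resulting first-order ODE system and the boundary values.)

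For (ii) I would use elliptic regularity for the vector Laplacian. Since $\curl\FF,\div\FF\in H^1(D_\eta)$, the identity $-\Delta\FF=\curl\curl\FF-\grad\div\FF$ gives $\Delta\FF\in L^2(D_\eta)^3$, and $\FF\in H^1(D_\eta)^3$ by Proposition~\ref{Alternative for X, Y}. After flattening the free surface to $\{\tilde y=0\}$, the conditions $\FF\wedge\jj|_{y=-h}=\mathbf 0$ and $\udl\FF\cdot\nn=0$ become respectively a homogeneous tangential (``perfect conductor'') condition at $\tilde y=-h$ and a homogeneous linear condition with $W^{1,\infty}$ coefficients on $\tilde\FF|_{\tilde y=0}$ (``insulator''), each a covering boundary condition for the div--curl system; since the two boundary components are disjoint one localises with a partition of unity subordinate to a uniformly locally finite cover of $D_0$ --- interior regularity away from the boundary, the perfect-conductor estimate near $\tilde y=-h$, the insulator estimate near $\tilde y=0$, all with the Lipschitz coefficients supplied by the flattening (which is where $\eta\in W^{2,\infty}$ is used, making the principal coefficients of $\Delta^\eta$ Lipschitz) --- and patches and sums the local estimates to conclude $\tilde\FF\in H^2(D_0)^3$, hence $\FF\in H^2(D_\eta)^3$. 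The main obstacle is this core div--curl estimate under the mixed perfect-conductor/insulator conditions, carried out uniformly on the infinite slab; in particular a naive bootstrap --- differentiating horizontally and re-applying (i) --- fails, since commuting a horizontal derivative through the curved top condition produces a term $\udl\FF\cdot(-\eta_{xx},0,-\eta_{xz})$ lying only in $L^2$ and not in $H^\frac12$, so the extra derivative must instead be carried inside the ODE-in-$\tilde y$ estimate, where the badly-behaved coefficients multiply only lower-order terms; keeping track of these $W^{1,\infty}$-coefficient commutators is the essential point.
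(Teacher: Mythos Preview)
For part~(i) your argument is essentially the paper's: subtract an $H^1$-lift of the inhomogeneous boundary data and reduce to the homogeneous case covered by Proposition~\ref{Alternative for X, Y}. The paper's specific lift differs only cosmetically---it absorbs the normal trace $\udl\FF\cdot\nn$ into a gradient field $\grad\phi$ with $\phi\in H^2(D_\eta)$ and $\partial_n\phi|_{y=\eta}=\udl\FF\cdot\nn$, and then corrects the bottom tangential data with a second $H^1$ field vanishing at the top---but the strategy is identical.

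For part~(ii) the two routes genuinely diverge. The paper disposes of it in a single sentence: ``this result follows by applying the results in part~(i) to the derivatives of $\FF$.'' Your alternative---observe $-\Delta\FF=\curl\curl\FF-\grad\div\FF\in L^2$, flatten, and run elliptic regularity for the div--curl system under the covering perfect-conductor/insulator boundary conditions, patched via a uniformly locally finite partition of unity---is correct and is explicit about where the $W^{2,\infty}$ regularity of $\eta$ is spent. Your objection to the naive bootstrap is well taken: differentiating $\udl\FF\cdot\NN=0$ horizontally throws off a term $\udl\FF\cdot\partial_x\NN$ involving $\eta_{xx}\in L^\infty$, and $L^\infty$ multiplication does not preserve $H^{1/2}$, so one cannot simply feed $\partial_x\FF$ back into (i). The paper's sentence does not address this; the charitable reading is that it is invoking the higher-order analogue of the Castro--Lannes estimate underlying Proposition~\ref{Alternative for X, Y} (an $H^1\to H^2$ gain under homogeneous boundary data with $\eta\in W^{2,\infty}$) rather than a literal differentiate-and-reapply. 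Your approach buys self-containedness at the price of length; the paper's buys brevity at the price of leaving a nontrivial step to the reader.
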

\begin{proof}
(i) Comparing the Sobolev-Slobodeckij norms for the two spaces (see Adams \cite[\S7.48]{Adams}) shows that
$\udl{\FF}.\nn \in H^\frac{1}{2}(S_\eta)$ if and only if $\udl{\FF}.\NN \in H^\frac{1}{2}({\mathbb R}^2)$ and
$\udl{\FF} \wedge \nn \in H^\frac{1}{2}(S_\eta)^3$ if and only if $\udl{\FF} \wedge \NN \in H^\frac{1}{2}({\mathbb R}^2)^3$;
since $(\FF \wedge \NN)_\mathrm{h} = \FF_\parallel^\perp$ and $(\FF \wedge \NN)_2 = \FF_\parallel^\perp\cdot\nabla \eta$
it follows that $\udl{\FF} \wedge \nn \in H^\frac{1}{2}(S_\eta)^3$ if and only if $\FF_\parallel^\perp \in H^\frac{1}{2}({\mathbb R}^2)$.

The given spaces obviously contain $H^1(D_\eta)^3$; our task to establish the reverse inclusions.
Suppose that $\FF \in L^2(D_\eta)^3$ satisfies $\curl \FF\in L^2(D_\eta)^3$,
$\div \FF \in L^2(D_\eta)$,
$\FF\wedge\jj\big|_{y=-h} \in H^\frac{1}{2}({\mathbb R}^2)^2$ and
$\udl{\FF}.\NN \in H^\frac{1}{2}({\mathbb R}^2)$, so that
$\udl{\FF}.\nn \in H^\frac{1}{2}(S_\eta)$.
Letting $\phi \in H^2(D_\eta)$ be a function with
$$\partial_n \phi\big|_{y=\eta} = \udl{\FF}.\nn$$
and $\GG \in H^1(D_\eta)^3$ be a function with
$$\udl{\GG} = {\bf 0}, \qquad \GG\big|_{y=-h}=-(\FF \wedge \jj) \wedge \jj + (\grad \phi \wedge \jj) \wedge \jj\big|_{y=-h},$$
we find that $\HH=\FF-\grad\phi - \GG$ satisfies $\HH \in L^2(D_\eta)^3$, $\curl \HH\in L^2(D_\eta)^3$,
$\div \HH \in L^2(D_\eta)$, $\udl{\HH}.\nn=0$ and $\HH \wedge \jj\big|_{y=-h} = {\bf 0}$.
Proposition \ref{Alternative for X, Y} asserts that
$\HH \in \mathcal{X}_\eta$ and hence $\FF = \HH + \grad \phi + \GG \in H^1(D_\eta)$.

Similarly, suppose 
that $\FF \in L^2(D_\eta)^3$ satisfies $\curl \FF\in L^2(D_\eta)^3$,
$\div \FF \in L^2(D_\eta)$, $\FF\cdot\jj\big|_{y=-h} \in H^\frac{1}{2}({\mathbb R}^2)$ and
$\FF_\parallel^\perp \in H^\frac{1}{2}({\mathbb R}^2)^2$, so that
$\udl{\FF} \wedge \nn \in H^\frac{1}{2}(S_\eta)^3$. Letting $\phi \in H^2(D_\eta)$ be a function with
$$\partial_n \phi\big|_{y=-h} = \FF.\jj\big|_{y=-h}$$
and $\GG \in H^1(D_\eta)^3$ be a function with
$$\GG\big|_{y=-h} = {\bf 0}, \qquad \udl{\GG}=-(\udl{\FF} \wedge \nn) \wedge \nn + (\udl{\grad \phi} \wedge \nn) \wedge \nn,$$
we find that $\HH=\FF-\grad\phi - \GG$ satisfies $\HH \in L^2(D_\eta)^3$, $\curl \HH\in L^2(D_\eta)^3$,
$\div \HH \in L^2(D_\eta)$, $\HH.\jj\big|_{y=-h}=0$ and $\udl{\HH} \wedge \nn = {\bf 0}$,
so that $\HH_\parallel^\perp={\bf 0}$. Proposition \ref{Alternative for X, Y} asserts that
$\HH \in \mathcal{Y}_\eta$ and hence $\FF = \HH + \grad \phi + \GG \in H^1(D_\eta)$.

(ii) This result follows by applying the results in part (i) to the derivatives of $\FF$.
\end{proof}

\begin{lemma} \label{Existence of s solution}
Suppose that $\Phi \in \dot{H}^\frac{3}{2}({\mathbb R}^2)$. Any weak solution $\AA$ of
\eqref{A BVP 1}--\eqref{A BVP 5} is in fact a strong solution.
\end{lemma}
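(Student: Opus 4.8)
The plan is to bootstrap the regularity of a weak solution $\AA$ by two successive applications of the structural characterisations of $H^1(D_\eta)^3$ and $H^2(D_\eta)^3$ given in Propositions \ref{Properties of spaces} and \ref{More about spaces}. By Lemma \ref{Existence of w solution} we already know that $\AA \in {\mathcal X}_\eta$ is solenoidal, satisfies \eqref{A BVP 1} in the sense of distributions and satisfies \eqref{A BVP 5} in $H^{-\frac{1}{2}}({\mathbb R}^2)^2$; it therefore remains only to show that $\AA \in H^2(D_\eta)^3$, after which the promotion of \eqref{A BVP 1} to an identity in $L^2(D_\eta)^3$ and of \eqref{A BVP 5} to an identity in $H^\frac{1}{2}({\mathbb R}^2)^2$ is immediate.

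First I would study the vector field $\vv := \curl \AA$. Since $\div \AA = 0$ one has $\curl \vv = \curl\curl\AA = \alpha\,\curl\AA = \alpha\vv$ in the sense of distributions and $\div \vv = 0$, so that $\vv$, $\curl\vv$ and $\div\vv$ all lie in $L^2(D_\eta)$. For the boundary behaviour of $\vv$: the condition $\AA \wedge \jj|_{y=-h}=\mathbf{0}$ forces the horizontal components of $\AA$ to vanish on $\{y=-h\}$, whence (as the normal component of a curl depends only on the tangential components of the field) $\vv\cdot\jj|_{y=-h}=0$; and at the free surface equation \eqref{A BVP 5} expresses $\vv_\parallel = (\curl\AA)_\parallel$ as $\nabla \Phi - \alpha\nabla^\perp\Delta^{-1}(\nabla\cdot\AA^{\!\perp}_\parallel)$, which lies in $H^\frac{1}{2}({\mathbb R}^2)^2$ because $\Phi \in \dot{H}^\frac{3}{2}({\mathbb R}^2)$ and because $\AA_\parallel \in H^\frac{1}{2}({\mathbb R}^2)^2$ (the trace estimate of Proposition \ref{Properties of spaces}(i), using $\eta \in W^{2,\infty}({\mathbb R}^2)$) together with the continuity of $\Delta^{-1}$ recorded in Section \ref{FA}\ref{FA prerequisites}. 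Hence $\vv\cdot\jj|_{y=-h}\in H^\frac{1}{2}({\mathbb R}^2)$ and $\vv^\perp_\parallel \in H^\frac{1}{2}({\mathbb R}^2)^2$, and Proposition \ref{More about spaces}(i) yields $\vv = \curl\AA \in H^1(D_\eta)^3$.

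Feeding this back into $\AA$ itself, we have $\AA \in H^1(D_\eta)^3$ with $\curl\AA \in H^1(D_\eta)^3$, $\div\AA = 0 \in H^1(D_\eta)$, $\AA \wedge \jj|_{y=-h}=\mathbf{0}$ and $\udl{\AA}\cdot\nn = 0$, so Proposition \ref{More about spaces}(ii) gives $\AA \in H^2(D_\eta)^3$. Then $\curl\curl\AA \in L^2(D_\eta)^3$, so \eqref{A BVP 1} holds in $L^2(D_\eta)^3$; moreover $(\curl\AA)_\parallel \in H^\frac{1}{2}({\mathbb R}^2)^2$ as the trace of $\curl\AA \in H^1(D_\eta)^3$, while the right-hand side of \eqref{A BVP 5} lies in $H^\frac{1}{2}({\mathbb R}^2)^2$ by the estimates already used, so that equality, known in $H^{-\frac{1}{2}}({\mathbb R}^2)^2$, in fact holds in $H^\frac{1}{2}({\mathbb R}^2)^2$. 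Thus $\AA$ is a strong solution.

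The step I expect to be the main obstacle is the first one: establishing that $\curl\AA$ has the boundary regularity required by Proposition \ref{More about spaces}(i) — namely $\vv\cdot\jj|_{y=-h}=0$ and $\vv^\perp_\parallel \in H^\frac{1}{2}({\mathbb R}^2)^2$ — starting only from the $H^1(D_\eta)^3$-information on $\AA$, which entails a careful use of the trace results of Proposition \ref{Properties of spaces} and of the vector identities recorded in Section \ref{sec:introduction}, and a check that the various weak traces they provide are mutually compatible. Once $\curl\AA \in H^1(D_\eta)^3$ is in hand, the remaining steps are routine applications of results already available.
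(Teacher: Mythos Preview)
Your proposal is correct and follows essentially the same route as the paper: first show $(\curl\AA)^\perp_\parallel\in H^{\frac{1}{2}}({\mathbb R}^2)^2$ from \eqref{A BVP 5} and $\Phi\in\dot{H}^{\frac{3}{2}}({\mathbb R}^2)$, combine this with $\div\curl\AA=0$, $\curl\curl\AA=\alpha\,\curl\AA\in L^2(D_\eta)^3$ and $\curl\AA\cdot\jj|_{y=-h}=0$ to obtain $\curl\AA\in H^1(D_\eta)^3$ via Proposition~\ref{More about spaces}(i), and then invoke Proposition~\ref{More about spaces}(ii) to conclude $\AA\in H^2(D_\eta)^3$. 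The step you flag as the potential obstacle is handled in the paper exactly as you describe it, without further ado.
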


\begin{proof}
Recall that $\curl \curl \AA \in L^2(D_\eta)^3$ and
$$
(\curl \AA)^\perp_\parallel =\nabla^\perp \Phi
+\alpha\nabla \Delta^{-1}( \nabla \cdot \AA^{\!\perp}_\parallel)
$$
holds in $H^{-\frac{1}{2}}({\mathbb R}^2)^2$; hence
$(\curl \AA)_\parallel^\perp \in H^{\frac{1}{2}}({\mathbb R}^2)^2$
(because the right-hand side of this equation belongs to $H^{\frac{1}{2}}({\mathbb R}^2)^2$).
Since $0=\div \curl \AA \in L^2(D_\eta)$
and $\curl \AA\cdot\jj|_{y=-h} =0$ it follows that $\curl \AA \in H^1(D_\eta)^3$ (Proposition
\ref{More about spaces}(i)), and furthermore
$\curl \AA \in H^1(D_\eta)^3$, $0=\div \AA \in H^1(D_\eta)$ with
$\AA \wedge \jj |_{y=-h}= \mathbf{0}$, $\udl{\AA}\cdot\nn = 0$ imply that $\AA \in H^2(D_\eta)^3$
(Proposition \ref{More about spaces}(ii)). Finally note that \eqref{A BVP 1} holds in $L^2(D_\eta)^3$ because
it holds in the sense of distributions and $\AA \in H^2(D_\eta)^3$.
\end{proof}

We conclude this section with the following alternative characterisation of a strong solution to
\eqref{A BVP 1}--\eqref{A BVP 5}.

\begin{proposition} \label{Equivalent BVPs}
Suppose that $\Phi \in \dot{H}^\frac{3}{2}({\mathbb R}^2)$.
The strong solutions of the boundary-value problems \eqref{A BVP 1}--\eqref{A BVP 5} and
\begin{align}
& \parbox{7.25cm}{$-\Delta \AA = \alpha\, \curl\AA$}\mbox{in $D_\eta$,} \label{alt BVP 1}\\
& \parbox{7.25cm}{$\AA \wedge \jj = \mathbf{0}$}\mbox{at $y=-h$,} \label{alt BVP 2}\\
& \parbox{7.25cm}{$A_{2y} = 0$}\mbox{at $y=-h$,} \label{alt BVP 3}\\
& \parbox{7.25cm}{$\AA\cdot\nn = 0$}\mbox{at $y=\eta$,} \label{alt BVP 4}\\
& \parbox{7.25cm}{$(\curl \AA)_\parallel = \nabla \Phi - \alpha \nabla^\perp \Delta^{-1} (\nabla\cdot\AA^{\!\perp}_\parallel)
$}\mbox{at $y=\eta$} \label{alt BVP 5}
\end{align}
coincide (so that in particular \eqref{alt BVP 1}--\eqref{alt BVP 5} has a unique strong solution).
\end{proposition}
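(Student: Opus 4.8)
The plan is to establish that the two boundary-value problems have identical strong solutions by verifying the two inclusions separately; the crux is that solenoidality of $\AA$ is automatic for any solution of \eqref{alt BVP 1}--\eqref{alt BVP 5}. The uniqueness assertion then follows by combining this equivalence with Lemmata \ref{Existence of w solution} and \ref{Existence of s solution} (in the small-$|\alpha|$ regime treated there), after noting that a strong solution of \eqref{A BVP 1}--\eqref{A BVP 5} is in particular a weak solution.

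Suppose first that $\AA$ is a strong solution of \eqref{A BVP 1}--\eqref{A BVP 5}. Because $\div\AA=0$, the identity $\curl\curl\AA=\grad\div\AA-\Delta\AA$ converts \eqref{A BVP 1} into $-\Delta\AA=\alpha\,\curl\AA$ in $L^2(D_\eta)^3$, which is \eqref{alt BVP 1}; the conditions \eqref{alt BVP 2}, \eqref{alt BVP 4}, \eqref{alt BVP 5} coincide with \eqref{A BVP 3}, \eqref{A BVP 4}, \eqref{A BVP 5}, so it remains to obtain \eqref{alt BVP 3}. This follows on restricting $\div\AA=0$ to $y=-h$ and using that $\AA\wedge\jj|_{y=-h}=\mathbf{0}$ forces $A_1|_{y=-h}=A_3|_{y=-h}=0$, whence $(\partial_xA_1+\partial_zA_3)|_{y=-h}=0$ and so $A_{2y}|_{y=-h}=0$.

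Conversely, let $\AA$ be a strong solution of \eqref{alt BVP 1}--\eqref{alt BVP 5} and set $w=\div\AA\in H^1(D_\eta)$. Taking the divergence of \eqref{alt BVP 1} shows that $w$ is harmonic, and restricting $\div\AA$ to $y=-h$ and using \eqref{alt BVP 2} and \eqref{alt BVP 3} gives $w|_{y=-h}=0$. The key step is that $w$ also has vanishing normal derivative at $y=\eta$: from $\grad w=\grad\div\AA=\curl\curl\AA-\alpha\,\curl\AA$ (using \eqref{alt BVP 1}) and the identity $\udl{\curl\FF}\cdot\NN=\nabla\cdot\FF^\perp_\parallel$, applied once with $\FF=\curl\AA$ and once with $\FF=\AA$, one obtains
\[
\udl{\grad w}\cdot\NN=\nabla\cdot(\curl\AA)^\perp_\parallel-\alpha\,\nabla\cdot\AA^\perp_\parallel,
\]
and applying $(\,\cdot\,)^\perp$ to \eqref{alt BVP 5} together with $\nabla^{\perp\perp}=-\nabla$, $\nabla\cdot\nabla^\perp\equiv0$ and $\nabla\cdot\nabla\Delta^{-1}(g)=g$ yields $\nabla\cdot(\curl\AA)^\perp_\parallel=\alpha\,\nabla\cdot\AA^\perp_\parallel$, hence $\udl{\grad w}\cdot\NN=0$. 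Thus $w$ is harmonic with $w|_{y=-h}=0$ and $\partial_nw|_{y=\eta}=0$, and a standard energy estimate (alternatively Proposition \ref{BVP proposition 1} with $G=0$) forces $w\equiv0$. With $\div\AA=0$ established, \eqref{alt BVP 1} reduces to \eqref{A BVP 1}, the remaining equations are unchanged, and $\AA$ is seen to be a strong solution of \eqref{A BVP 1}--\eqref{A BVP 5}.

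The main obstacle is the free-surface computation above: it relies on the vector identity for $\udl{\curl\FF}\cdot\NN$ interacting with the precise nonlocal form of \eqref{alt BVP 5} so that the two contributions to $\udl{\grad w}\cdot\NN$ cancel exactly. It must be supported by a trace-regularity check — $\grad w$ and $\curl\curl\AA$ should lie in $H(\div;D_\eta)$ so that their normal traces on $S_\eta$ are well defined in $H^{-\frac12}(\mathbb R^2)$ — and it is here that the hypotheses $\AA\in H^2(D_\eta)^3$ and $\Phi\in\dot H^{\frac32}(\mathbb R^2)$ come into play.
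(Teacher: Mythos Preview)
Your proof is correct, but the converse direction takes a different route from the paper. The paper does not analyse $w=\div\AA$ directly via a mixed Laplace problem; instead it writes $-\Delta\AA=\curl\curl\AA-\grad\div\AA$, multiplies by an arbitrary $\BB\in\mathcal{X}_\eta$, and integrates by parts (using $\div\AA|_{y=-h}=0$ and $\udl{\BB}\cdot\nn=0$) to recover precisely the weak formulation \eqref{Weak solution}. Having shown that $\AA$ is a weak solution of \eqref{A BVP 1}--\eqref{A BVP 5}, it then simply quotes the solenoidality conclusion already established in Lemma \ref{Existence of w solution}. Your argument is more self-contained: the free-surface computation $\udl{\grad w}\cdot\NN=\nabla\cdot(\curl\AA)^\perp_\parallel-\alpha\,\nabla\cdot\AA^\perp_\parallel=0$ extracts the Neumann condition for $w$ directly from \eqref{alt BVP 5}, and then the energy identity (or Proposition \ref{BVP proposition 1} with $G=0$) kills $w$. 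The paper's approach is shorter on the page because the work was done earlier; yours makes the mechanism---that the nonlocal term in \eqref{alt BVP 5} is exactly what is needed to make $\partial_n(\div\AA)$ vanish at the free surface---more transparent, at the cost of a small trace-regularity check (which you correctly flag).
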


\begin{proof}
Suppose that $\AA \in H^2(D_\eta)^3$ is a strong solution of \eqref{A BVP 1}--\eqref{A BVP 5}, so that $\AA$ satisfies
\eqref{alt BVP 1} (due to the identity $\curl \curl \AA = -\Delta \AA + \grad \div \AA$) and
$\div \AA\big|_{y=-h}=0$.
Since $\AA_\mathrm{h}\big|_{y=-h}=0$ and hence $\nabla \cdot \AA_\mathrm{h}\big|_{y=-h}=0$, we conclude that
$\mbox{$A_{2y}\big|_{y=-h} = \div \AA - \nabla \cdot \AA_\mathrm{h}\big|_{y=-h}=0$.}$

The above argument shows that any strong solution $\AA  \in H^2(D_\eta)^3$ of \eqref{alt BVP 1}--\eqref{alt BVP 5} satisfies
$\div \AA\big|_{y=-h}=0$; it remains to show that in fact $\div \AA=0$ in $D_\eta$.
Writing $-\Delta \AA = \curl \curl \AA - \grad \div \AA$, taking the scalar product of \eqref{alt BVP 1} with a
function $\BB \in {\mathcal X}_\eta$, and integrating by parts using the integral identities
\begin{align*}
\int_{D_\eta} \curl \curl \AA\cdot\BB &= \int_{D_\eta} \curl \AA\cdot\curl \BB - \int_{{\mathbb R}^2} (\curl \AA)^{\!\perp}_\parallel\cdot\BB_\parallel, \\
\int_{D_\eta} \grad \div \AA\cdot\BB &= -\int_{D_\eta} \div \AA \div \BB
\end{align*}
(where we have used $\BB \wedge \jj\big|_{y=-h}={\mathbf 0}$,
$\underline{\BB}\cdot\nn=0$ and $\div \AA\big|_{y=-h}=0$), we find that
$\AA$ satisfies \eqref{Weak solution}.
It follows that $\AA$ is a weak
solution of \eqref{A BVP 1}--\eqref{A BVP 5}, so that in particular $\div \AA=0$.
\end{proof}

\subsection{Analyticity of the operator $H(\eta)$} \label{FA Analyticity of GDNO}

In this section we improve the result of Lemmata \ref{Existence of w solution} and \ref{Existence of s solution}
by quantifying the restriction that $|\alpha|$ is small and showing that improved regularity of $\eta$ and $\Phi$
yields improved regularity of $\AA$; we use these results to deduce that $H(\eta)$ depends analytically upon
$\eta$ (see Corollary \ref{H is anal} below for a precise statement of this result). The starting point is the
`flattened' version \eqref{Intro flat 1}--\eqref{Intro flat 5} of the boundary-value problem \eqref{A BVP 1}--\eqref{A BVP 5},
which according to Proposition \ref{Equivalent BVPs} is equivalent to the `flattened' version of the boundary-value problem
\eqref{alt BVP 1}--\eqref{alt BVP 5}, that is
\begin{align}
& \parbox{8.25cm}{$-\Delta {\tilde{\AA}} - \alpha\, \curl{\tilde{\AA}}=\HH^\eta(\tilde{\AA})$}\mbox{in $D_0$,} \label{alt flat 1} \\
& \parbox{8.25cm}{${\tilde{\AA}} \wedge \jj = \mathbf{0}$}\mbox{at $y=-h$,} \label{alt flat 2} \\
& \parbox{8.25cm}{$\tilde{A}_{2y} = 0$}\mbox{at $y=-h$,} \label{alt flat 3} \\
& \parbox{8.25cm}{$\udl{\tilde{\AA}}\cdot\jj = g^\eta(\tilde{\AA})$}\mbox{at $y=0$,} \label{alt flat 4} \\
& \parbox{8.25cm}{$(\udl{\curl {\tilde{\AA}}})_\mathrm{h} + \alpha \nabla^\perp \Delta^{-1} (\nabla\cdot\udl{\tilde{\AA}}^{\!\perp}_\mathrm{h})= \hh^\eta(\tilde{\AA})+\nabla \Phi 
$}\mbox{at $y=0$}, \label{alt flat 5}
\end{align}
where
\begin{align*}
\HH^\eta(\tilde{\AA}) &= \Delta^\eta \tilde{\AA} + \alpha \curl^\eta \tilde{\AA} - \Delta \tilde{\AA} - \alpha \curl \tilde{\AA}, \\
g^\eta(\tilde{A}) &=\nabla \eta \cdot \udl{\tilde{\AA}}_\mathrm{h}, \\
\hh^\eta(\tilde{A}) &= -(\udl{\curl^{\eta}{\tilde{\AA}}})_\mathrm{h}+(\udl{\curl \tilde{\AA}})_\mathrm{h} 
-\nabla \eta (\udl{\curl^{\eta}\tilde{\AA}})_2
-\alpha\nabla^\perp\!\Delta^{-1}(\nabla \cdot(\nabla\eta^\perp \udl{\tilde{A}}_2);
\end{align*}
the definition \eqref{Defn of H} of $H$ is
accordingly replaced by
\begin{equation}
H(\eta)\Phi = \nabla\cdot\udl{\tilde{\AA}}^{\!\perp}_\mathrm{h} + \nabla \cdot(\nabla\eta^\perp \udl{\tilde{A}}_2).
\label{Flattened defn of H}
\end{equation}
(With a slight abuse of notation $\tilde{y}$ has been replaced by $y$ for notational simplicity
and the underscore now denotes evaluation at $y=0$).

The discussion of the boundary-value problem \eqref{alt flat 1}--\eqref{alt flat 5} begins with the corresponding
inhomogeneous linear problem.\vspace{-0.182\baselineskip}

\begin{proposition} \label{Solve the inhomo linear BVP}
Suppose that $s \geq 2$ and $\alpha^\star < \frac{\pi}{2h}$. The boundary-value problem
\begin{align*}
& \parbox{8.25cm}{$-\Delta \tilde{\AA} - \alpha\, \curl{\tilde{\AA}}=\HH$}\mbox{in $D_0$,} \\
& \parbox{8.25cm}{${\tilde{\AA}} \wedge \jj = \mathbf{0}$}\mbox{at $y=-h$,} \\
& \parbox{8.25cm}{$\tilde{A}_{2y} = 0$}\mbox{at $y=-h$,} \\
& \parbox{8.25cm}{$\udl{\tilde{\AA}}\cdot\jj = g$}\mbox{at $y=0$,} \\
& \parbox{8.25cm}{$(\udl{\curl {\tilde{\AA}}})_\mathrm{h} + \alpha \nabla^\perp \Delta^{-1} (\nabla\cdot\udl{\tilde{\AA}}^{\!\perp}_\mathrm{h})= \hh$}\mbox{at $y=0$}
\end{align*}
has a unique solution $\tilde{\AA} \in H^s(D_0)^3$ for each $g \in H^{s-\frac{1}{2}}({\mathbb R}^2)$ and
$\HH \in H^{s-2}(D_0)^3$, $\hh \in  H^{s-\frac{3}{2}}({\mathbb R}^2)^2$. The solution operator defines a
mapping $H^{s-\frac{1}{2}}({\mathbb R}^2) \times H^{s-2}(D_0)^3 \times H^{s-\frac{3}{2}}({\mathbb R}^2)^2
\rightarrow H^s(D_0)^3$ which is bounded uniformly over $|\alpha| \in [0,\alpha^\star]$.
\end{proposition}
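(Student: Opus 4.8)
The plan is to exploit the fact that $D_0=\{-h<y<0\}$ is a flat slab: taking the Fourier transform in the horizontal variables $(x,z)$ turns the boundary-value problem into a family, indexed by the horizontal frequency $\kk$, of two-point boundary-value problems for linear ordinary differential equations on $(-h,0)$, which can be solved essentially explicitly. (An alternative route, closer to Lemmata \ref{Existence of w solution} and \ref{Existence of s solution}, combines a Lax--Milgram argument in $H^1(D_0)^3$ with an elliptic-regularity bootstrap; I indicate below how it dovetails with the Fourier picture.) As a preliminary reduction I would dispose of the datum $g$: choosing a continuous lift $\tilde{\AA}^{(0)}\in H^s(D_0)^3$ with $\tilde{\AA}^{(0)}\wedge\jj|_{y=-h}=\mathbf 0$, $\tilde{A}^{(0)}_{2y}|_{y=-h}=0$ and $\tilde{A}^{(0)}_2|_{y=0}=g$ (which exists by Proposition \ref{Properties of spaces}(i) and an obvious refinement respecting the bottom conditions), the field $\tilde{\AA}-\tilde{\AA}^{(0)}$ solves the same problem with $g$ replaced by $0$ and with $\HH$, $\hh$ modified by terms bounded in the requisite norms by $\|g\|_{s-\frac12}$. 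It therefore suffices to treat $g=0$, where the relevant fields lie in the closed subspace of $H^s(D_0)^3$ determined by $\tilde{\AA}\wedge\jj|_{y=-h}=\mathbf 0$, $\tilde{A}_{2y}|_{y=-h}=0$, $\tilde{A}_2|_{y=0}=0$.

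Passing to the partial Fourier transform $\hat{\tilde{\AA}}(\kk,y)$ and noting that the nonlocal operators $\nabla^\perp\Delta^{-1}(\nabla\cdot(\,\cdot\,)^\perp)$ appearing in the equation and the boundary condition are bounded Fourier multipliers (Proposition \ref{Inverse Laplacian proposition}), the problem decouples over $\kk$ into, for each $\kk\neq\mathbf 0$, the constant-coefficient system $-\hat{\tilde{\AA}}_{yy}+|\kk|^2\hat{\tilde{\AA}}-\alpha\,\widehat{\curl\tilde{\AA}}=\hat{\HH}$ on $(-h,0)$, together with three boundary conditions at each endpoint. Its characteristic exponents are $\pm|\kk|$ (gradient/harmonic part) and $\pm\sqrt{|\kk|^2-\alpha^2}$ (Beltrami part, oscillatory when $|\kk|<|\alpha|$), so a basis of homogeneous solutions is explicit, a particular solution is furnished by variation of parameters, and imposing the six boundary conditions reduces the fibre problem to a $6\times6$ linear system with coefficient matrix $M(\kk,\alpha)$ depending real-analytically on its entries. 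The fibre problem is uniquely solvable precisely when $\det M(\kk,\alpha)\neq0$, and the solution is then given by Cramer's rule in terms of $\hat{g}$, $\hat{\HH}$, $\hat{\hh}$.

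The heart of the matter is to show that, after the natural $\kk$-dependent normalisation, $|\det M(\kk,\alpha)|$ is bounded below uniformly over all $\kk\neq\mathbf 0$ and all $|\alpha|\le\alpha^\star$. For $|\kk|\ge|\alpha|$ all exponents are real, $M(\kk,\alpha)$ is a bounded perturbation of the invertible coefficient matrix of the corresponding problem for $-\Delta$, and the determinant is built from $\cosh(|\kk|h)$, $\sinh(|\kk|h)$ and the like, which cause no trouble; as $|\kk|$ decreases past $|\alpha|$ the Beltrami exponents become imaginary, $\pm\i\sqrt{\alpha^2-|\kk|^2}$, and $\det M(\kk,\alpha)$ acquires a factor proportional to $\cos(\sqrt{\alpha^2-|\kk|^2}\,h)$ (and no further zero is possible). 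Since $\sqrt{\alpha^2-|\kk|^2}\le|\alpha|\le\alpha^\star$ and $\alpha^\star h<\tfrac{\pi}{2}$, this factor is bounded below by $\cos(\alpha^\star h)>0$ -- this is where the hypothesis is used, and it is sharp (at $|\alpha|=\tfrac{\pi}{2h}$ a low-frequency resonance appears, signalled by the oscillatory trivial flow $\uu^\star\propto(\cos\alpha y,0,\sin\alpha y)$). With this uniform lower bound, a routine estimate of the entries of $M^{-1}(\kk,\alpha)$ and of the variation-of-parameters kernel -- whose $y$-derivatives decay like $\e^{-|\kk|\,|y-y'|}$ -- shows that the solution operator maps $H^{s-\frac12}(\mathbb R^2)\times H^{s-2}(D_0)^3\times H^{s-\frac32}(\mathbb R^2)^2$ into $H^s(D_0)^3$ with norm bounded uniformly in $|\alpha|\in[0,\alpha^\star]$, the two-derivative gain over $\HH$ and the half-integer loss at the boundary being precisely those of the flat Laplacian. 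Equivalently, one may run the argument of \S\ref{FA for BVP}: the condition $|\alpha|<\tfrac{\pi}{2h}$ is exactly what makes the bilinear form on the left of \eqref{Weak solution} coercive on the above subspace, because the indefinite term $-\alpha\int_{D_0}\curl\tilde{\AA}\cdot\tilde{\AA}$ and the nonlocal boundary term are controlled via the optimal Poincar\'e inequality $\|\tilde{\AA}\|_{L^2(D_0)}\le\tfrac{2h}{\pi}\|\partial_y\tilde{\AA}\|_{L^2(D_0)}$ on the slab (valid since $\tilde{\AA}_\mathrm h$ and $\tilde{A}_{2y}$ vanish at $y=-h$) together with a div--curl estimate as in Proposition \ref{Alternative for X, Y}; Lax--Milgram then yields an $H^1$ solution, uniformly bounded for $|\alpha|\le\alpha^\star$, which is upgraded to $H^s$ by writing the equation as $-\Delta\tilde{\AA}=\HH+\alpha\curl\tilde{\AA}$, invoking elliptic regularity for the slab Laplacian with the present mixed, order-zero-pseudodifferential boundary conditions, and absorbing $\alpha\curl\tilde{\AA}$ via the interpolation inequality $\|\tilde{\AA}\|_{H^{s-1}}\le\varepsilon\|\tilde{\AA}\|_{H^s}+C_\varepsilon\|\tilde{\AA}\|_{L^2}$ with $\varepsilon$ fixed uniformly over $|\alpha|\le\alpha^\star$.

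The main obstacle is the sharp threshold: proving that $\det M(\kk,\alpha)$ (respectively the coercivity constant) stays uniformly bounded away from zero over all frequencies and all $|\alpha|\le\alpha^\star<\tfrac{\pi}{2h}$, the only possible source of degeneracy being the low-frequency factor $\cos(\sqrt{\alpha^2-|\kk|^2}\,h)$. Carrying the nonlocal boundary term $\alpha\nabla^\perp\Delta^{-1}(\nabla\cdot\udl{\tilde{\AA}}^{\!\perp}_\mathrm h)$ faithfully through both the determinant computation and the verification that the $H^s$ bound is genuinely $\alpha$-uniform is the remaining point demanding care.
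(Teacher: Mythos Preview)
Your primary route---Fourier transform in $(x,z)$, explicit solution of the resulting constant-coefficient ODE system, identification of the critical low-frequency factor $\cos\big(\sqrt{\alpha^2-|\kk|^2}\,h\big)$, and kernel estimates of the form $|G|\lesssim|\kk|^{-1}\e^{-|\kk||y-\zeta|}$---is exactly the paper's approach; the paper simply carries it out in full, writing the Green's matrix as $G(y,\zeta)=W(y)C\overline{U(\zeta+h)}^{\mathrm T}$ (and its counterpart for $y<\zeta$) with explicit formulae for $U$, $W$, $C$ relegated to an appendix, and the condition $\alpha^\star h<\tfrac{\pi}{2}$ enters precisely where you say, to keep the entry $s=\sec\big(\sqrt{\alpha^2-|\kk|^2}\,h\big)$ of $C$ finite. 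A few small points of difference: the paper treats $g$ directly in the Green's representation rather than lifting it away; the root $\pm|\kk|$ is double (the appendix formulae involve $y\cosh|\kk|y$, $y\sinh|\kk|y$), so your $6\times6$ system needs generalised eigenvectors; and the $H^s$ bound is obtained by proving it first for integer $s$ by induction---bootstrapping $\partial_y$-regularity from the equation $\partial_y^2\tilde{\AA}=-\partial_x^2\tilde{\AA}-\partial_z^2\tilde{\AA}-\alpha\curl\tilde{\AA}-\HH$---and then interpolating. Your alternative Lax--Milgram/Poincar\'e argument is not in the paper; it is a reasonable heuristic, but the assertion that the slab Poincar\'e constant $2h/\pi$ delivers the \emph{sharp} coercivity threshold for the full bilinear form (with its nonlocal boundary term) is not obvious and would itself require a spectral computation essentially equivalent to the Green's-matrix analysis.
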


\begin{proof}
The solution to this boundary-value problem is
$$\tilde{\mathbf A} = {\mathcal F}^{-1}\left[\int_{-h}^0 G(y,\zeta) \hat{\mathbf H}(\zeta) \dzeta
-G(y,0)\begin{pmatrix}\hat{h}_1-\i k_1 \hat{g} \\ 0 \\ \hat{h}_3-\i k_3 \hat{g} \end{pmatrix}
-G_\zeta(y,0)\begin{pmatrix} 0 \\ \hat{g} \\ 0 \end{pmatrix}\right],$$
where $\hat{u}=\mathcal{F}[u]$ denotes the Fourier transform of $u$ with respect to $(x,z)$ (with independent variable $\kk=(k_1,k_3)$) and
the Green's matrix $G$ is given by
$$G(y,\zeta) = \left\{\begin{array}{cc}U(y+h)\overline{C}^\mathrm{T}\overline{W(\zeta)}^\mathrm{T}, & -h \leq y \leq \zeta \leq 0,\\
W(y)C\overline{U(\zeta+h)}^\mathrm{T}, & -h \leq \zeta \leq y \leq 0;
\end{array}
\right.$$
explicit formulae for the matrices $U(y)$, $C$ and $W(y)$ are stated in Appendix \ref{Green's}.
We show by induction that the formulae 
$${\mathcal G}_1(\HH) = {\mathcal F}^{-1}\left[\int_{-h}^0 G(y,\zeta) \hat{\mathbf H}(\zeta) \dzeta\right],$$
$$
{\mathcal G}_2(\hh) = {\mathcal F}^{-1}\left[G(y,0)\hat{\mathbf h}(\zeta) \dzeta\right],
\qquad
{\mathcal G}_3(\hh) = {\mathcal F}^{-1}\left[G_\zeta(y,0)\hat{\mathbf h}(\zeta) \dzeta\right]$$
define linear mappings ${\mathcal G}_1: H^m(D_0)^3 \rightarrow H^{m+2}(D_0)^3$,
${\mathcal G}_2: H^{m+\frac{1}{2}}({\mathbb R}^2)^3 \rightarrow H^{m+2}(D_0)^3$,
${\mathcal G}_3: H^{m+\frac{3}{2}}({\mathbb R}^2)^3 \rightarrow H^{m+2}(D_0)^3$ which are bounded
uniformly over $|\alpha| \in [0,\alpha^\star]$ for $m=0,1,2,\ldots$
and hence for $m \in [0,\infty)$ by interpolation.

The estimates given in Appendix \ref{Green's} (\eqref{Appendix est 1}--\eqref{Appendix est 5}) imply
the existence of $R>0$ such that
$$|G(y,\zeta)|,\ |G_y(y,\zeta)| \lesssim 1$$
for $|\kk| \leq R$ and
$$|G(y,\zeta)| \lesssim \frac{1}{|\kk|}\e^{-|\kk||y-\zeta|}, \qquad |G_y(y,\zeta)| \lesssim \e^{-|\kk||y-\zeta|}$$
for $|\kk| \geq R$, uniformly over $y, \zeta \in [-h,0]$ and $|\alpha| \in [0,\alpha^\star]$.
Using these inequalities, one finds by straightforward calculations that
$$\|{\mathcal G}_1(\HH)\|_1,\|\partial_x{\mathcal G}_1(\HH)\|_1,
\|\partial_z{\mathcal G}_1(\HH)\|_1 \lesssim \|\HH\|_0,$$
and it follows from the equation
\begin{equation}
\partial_y^2 {\mathcal G}_1(\HH) = -\partial_x^2 {\mathcal G}_1(\HH) - \partial_z^2 {\mathcal G}_1(\HH) - \alpha \curl {\mathcal G}_1(\HH) - \HH \label{Green interior} \\
\end{equation}
that
$$\|\partial_{yy}{\mathcal G}_1(\HH)\|_0 \lesssim \|\HH\|_0.$$
This argument establishes the result for ${\mathcal G}_1$ for $m=0$.

Suppose that $\|{\mathcal G}_1(\HH)\|_{m+2} \lesssim \|\HH\|_s$ for some $m \in {\mathbb N}_0$,
so that
$$\|{\mathcal G}_1(\HH)\|_{m+2} \lesssim \|\HH\|_m \lesssim \|\HH\|_{m+1}$$
and obviously
\begin{align*}
& \| \partial_x {\mathcal G}_1(\HH)\|_{m+2} \lesssim \|{\mathcal G}_1(\partial_x \HH)\|_{m+2}
\lesssim \|\partial_x \HH\|_s \lesssim \|\HH\|_{m+1}, \\&
\| \partial_z {\mathcal G}_1(\HH)\|_{m+2} \lesssim \|{\mathcal G}_1(\partial_z \HH)\|_{m+2}
\lesssim \|\partial_z \HH\|_s \lesssim \|\HH\|_{m+1}.
\end{align*}
Furthermore, it follows from \eqref{Green interior} that
\begin{align*}
\|\partial_y^{m+3} {\mathcal G}_1(\HH) \|_0
& \lesssim 
\||D|\partial_y^{m+1} {\mathcal G}_1(|D|\HH)\|_0
+ \|\partial_y^{m+1}{\mathcal G}_1(\HH) \|_1
+\|\partial_y^{m+1} \HH\|_0 \\
& \lesssim \|{\mathcal G}_1(|D|\HH)\|_{m+2}+\|{\mathcal G}_1(\HH) \|_{m+2}
+\|\partial_y^{m+1} \HH\|_0 \\
& \lesssim \||D|\HH\|_{m} + \|\HH\|_{m} + \|\HH\|_{m+1}\\
& \lesssim \|\HH\|_{m+1},
\end{align*}
where $|D|={\mathcal F}^{-1}[|\kk| {\mathcal F}[\cdot]]$. One concludes that $\|{\mathcal G}_1(\HH)\|_{m+3} \lesssim \|\HH\|_{m+1}$.

\enlargethispage{0.2cm}The estimates for ${\mathcal G}_2$ and ${\mathcal G}_3$ are obtained in a similar fashion.
\end{proof}\pagebreak

In view of the previous result we henceforth fix $\alpha^\star < \frac{\pi}{2h}$ and assume
that $|\alpha| \in [0,\alpha^\star]$; all results hold uniformly over these values of $\alpha$.

\begin{theorem}
Suppose that $s \geq 2$.
There exists an open neighbourhood $V$ of the origin in $H^{s+\frac{1}{2}}({\mathbb R}^2)$ such that
the boundary-value problem \eqref{alt flat 1}--\eqref{alt flat 5}
has a unique solution $\tilde{\AA}=\tilde{\AA}(\eta,\Phi)$ in $H^s(D_0)^3$ for each $\eta \in V$ and $\Phi \in \dot{H}^{s-\frac{1}{2}}({\mathbb R})$.
Furthermore $\tilde{\AA}(\eta,\Phi)$ depends analytically upon $\eta$ and $\Phi$ (and linearly upon $\Phi$).
\end{theorem}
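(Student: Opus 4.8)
The plan is to recast the flattened boundary-value problem \eqref{alt flat 1}--\eqref{alt flat 5} as a linear fixed-point equation for $\tilde{\AA}\in H^s(D_0)^3$ and to invert it by a Neumann series. Write $\mathcal{S}$ for the solution operator of the inhomogeneous linear problem of Proposition \ref{Solve the inhomo linear BVP}, so that $\mathcal{S}$ is the bounded linear map $H^{s-2}(D_0)^3\times H^{s-\frac12}({\mathbb R}^2)\times H^{s-\frac32}({\mathbb R}^2)^2\to H^s(D_0)^3$ (bounded uniformly over $|\alpha|\in[0,\alpha^\star]$) sending a triple of right-hand sides to the corresponding unique solution. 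For a fixed $\eta$ the corrections $\HH^\eta(\tilde{\AA})$, $g^\eta(\tilde{\AA})$, $\hh^\eta(\tilde{\AA})$ appearing in \eqref{alt flat 1}, \eqref{alt flat 4}, \eqref{alt flat 5} are linear in $\tilde{\AA}$, and $\Phi$ enters only through the linear term $\nabla\Phi$ in \eqref{alt flat 5}; using the uniqueness asserted in Proposition \ref{Solve the inhomo linear BVP}, the system \eqref{alt flat 1}--\eqref{alt flat 5} is therefore equivalent to
\[
\tilde{\AA}=M_\eta\tilde{\AA}+N_\eta\Phi,
\]
where $M_\eta\tilde{\AA}:=\mathcal{S}(\HH^\eta(\tilde{\AA}),g^\eta(\tilde{\AA}),\hh^\eta(\tilde{\AA}))$ and $N_\eta\Phi:=\mathcal{S}(0,0,\nabla\Phi)$ are bounded linear operators on $H^s(D_0)^3$ and from $\dot H^{s-\frac12}({\mathbb R}^2)$ to $H^s(D_0)^3$ respectively.

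The crux is to show that $\|M_\eta\|_{\mathcal{L}(H^s(D_0)^3)}\leq C\|\eta\|_{s+\frac12}$ on a neighbourhood of $\eta=0$. Inspecting the explicit formulae for $\curl^\eta\tilde{\FF}$, $\div^\eta\tilde{\FF}$ and $\Delta^\eta\tilde f$, one sees that every term of $\HH^\eta(\tilde{\AA})=(\Delta^\eta-\Delta)\tilde{\AA}+\alpha(\curl^\eta-\curl)\tilde{\AA}$, of $g^\eta(\tilde{\AA})$ and of $\hh^\eta(\tilde{\AA})$ is a product of: a factor drawn from $\{\eta_x,\eta_z,\eta_{xx}+\eta_{zz},\eta_x^2+\eta_z^2,K_2^\eta(K_2^\eta-2)\}$, each of which vanishes at $\eta=0$ and is controlled, with a factor of $\|\eta\|_{s+\frac12}$ extracted, in $H^{s-\frac12}({\mathbb R}^2)$ or $H^{s-\frac32}({\mathbb R}^2)$; a smooth bounded function on $D_0$ times a power of $K_3^\eta=1/(h+\eta)$, which for $\|\eta\|_{L^\infty}<h$ lies in $H^{s+\frac12}({\mathbb R}^2)$ and acts as a multiplier (here $H^{s+\frac12}({\mathbb R}^2)$ being a Banach algebra, valid since $s+\tfrac12>1$, is used); and a derivative of $\tilde{\AA}$ of order at most two, or a trace thereof onto $\{y=0\}$. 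Combining the trace theorem, the mapping properties of $\Delta^{-1}(\nabla\cdot\,\cdot\,)$ and $\nabla^\perp\Delta^{-1}(\nabla\cdot\,\cdot\,)$ recorded in Section \ref{FA}\ref{FA prerequisites}, Sobolev embeddings, Hölder's inequality and the usual product rules for Sobolev spaces (which apply since $s\geq2$) gives
\[
\|\HH^\eta(\tilde{\AA})\|_{H^{s-2}(D_0)^3}+\|g^\eta(\tilde{\AA})\|_{s-\frac12}+\|\hh^\eta(\tilde{\AA})\|_{s-\frac32}\lesssim\|\eta\|_{s+\frac12}\,\|\tilde{\AA}\|_{H^s(D_0)^3},
\]
and composing with $\mathcal{S}$ yields the claimed bound on $\|M_\eta\|$, uniformly over $|\alpha|\in[0,\alpha^\star]$.

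Now choose $V$ to be an open neighbourhood of the origin in $H^{s+\frac12}({\mathbb R}^2)$ so small that $\|\eta\|_{L^\infty}<h$ and $\|M_\eta\|\leq\tfrac12$ for every $\eta\in V$. Then $\mathrm{Id}-M_\eta$ is boundedly invertible, $\tilde{\AA}(\eta,\Phi):=(\mathrm{Id}-M_\eta)^{-1}N_\eta\Phi$ is the unique solution of \eqref{alt flat 1}--\eqref{alt flat 5} in $H^s(D_0)^3$, and it is visibly linear in $\Phi$. For the analytic dependence on $\eta$ note that $\eta\mapsto h+\eta$ is affine and the inversion map is analytic on the open set of invertible elements of the Banach algebra $H^{s+\frac12}({\mathbb R}^2)$, so $\eta\mapsto K_3^\eta$, and hence $\eta\mapsto K_1^\eta,K_2^\eta$, is analytic into $H^{s+\frac12}({\mathbb R}^2)$; since multiplication is bilinear and bounded, hence analytic, and the remaining ingredients of $M_\eta$ and $N_\eta$ (differential operators, traces, the Fourier-multiplier operators of $\mathcal{S}$, and $\nabla^\perp\Delta^{-1}(\nabla\cdot\,\cdot\,)$) are fixed bounded linear maps, $\eta\mapsto M_\eta$ and $\eta\mapsto N_\eta$ are analytic into the respective operator spaces. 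As $A\mapsto(\mathrm{Id}-A)^{-1}$ is analytic on $\{\|A\|<1\}$, composing analytic maps shows $\eta\mapsto\tilde{\AA}(\eta,\Phi)$ is analytic, uniformly over $|\alpha|\in[0,\alpha^\star]$. (Alternatively one may apply the analytic implicit function theorem to $\mathcal{N}(\eta,\Phi,\tilde{\AA})=\tilde{\AA}-M_\eta\tilde{\AA}-N_\eta\Phi$ at $(0,0,0)$, where $D_{\tilde{\AA}}\mathcal{N}(0,0,0)=\mathrm{Id}$.)

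I expect the principal obstacle to be the family of Sobolev estimates in the second paragraph: one must verify that each correction term lands in exactly the space demanded by Proposition \ref{Solve the inhomo linear BVP} — in particular that the two derivatives lost in $\HH^\eta$ are compensated by the two-derivative gain of $\mathcal{S}$ — with a genuine factor of $\|\eta\|_{s+\frac12}$, so that $M_\eta$ is a contraction for small $\eta$; at the endpoint $s=2$ the abstract product rule is no longer available for the roughest terms, e.g. $(\eta_{xx}+\eta_{zz})\,\partial_y\tilde{\AA}$, and must be replaced by Hölder's inequality together with the embeddings $H^{1/2}({\mathbb R}^2)\hookrightarrow L^4$ and $H^1(D_0)\hookrightarrow L^6(D_0)$. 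The only other delicate point, the passage from smoothness to analyticity of $\eta\mapsto 1/(h+\eta)$, is immediate from the Banach-algebra structure already in force.
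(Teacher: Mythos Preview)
Your argument is correct and is essentially the same as the paper's: both rest on Proposition~\ref{Solve the inhomo linear BVP} for the linearised problem at $\eta=0$, the same product estimates for the correction terms $\HH^\eta$, $g^\eta$, $\hh^\eta$, and the linearity in $\Phi$. The paper applies the analytic implicit-function theorem directly to the map ${\mathcal H}(\tilde{\AA},\eta,\Phi)$ into $H^{s-2}(D_0)^3\times H^{s-\frac12}({\mathbb R}^2)\times H^{s-\frac32}({\mathbb R}^2)^2$, whereas you first compose with the solution operator $\mathcal S$ and then invert $\mathrm{Id}-M_\eta$ by a Neumann series; since the problem is linear in $\tilde{\AA}$ these are the same construction, and you yourself note the IFT alternative.
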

\begin{proof}
Using the estimates
\begin{align*}
\|f_1(\eta)\Delta \eta \tilde{f}\|_{H^{s-2}(D_0)} & \lesssim \|f(\eta)\|_s \|\Delta \eta\|_{s-\frac{3}{2}}\|\tilde{f}\|_{H^{s-1}(D_0)}, \\
\|f_2(\eta,\nabla \eta)\tilde{f}\|_{H^{s-2}(D_0)} & \lesssim \|f(\eta,\nabla \eta)\|_{s-\frac{1}{2}}\|\tilde{f}\|_{H^{s-2}(D_0)}, \\
\|f_2(\eta,\nabla \eta)\tilde{f}\|_{H^{s-1}(D_0)} & \lesssim \|f(\eta,\nabla \eta)\|_{s-\frac{1}{2}}\|\tilde{f}\|_{H^{s-1}(D_0)}, \\
\|\nabla \eta \udl{\tilde f}\|_{s-\frac{1}{2}} & \lesssim \|\nabla \eta\|_{s-\frac{1}{2}}\|\tilde{f}\|_{H^s(D_0)}, \\
\|\nabla \eta \udl{\tilde f}\|_{s-\frac{3}{2}} & \lesssim \|\nabla \eta\|_{s-\frac{1}{2}}\|\tilde{f}\|_{H^{s-1}(D_0)},
\end{align*}
where $f_1$ and $f_2$ are analytic mappings of a neighbourhood of the origin in respectively
${\mathbb R}$ and ${\mathbb R}^3$ into ${\mathbb R}$,
one finds that the mappings $(\eta,\tilde{\AA}) \mapsto g^\eta(\tilde{\AA})$,
$(\eta,\tilde{\AA}) \mapsto \HH^\eta(\tilde{\AA})$, $(\eta,\tilde{\AA}) \mapsto \hh^\eta(\tilde{\AA})$,
 are analytic in $V \times H^s(D_0)^3$,
where $V$ is an open neighbourhood of the origin in $H^{s+\frac{1}{2}}({\mathbb R}^2)$,
their target spaces being respectively $H^{s-\frac{1}{2}}({\mathbb R}^2)$, $H^{s-2}(D_0)^3$ and $H^{s-\frac{3}{2}}({\mathbb R}^2)^2$. (The above estimates are obtained from standard embedding theorems --- see in particular H\"{o}rmander
\cite[Theorem 8.3.1]{Hoermander}, noting that this theorem remains true when ${\mathbb R}^3$ is replaced by $D_0$).
It follows that the formula
$${\mathcal H}(\tilde{\AA},\eta,\Phi)=\begin{pmatrix} -\Delta {\tilde{\AA}} - \alpha\, \curl{\tilde{\AA}}-\HH^\eta(\tilde{\AA}) \\
\udl{\tilde{\AA}}\cdot\jj - g^\eta(\tilde{\AA}) \\
(\udl{\curl {\tilde{\AA}}})_\mathrm{h} + \alpha \nabla^\perp \Delta^{-1} (\nabla\cdot\udl{\tilde{\AA}}^{\!\perp}_\mathrm{h}) - \hh^\eta(\tilde{\AA})-\nabla \Phi 
\end{pmatrix},$$
defines an analytic mapping
$${\mathcal H}: S \times V \times \dot{H}^{s-\frac{1}{2}}({\mathbb R}) \rightarrow H^{s-2}(D_0)^3 \times H^{s-\frac{1}{2}}({\mathbb R}^2) \times
H^{s-\frac{3}{2}}({\mathbb R}^2)^2,$$
where $S=\{\tilde{\AA} \in H^s(D_0)^3: \tilde{\AA} \wedge \jj\big|_{y=-h}={\mathbf 0}, \tilde{A}_{2y}=0\}$.

Furthermore, ${\mathcal H}({\mathbf 0},0,0)=({\mathbf 0},0,{\mathbf 0})$, and the calculation
$$\mathrm{d}_1{\mathcal H}[{\mathbf 0},0,0](\tilde{\AA})=\begin{pmatrix}
\curl \curl {\tilde{\AA}} - \alpha\, \curl{\tilde{\AA}} \\
\udl{\tilde{\AA}}\cdot\jj \\
(\udl{\curl {\tilde{\AA}}})_\mathrm{h} + \alpha \nabla^\perp \Delta^{-1} (\nabla\cdot\udl{\tilde{\AA}}^{\!\perp}_\mathrm{h})
\end{pmatrix}$$
and Proposition \ref{Solve the inhomo linear BVP} show that
$$\mathrm{d}_1{\mathcal H}[{\mathbf 0},0,0]: S \times H^{s+\frac{1}{2}}({\mathbb R}^2) \times \dot{H}^{s-\frac{1}{2}}({\mathbb R}) \rightarrow H^{s-2}(D_0)^3 \times H^{s-\frac{1}{2}}({\mathbb R}^2) \times
H^{s-\frac{3}{2}}({\mathbb R}^2)^2$$
is an isomorphism.
The analytic implicit-function theorem (Buffoni \& Toland \cite[Theorem 4.5.3]{BuffoniToland})
asserts the existence of an open neighbourhoods $W$ and $U$ of the origin in respectively
$H^{s+\frac{1}{2}}({\mathbb R}^2) \times \dot{H}^{s-\frac{1}{2}}({\mathbb R})$ and
$H^s(D_0)^3$ such that
the equation
$${\mathcal H}(\tilde{\AA},\eta,\Phi)=({\mathbf 0},0,{\mathbf 0})$$
and hence the boundary-value problem \eqref{alt flat 1}--\eqref{alt flat 5}
has a unique solution $\tilde{\AA}_0=\tilde{\AA}_0(\eta,\Phi)$ in $U$ for each $(\eta,\Phi) \in W$; furthermore
$\tilde{\AA}_0(\eta,\Phi)$ depends
analytically upon $\eta$ and $\Phi$. Since $\tilde{\AA}_0$ depends linearly upon $\Phi$ one can without loss of generality take
$W=V \times \dot{H}^{s-\frac{1}{2}}({\mathbb R})$, and clearly $U=H^s(D_0)^3$
(with $\Phi=0$ the construction yields a unique solution in a neighbourhood of the origin in $H^s(D_0)^3$,
which is evidently the zero solution).
\end{proof}

\begin{corollary} Suppose that $\eta \in H^{m+\frac{1}{2}}({\mathbb R}^2) \cap W^{m,\infty}({\mathbb R}^2)$
for some $m \in \{2,3,\ldots\}$. The strong solution $\AA$ to \eqref{alt BVP 1}--\eqref{alt BVP 5} lies in $H^m(D_0)$.
\end{corollary}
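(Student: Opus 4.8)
The plan is to prove the result by induction on $m$, reading it as an elliptic regularity assertion up to the boundary for the solenoidal vector potential $\AA$ (under the tacit assumption that $\Phi \in \dot{H}^{m-\frac{1}{2}}({\mathbb R}^2)$ --- the regularity of $\Phi$ that the boundary condition \eqref{alt BVP 5} forces upon $\AA$ --- and that $\inf\eta>-h$). The base case $m=2$ is Lemma \ref{Existence of s solution}. For the inductive step I would assume that $\AA \in H^k(D_\eta)^3$ for some $k$ with $2\le k\le m-1$ and show that $\AA \in H^{k+1}(D_\eta)^3$; since $\eta \in W^{m,\infty}({\mathbb R}^2)$ with $\inf\eta>-h$, the flattening $\tilde{y}=h(y-\eta)/(h+\eta)$ is a $W^{m,\infty}$-diffeomorphism of $D_0$ onto $D_\eta$, so it suffices to show that the associated $\tilde{\AA}$ --- which by Proposition \ref{Equivalent BVPs} solves the flattened problem \eqref{alt flat 1}--\eqref{alt flat 5} --- improves from $H^k(D_0)^3$ to $H^{k+1}(D_0)^3$.

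The key observation is that this flattened problem is a \emph{homogeneous} second-order elliptic system, namely $-\Delta^\eta\tilde{\AA} - \alpha\,\curl^\eta\tilde{\AA} = \mathbf{0}$ in $D_0$ (equivalently \eqref{alt flat 1}, since $\HH^\eta(\tilde{\AA})=(\Delta^\eta-\Delta)\tilde{\AA}+\alpha(\curl^\eta-\curl)\tilde{\AA}$), supplemented by $\tilde{\AA}\wedge\jj=\mathbf{0}$ and $\tilde{A}_{2y}=0$ at $\tilde{y}=-h$ and, at $\tilde{y}=0$, by the two conditions $\udl{\tilde{A}}_2-\nabla\eta\cdot\udl{\tilde{\AA}}_\mathrm{h}=0$ and $(\udl{\curl^\eta\tilde{\AA}})_\mathrm{h}+\alpha\nabla^\perp\Delta^{-1}(\nabla\cdot\udl{\tilde{\AA}}^{\!\perp}_\mathrm{h})-\hh^\eta(\tilde{\AA})=\nabla\Phi$ --- of orders $0$ and $1$ respectively, the $\eta$-dependent quantities $g^\eta(\tilde{\AA})$ and $\hh^\eta(\tilde{\AA})$ being regarded as parts of the boundary operators rather than as data, so that $\nabla\Phi$ is the sole inhomogeneity. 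The principal-part coefficients of $\Delta^\eta$ are analytic functions of $\eta$ and $\nabla\eta$ and hence lie in $W^{m-1,\infty}({\mathbb R}^2)\cap H^{m-\frac{1}{2}}({\mathbb R}^2)$, as do the coefficients of the boundary operators; the lower-order coefficients of $\Delta^\eta$ involve in addition $\nabla^2\eta$ and lie in $W^{m-2,\infty}({\mathbb R}^2)\cap H^{m-\frac{3}{2}}({\mathbb R}^2)$; and the nonlocal contribution $\alpha\nabla^\perp\Delta^{-1}(\nabla\cdot\udl{\tilde{\AA}}^{\!\perp}_\mathrm{h})$ is of order zero in $\udl{\tilde{\AA}}$ by the mapping properties of $\Delta^{-1}$ recorded in Section \ref{FA}\ref{FA prerequisites}, hence of lower order than $(\udl{\curl^\eta\tilde{\AA}})_\mathrm{h}$. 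I would verify that these boundary operators satisfy the Lopatinskii--Shapiro (complementing) condition relative to $-\Delta-\alpha\,\curl$: for $\eta=0$ this is contained in the explicit Green's matrix of Proposition \ref{Solve the inhomo linear BVP}, and it is preserved by the diffeomorphism. The associated up-to-the-boundary elliptic estimate --- obtained either by citing the standard theory for second-order systems with Lipschitz-scale coefficients satisfying the complementing condition, or, more in keeping with the treatment here, by a partition of unity into patches on which $\nabla\eta$ differs from a constant vector $\mathbf{a}$ by at most a prescribed small amount (so that $\Delta^\eta$ is a genuinely small perturbation of the constant-coefficient operator $\Delta^{\mathbf{a}}$, itself a linear shear of $\Delta$), followed by a Neumann series built on Proposition \ref{Solve the inhomo linear BVP} --- then yields the implication $\tilde{\AA}\in H^k(D_0)^3\Rightarrow\tilde{\AA}\in H^{k+1}(D_0)^3$, valid as long as $k+1\le m$. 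Iterating from $k=2$ to $k=m-1$ completes the induction.

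The remaining work is the bookkeeping of the Sobolev-multiplier estimates --- of exactly the kind appearing in the proof of the preceding theorem (products of analytic functions of $\eta$, $\nabla\eta$, $\nabla^2\eta$ against derivatives of $\tilde{\AA}$ and their traces) --- needed to confirm that at each stage the single boundary datum $\nabla\Phi$ lies in $H^{k-\frac{1}{2}}({\mathbb R}^2)^2$ (which holds since $\Phi\in\dot{H}^{m-\frac{1}{2}}({\mathbb R}^2)$ and $k\le m-1$) and that the coefficients of $\Delta^\eta$, $\curl^\eta$ and of the boundary operators carry the $W^{k,\infty}$- and Sobolev-regularity demanded by the $H^{k+1}$-estimate, so that the gain of regularity does not stall before level $m$. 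Both hypotheses on $\eta$ enter here: $\eta\in W^{m,\infty}({\mathbb R}^2)$ supplies the $W^{m-1,\infty}$-control of the principal coefficients, while $\eta\in H^{m+\frac{1}{2}}({\mathbb R}^2)$ --- together with the algebra property of $H^{m-\frac{1}{2}}({\mathbb R}^2)$ --- controls the boundary-operator coefficients and the various products in the Sobolev scale. I expect the main obstacle to be the verification of the complementing condition (equivalently, the organisation of the partition-of-unity/Neumann-series upgrade of Proposition \ref{Solve the inhomo linear BVP} to variable coefficients) together with the correct treatment of the nonlocal boundary term; the attendant product estimates are routine given the stated hypotheses.
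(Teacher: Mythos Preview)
Your proposal takes a genuinely different and far more laborious route than the paper. In the paper this statement is placed as an immediate \emph{corollary} of the preceding Theorem, and no separate proof is given: that Theorem already asserts that for each $s\ge 2$ the flattened boundary-value problem \eqref{alt flat 1}--\eqref{alt flat 5} has a unique solution $\tilde{\AA}\in H^s(D_0)^3$ whenever $\eta$ lies in a suitable neighbourhood of the origin in $H^{s+\frac{1}{2}}({\mathbb R}^2)$ and $\Phi\in\dot{H}^{s-\frac{1}{2}}({\mathbb R}^2)$. Taking $s=m$ yields the conclusion at once. The entire regularity machinery has thus already been packaged into the implicit-function-theorem argument (together with Proposition \ref{Solve the inhomo linear BVP}, which supplies the linear isomorphism), so no induction, no Lopatinskii--Shapiro verification, and no partition-of-unity localisation is required at this point.

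Your scheme --- bootstrap from $H^k$ to $H^{k+1}$ via up-to-the-boundary elliptic estimates for a system satisfying the complementing condition --- is the standard alternative and is plausible in outline, but it duplicates work the paper has already done in disguise: your proposed ``Neumann series built on Proposition \ref{Solve the inhomo linear BVP}'' is precisely the mechanism underlying the implicit-function step in the Theorem. The additional ingredients you flag as obstacles (verification of the complementing condition with the nonlocal boundary term, and the multiplier bookkeeping) are real work that the paper avoids by absorbing all $\eta$-dependence into the analytic map $\mathcal{H}$ and invoking the isomorphism at $\eta=0$. What your approach would buy, if carried through, is a result that does not rely on the smallness of $\eta$ in $H^{m+\frac{1}{2}}$ implicit in the Theorem's neighbourhood $V$; the paper's corollary, read literally, inherits that smallness restriction.
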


\begin{corollary} \label{H is anal}
Suppose that $s \geq 2$. There exists an open neighbourhood $V$ of the origin in $H^{s+\frac{1}{2}}({\mathbb R}^2)$ such that the formula
\eqref{Flattened defn of H} defines an analytic mapping
$H(\cdot): V \rightarrow {\mathcal L}(\dot{H}^{s-\frac{1}{2}}({\mathbb R}),H^{s-\frac{3}{2}}({\mathbb R}))$.
\end{corollary}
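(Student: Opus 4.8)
The plan is to read the statement off from the preceding theorem, the analytic machinery having been set up there; what remains is a short chain of functional-analytic reductions. Write $\tilde{\AA}(\eta,\Phi)$ for the solution of \eqref{alt flat 1}--\eqref{alt flat 5} provided by that theorem, so that (possibly after shrinking $V$) the map $\eta \mapsto \tilde{\AA}(\eta,\cdot)$ is analytic from $V \subset H^{s+\frac{1}{2}}(\mathbb{R}^2)$ into $H^s(D_0)^3$ and $\tilde{\AA}(\eta,\cdot)$ is linear in $\Phi$. First I would compose with the trace map on the slab $D_0$, namely $\tilde{\FF} \mapsto \udl{\tilde{\FF}}=\tilde{\FF}|_{y=0}$, which is bounded $H^s(D_0)^3 \to H^{s-\frac{1}{2}}(\mathbb{R}^2)^3$ since $s \geq 2$. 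This gives that $\eta \mapsto \big[\Phi \mapsto \udl{\tilde{\AA}(\eta,\Phi)}\big]$ is an analytic \emph{operator-valued} map $V \to \mathcal{L}\big(\dot{H}^{s-\frac{1}{2}}(\mathbb{R}^2),H^{s-\frac{1}{2}}(\mathbb{R}^2)^3\big)$. The one point here that needs more than a line is the passage from a map of the pair $(\eta,\Phi)$ that is jointly analytic and linear in $\Phi$ to an analytic operator-valued map of $\eta$ alone; I would justify this by collecting the homogeneous terms of the Taylor expansion of $\tilde{\AA}(\eta,\cdot)$ about a point of $V$ — each such term being multilinear in the increment of $\eta$ and linear in $\Phi$, with the convergence estimates inherited from the joint expansion — or, equivalently, by invoking the standard fact that a locally bounded, weakly analytic map into a space of bounded operators is analytic.

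It then remains to post-compose with the two operations appearing in \eqref{Flattened defn of H}. The map $\FF \mapsto \nabla\cdot\FF_{\mathrm h}^\perp$ is bounded linear $H^{s-\frac{1}{2}}(\mathbb{R}^2)^3 \to H^{s-\frac{3}{2}}(\mathbb{R}^2)$, so $\eta \mapsto \big[\Phi \mapsto \nabla\cdot\udl{\tilde{\AA}(\eta,\Phi)}_{\mathrm h}^\perp\big]$ is analytic into $\mathcal{L}\big(\dot{H}^{s-\frac{1}{2}}(\mathbb{R}^2),H^{s-\frac{3}{2}}(\mathbb{R}^2)\big)$. For the second term, $\eta \mapsto \big[g \mapsto \nabla\cdot(\nabla\eta^\perp g)\big]$ is linear in $\eta$, hence trivially analytic, from $H^{s+\frac{1}{2}}(\mathbb{R}^2)$ into $\mathcal{L}\big(H^{s-\frac{1}{2}}(\mathbb{R}^2),H^{s-\frac{3}{2}}(\mathbb{R}^2)\big)$; the boundedness needed here is the product estimate $\|\nabla\eta^\perp g\|_{s-\frac{1}{2}} \lesssim \|\nabla\eta\|_{s-\frac{1}{2}}\|g\|_{s-\frac{1}{2}}$, valid because $H^{s-\frac{1}{2}}(\mathbb{R}^2)$ is a Banach algebra for $s-\tfrac{1}{2}>1$ (which holds as $s \geq 2$), followed by $\nabla\cdot:H^{s-\frac{1}{2}}(\mathbb{R}^2)^2 \to H^{s-\frac{3}{2}}(\mathbb{R}^2)$. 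Composing this with the second-component part $\eta \mapsto \big[\Phi \mapsto \udl{\tilde{\AA}(\eta,\Phi)}_2\big]$ of the analytic map constructed in the first step, through the bounded bilinear operator-composition map $(M,C)\mapsto M\circ C$ (bounded bilinear, hence analytic) on the relevant pair of operator spaces, shows that $\eta \mapsto \big[\Phi \mapsto \nabla\cdot(\nabla\eta^\perp\,\udl{\tilde{\AA}(\eta,\Phi)}_2)\big]$ is also analytic into $\mathcal{L}\big(\dot{H}^{s-\frac{1}{2}}(\mathbb{R}^2),H^{s-\frac{3}{2}}(\mathbb{R}^2)\big)$.

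Adding the two analytic maps yields that $H(\cdot):V \to \mathcal{L}\big(\dot{H}^{s-\frac{1}{2}}(\mathbb{R}^2),H^{s-\frac{3}{2}}(\mathbb{R}^2)\big)$ is analytic, which is the assertion. The argument is bookkeeping once the theorem is in hand; the step I expect to cost the most — and the one I would write out carefully — is the operator-valued analyticity invoked in the first paragraph, every remaining ingredient being a one-line boundedness estimate, a trivial linearity, or the elementary fact that operator composition is bounded bilinear.
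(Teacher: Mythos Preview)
Your proposal is correct and is precisely the argument the paper has in mind: the corollary is stated without proof immediately after the theorem on the analyticity of $\tilde{\AA}(\eta,\Phi)$, and your write-up is simply a careful unpacking of the composition with the trace and the two terms of \eqref{Flattened defn of H}, together with the passage from joint analyticity (linear in $\Phi$) to analyticity of the operator-valued map. There is nothing to add or correct; the step you flag as the most delicate---the operator-valued analyticity---is indeed the only point requiring a word of justification, and your sketch of it is adequate.
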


\appendix
\section{The Green's matrix}\label{Green's}

The entries of the matrices $U(y)=(u_{mn}(y))$ and $W(y)=(w_{mn}(y))$ are
\begin{align*}
u_{11}(y) &= \frac{\i k_1}{|\kk|}\sinh |\kk|y, \\
u_{12}(y) &= -\frac{k_1}{\alpha}[c(y)-\cosh |\kk|y]+k_3\left[s_1(y)-\frac{\sinh |\kk|y}{|\kk|}\right]+\frac{k_3}{|\kk|}\sinh |\kk|y, \\
u_{13}(y) &= -\frac{\i k_3}{\alpha}[c(y)-\cosh |\kk|y]-\frac{\i k_1}{2|\kk|}\sinh |\kk|y
+ \frac{\i k_1}{\alpha^2}\left[ s_2(y)-|\kk|\sinh |\kk|y + \frac{\alpha^2}{2}y \cosh |\kk|y\right], \\
u_{21}(y) & = \cosh |\kk|y, \\
u_{22}(y) &= \frac{\i |\kk|^2}{\alpha}\left[s_1(y)-\frac{\sinh |\kk|y}{|\kk|}\right], \\
u_{23}(y) & = \frac{|\kk|^2}{\alpha^2}\left[c(y) - \cosh |\kk|y + \frac{\alpha^2}{2|\kk|}y \sinh |\kk|y \right], \\
u_{31}(y) & = \frac{\i k_3}{|\kk|}\sinh |\kk|y, \\
u_{32}(y) & = -\frac{k_3}{\alpha}[c(y)-\cosh |\kk|y] - k_1\left[s_1(y)-\frac{\sinh |\kk|y}{|\kk|}\right] + \frac{k_1}{|\kk|}\sinh |\kk|y, \\
u_{33}(y) &= \frac{\i k_1}{\alpha}[c(y)-\cosh |\kk|y]-\frac{\i k_3}{2|\kk|}\sinh |\kk|y
+ \frac{\i k_3}{\alpha^2}\left[ s_2(y)-|\kk|\sinh |\kk|y + \frac{\alpha^2}{2}y \cosh |\kk|y\right]
\end{align*}
and
\begin{align*}
w_{11}(y) &= \frac{\i k_1}{|\kk|}\cosh |\kk|y, \\
w_{12}(y) & = -\frac{\i k_3}{|\kk|}[c(y)-\cosh |\kk|y] - \frac{\i k_3}{|\kk|}\cosh |\kk|y + \frac{\i k_1}{\alpha |\kk|}[s_2(y)-|\kk|\sinh |\kk|y], \\
w_{13}(y) &= \frac{k_1}{2|\kk|}\cosh |\kk|y + \frac{|\kk| k_3}{\alpha}\left[s_1(y)-\frac{\sinh |\kk|y}{|\kk|}\right]
-\frac{|\kk| k_1}{\alpha^2}\left[c(y)-\cosh |\kk|y + \frac{\alpha^2}{2|\kk|}y\sinh |\kk|y\right], \\
w_{21}(y) &= \sinh |\kk|y, \\
w_{22}(y) &= \frac{|\kk|}{\alpha}[c(y)-\cosh |\kk|y], \\
w_{23}(y) &= \frac{\i |\kk|^3}{\alpha^2}\left[s_1(y)-\frac{\sinh |\kk|y}{|\kk|}+\frac{\alpha^2}{2|\kk|^2}y\cosh |\kk|y\right], \\
w_{31}(y) &= \frac{\i k_3}{|\kk|}\cosh |\kk|y, \\
w_{32}(y) &= \frac{\i k_1}{|\kk|}[c(y)-\cosh |\kk|y] + \frac{\i k_1}{|\kk|}\cosh |\kk|y + \frac{\i k_3}{\alpha |\kk|}[s_2(y)-|\kk|\sinh |\kk|y], \\
w_{33}(y) &= \frac{k_3}{2|\kk|}\cosh |\kk|y - \frac{|\kk| k_1}{\alpha}\left[s_1(y)-\frac{\sinh |\kk|y}{|\kk|}\right]
-\frac{|\kk| k_3}{\alpha^2}\left[c(y)-\cosh |\kk|y + \frac{\alpha^2}{2|\kk|}y\sinh |\kk|y\right],
\end{align*}
where
$$c(y) = \left\{\begin{array}{ll}\cos (\alpha^2-|\kk|^2)^{\frac{1}{2}} y, & |\kk| \leq \alpha, \\[2mm]
\cosh (|\kk|^2-\alpha^2)^{\frac{1}{2}} y, & |\kk| \geq \alpha, 
\end{array}\right.$$
$$
s_1(y) = \left\{\begin{array}{ll}\dfrac{\sin (\alpha^2-|\kk|^2)^{\frac{1}{2}}y}{(\alpha^2-|\kk|^2)^{\frac{1}{2}}}, & |\kk| \leq \alpha, \\[2mm]
\dfrac{\sinh (|\kk|^2-\alpha^2)^{\frac{1}{2}}y}{(|\kk|^2-\alpha^2)^{\frac{1}{2}}}, & |\kk| \geq \alpha,
\end{array}\right.\qquad
s_2(y) = (|\kk|^2-\alpha^2)s_1(y).
$$

Fix $\alpha^\star>0$. Straightforward estimates show that
\begin{align*}
|\partial_y^i\big(c(y)-\cosh |\kk|y\big)| & \lesssim \alpha |\kk|^{i-1}\e^{|\kk||y|}\\
\left|\partial_y^i\left(c(y) - \cosh |\kk|y + \frac{\alpha^2}{2|\kk|}y \sinh |\kk|y\right)\right| & \lesssim \alpha^3 |\kk|^{i-2}\e^{|\kk||y|}\\
\left|\partial_y^i\left(s_1(y)-\frac{\sinh |\kk|y}{|\kk|}\right)\right| & \lesssim \alpha |\kk|^{i-2}\e^{|\kk||y|} \\
\left|\partial_y^i\left(s_1(y)-\frac{\sinh |\kk|y}{|\kk|}+\frac{\alpha^2}{2|\kk|^2}y\cosh |\kk|y\right)\right| & \lesssim \alpha^3|\kk|^{i-3}\e^{|\kk||y|} \\
|\partial_y^i\big(s_2(y)-|\kk|\sinh |\kk|y\big)| & \lesssim \alpha |\kk|^{i}\e^{|\kk||y|}\\
\left|\partial_y^i\left(s_2(y)-|\kk|\sinh |\kk|y + \frac{\alpha^2}{2}y \cosh |\kk|y\right)\right| &\lesssim \alpha^3 |\kk|^{i-1}\e^{|\kk||y|}, \qquad i=0,1
\end{align*}
(uniformly over $|y| \in [0,h]$ and $|\kk| \geq \max(1,\sqrt{2}\alpha)$); it follows that
\begin{equation}
|\partial_y^i u_{mn}(y)|, |\partial_y^i w_{mn}(y)| \lesssim |\kk|^i \e^{|\kk||y|}, \qquad i=0,1 \label{Appendix est 1}
\end{equation}
(uniformly over $|\alpha| \in [0,\alpha^\star]$, $|y| \in [0,h]$ and $|\kk| \geq \max(1,\sqrt{2}\alpha)$).
Noting that
$$c(y) \rightarrow \cos\alpha y, \qquad s_1(y) \rightarrow  \frac{\sin\alpha y}{\alpha}, \qquad s_2(y) \rightarrow -\alpha \sin \alpha y$$
and
$$\cosh |\kk| y \rightarrow 1, \qquad \frac{\sinh |\kk|y}{|\kk|} \rightarrow y, \qquad |\kk| \sinh |\kk| \rightarrow 0$$
as $|\kk| \rightarrow 0$ (uniformly over $|y| \in [0,h]$),
we find that
\begin{equation}
\frac{1}{|\kk|}|u_{mn}(y)| \lesssim 1, \qquad |w_{mn}(y)| \lesssim 1\label{Appendix est 2}
\end{equation}
as $|\kk| \rightarrow 0$ (uniformly over $|y| \in [0,h]$ and $|\alpha| \in [0,\alpha^\star]$). Similarly, since $c_y(y)=s_2(y)$,
$s_{1y}(y) = c(y)$ and $s_{2y}(y) = (k^2-a^2)c(y)$, we find that
\begin{equation}
|\partial_y u_{mn}(y)| \lesssim 1, \qquad |\partial_y w_{mn}(y)| \lesssim 1 \label{Appendix est 3}
\end{equation}
as $|\kk| \rightarrow 0$ (uniformly over $|y| \in [0,h]$ and $|\alpha| \in [0,\alpha^\star]$).

The entries of the matrix $C=(c_{mn})$ are
\begin{align*}
c_{11} &= -\frac{1}{2|\kk|}\sech |\kk|h + \frac{|\kk|}{\alpha^2}\left[s-\sech |\kk|h +\frac{h\alpha^2}{2|\kk|}\sech |\kk|h \tanh |\kk|h\right]\\
& \qquad\mbox{}-\frac{2|\kk|}{\alpha^2}\tanh^2 |\kk|h [s-\sech |\kk|h]  \\
& \qquad\mbox{}+\frac{2}{\alpha^2}\sech |\kk|h \tanh |\kk|h [t_2-|\kk|\tanh |\kk|h]  \\
& \qquad\mbox{}+\sech |\kk|h \tanh |\kk|h \left[t_1-\frac{\tanh |\kk|h}{|\kk|}\right] +\frac{1}{|\kk|}\sech |\kk|h \tanh^2 |\kk|h, \\
c_{12} &= -\frac{\i}{\alpha} \tanh |\kk|h [s-\sech |\kk|h] +\frac{\i}{\alpha |\kk|} \sech |\kk|h [t_2-|\kk|\tanh |\kk|h],  \\
c_{13} &=- \frac{1}{|\kk|}\sech |\kk|h, \\
c_{21} &= \frac{1}{\alpha} \tanh |\kk|h [s-\sech |\kk|h] +\frac{|\kk|}{\alpha} \sech |\kk|h \left[t_1-\frac{\tanh |\kk|h}{|\kk|}\right], \\
c_{22} & = \frac{\i}{|\kk|}[s-\sech |\kk|h] + \frac{\i}{|\kk|}\sech |\kk|h, \\
c_{23} & = 0, \\
c_{31} & = \frac{\i}{|\kk|}\sech |\kk|h, \\
c_{32} & = 0, \\
c_{33} & = 0,
\end{align*}
where
$$s = \left\{\begin{array}{ll}\sec (\alpha^2-|\kk|^2)^{\frac{1}{2}} h, & |\kk| \leq \alpha, \\[2mm]
\sech (|\kk|^2-\alpha^2)^{\frac{1}{2}} h, & |\kk| \geq \alpha, 
\end{array}\right.$$
$$
t_1 = \left\{\begin{array}{ll}\dfrac{\tan (\alpha^2-|\kk|^2)^{\frac{1}{2}}h}{(\alpha^2-|\kk|^2)^{\frac{1}{2}}}, & |\kk| \leq \alpha, \\[2mm]
\dfrac{\tanh (|\kk|^2-\alpha^2)^{\frac{1}{2}}h}{(|\kk|^2-\alpha^2)^{\frac{1}{2}}}, & |\kk| \geq \alpha,
\end{array}\right.\qquad
t_2 = (|\kk|^2-\alpha^2)t_1;
$$
the necessity that $s$ is finite is met by choosing $\alpha^\star h < \frac{\pi}{2}$, so that
$(\alpha^2-|\kk|^2)^\frac{1}{2}h<\frac{\pi}{2}$ for all $|\kk| \leq \alpha$.

One finds that
\begin{align*}
|s -\sech |\kk|h| & \lesssim \frac{\alpha^2}{|\kk|}\sech |\kk|h, \\
\left|s - \sech |\kk|h + \frac{\alpha^2h}{2|\kk|}\sech |\kk|h \tanh |\kk|h\right| & \lesssim \frac{\alpha^4}{|\kk|^2}\sech |\kk|h, \\
\left| t_1 - \frac{\tanh |\kk|h}{|\kk|} \right| & \lesssim \frac{\alpha^2}{|\kk|^2}, \\
|t_2 - |\kk|\tanh |\kk|h| & \lesssim \alpha^2
\end{align*}
(uniformly over $|\kk| \geq \max(1,\sqrt{2}\alpha)$), and hence
\begin{equation}
|c_{mn}| \lesssim \frac{1}{|\kk|}\sech |\kk|h \label{Appendix est 4}
\end{equation}
uniformly over $|\alpha| \in [0,\alpha^\star]$ and $|\kk| \geq \max(1,\sqrt{2}\alpha)$.
Noting that
$$s \rightarrow \sec \alpha h, \qquad t \rightarrow \frac{\tan \alpha h}{\alpha}$$
and
$$\sech |\kk| h \rightarrow 1, \qquad \frac{\tanh |\kk|h}{|\kk|} \rightarrow h$$
as $|\kk| \rightarrow 0$, we find that
$$|\kk| C \rightarrow \begin{pmatrix} - \frac{1}{2} & - \i \tan \alpha h & -1 \\ 0 & \i \sec \alpha h & 0 \\ \i & 0 & 0 \end{pmatrix}$$
componentwise as $|\kk| \rightarrow 0$; it follows that
\begin{equation}
|\kk|c_{mn} \lesssim 1 \label{Appendix est 5}
\end{equation}
as $|\kk| \rightarrow 0$
(uniformly over $|\alpha| \in [0,\alpha^\star]$).\pagebreak

\dataccess{This paper has no additional data.}\vspace{-0.5\baselineskip}
\aucontribute{The paper was written by M.D.G., and all calculations were checked by J.H.}\vspace{-0.5\baselineskip}
\competing{We declare we have no competing interests.}\vspace{-0.5\baselineskip}
\funding{No external funding was involved in this work.}

\end{document}